\numberwithin{equation}{section}
\newtheorem{lemma}[equation]{Lemma}
\newtheorem{thm}[equation]{Theorem}
\newtheorem{conjecture}[equation]{Conjecture}
\newtheorem{cor}[equation]{Corollary}
\newtheorem{prop}[equation]{Proposition}
\newtheorem{fact}[equation]{Fact}
\newtheorem{claim}[equation]{Claim}
\newtheorem*{lemA}{Lemma A}
\theoremstyle{remark}
\newtheorem*{Remarkapp}{Remark}
\newtheorem{remark}[equation]{Remark}
\newtheorem*{notation}{Notation}
\theoremstyle{definition}
\newtheorem*{defi}{Definition}
\newtheorem{example}[equation]{Example}
\renewcommand{\bar}[1]{#1\llap{$\overline{\phantom{\rm#1}}$}}
\DeclareMathOperator{\Gal}{{Gal}}
\DeclareMathOperator{\lcm}{{lcm}}
\DeclareMathOperator{\Mon}{{Mon}}
\newcommand{\C}{{\mathbb C}}
\newcommand{\Z}{{\mathbb Z}}
\newcommand{\F}{{\mathbb F}}
\newcommand{\Aff}{{\mathbb A}}
\newcommand{\tth}{^{\operatorname{th}}}
\newcommand{\cA}{{\mathcal A}}
\newcommand{\cB}{{\mathcal B}}
\newcommand{\cU}{{\mathcal U}}
\newcommand{\cUk}{{\cU}^k}
\newcommand{\cV}{{\mathcal V}}
\newcommand{\cW}{{\mathcal W}}
\newcommand{\cZ}{{\mathcal Z}}
\newcommand{\cJ}{{\mathcal J}}
\newcommand{\LR}{{\mathcal {LR}}}
\newcommand{\LL}{{\mathcal {LL}}}
\newcommand{\RR}{{\mathcal {RR}}}
\newcommand{\RL}{{\mathcal {RL}}}
\newcommand{\iter}[1]{^{\langle #1\rangle}}
\newcommand{\Line}{{\mathbb P}^1}
\newcommand{\gen}[1]{\mathopen{<}#1\mathclose{>}}
\newcommand{\abs}[1]{\lvert#1\rvert}
\newcommand{\col}{\,{:}\,}
\begin{document}



\title[Polynomial decomposition]{On Ritt's polynomial decomposition
theorems}

\author{Michael E. Zieve}
\address{
  Department of Mathematics,
  Rutgers University,
  Piscataway, NJ 08854, USA
}
\email{zieve@math.rutgers.edu}
\urladdr{www.math.rutgers.edu/$\sim$zieve/}

\author{Peter M\"uller}
\address{
  Institut f\"ur Mathematik,
  Universit\"at W\"urzburg,
  Am Hubland,
  D-97074 \\ W\"urzburg, Germany
}
\email{Peter.Mueller@mathematik.uni-wuerzburg.de}
\urladdr{www.mathematik.uni-wuerzburg.de/$\sim$mueller}

\begin{abstract}
  Ritt studied the functional decomposition of a univariate
  complex polynomial $f$ into prime (indecomposable) polynomials,
  $f=u_1\circ u_2\circ\dots\circ u_r$.  His main achievement was a procedure
  for obtaining any decomposition of $f$ from any other by repeatedly applying
  certain transformations.  However, Ritt's results provide no control on the
  number of times one must apply the basic transformations, which makes his
  procedure unsuitable for many theoretical and algorithmic applications.
  We solve this problem by giving a new description of the collection of all
  decompositions of a polynomial.  One
  consequence is as follows: if $f$ has degree $n>1$ but $f$ is not conjugate
  by a linear polynomial to either $X^n$ or $\pm T_n$ (with $T_n$ the Chebychev
  polynomial), and if the composition $a\circ b$ of polynomials $a,b$ is the
  $k\tth$ iterate of $f$ for some $k>\log_2(n+2)$, then either
  $a=f\circ c$ or $b=c\circ f$ for some polynomial $c$.
  This result has been used by Ghioca, Tucker and Zieve to describe the
  polynomials $f,g$ having orbits with infinite intersection; our results
  have also been used by Medevedev and Scanlon to describe the affine curves
  invariant under a coordinatewise polynomial action.
  Ritt also proved that the sequence
  $(\deg(u_1),\dots,\deg(u_r))$ is uniquely determined by $f$, up to
  permutation.  We show that in fact, up to permutation, the sequence of
  permutation groups $(G(u_1),\dots,G(u_r))$ is uniquely determined by $f$,
  where $G(u)=\Gal(u(X)-t,\C(t))$.  This generalizes both Ritt's invariant and
  an invariant discovered by Beardon and Ng, which turns out to be equivalent
  to the subsequence of cyclic groups among the $G(u_i)$.
\end{abstract}

\date{\today}

\maketitle


\section{Introduction}

Around 1920, Fatou, Julia and Ritt made profound investigations of
functional equations.  For instance, each of them wrote at length on
commuting polynomials, namely $f,g\in\C[X]$ such that $f\circ g = g \circ f$.
This is a particular instance of the general functional equation
$F=f_1\circ \dots \circ f_r = g_1 \circ \dots \circ g_s$
with $f_i,g_j\in\C[X]\setminus \C$, which Ritt studied intensively \cite{Ritt}.
Ritt's strategy was to write each nonlinear $f_i$ and $g_j$
as a composition of minimal-degree nonlinear polynomials, thereby obtaining two
expressions of $F$ as a composition of such `prime' polynomials.
This led him to study the extent of nonuniqueness of the `prime factorization'
of a polynomial under the operation of composition.

The `primes' under this operation are the \emph{indecomposable} polynomials,
namely those $u\in\C[X]$ with $\deg(u)>1$ which cannot be written as the
composition of polynomials of strictly lower degrees.
Given $f\in\C[X]$ with $\deg(f)>1$,
a \emph{complete decomposition} of $f$ is a finite sequence $(u_1,\dots,u_r)$
of indecomposable polynomials $u_i\in\C[X]$
such that $f=u_1\circ\dots\circ u_r$.  Clearly such a complete decomposition
always exists if $\deg(f)>1$; however, it need not be unique.

Ritt gave a procedure for obtaining all complete decompositions of $f$ from a
single such decomposition.  Specifically, he showed that any complete
decomposition of $f$ can be obtained from any other via a finite sequence of
steps, each of which involves replacing two adjacent indecomposables by two
others which have the same composition.
He then solved the equation $a\circ b = c\circ d$ in indecomposable
$a,b,c,d\in\C[X]$.  Up to composing with
linears, the only solutions are the trivial $a\circ b = a\circ b$ and the
nontrivial
\begin{align}
  \label{introeqritt}
  X^n\circ X^s h(X^n) &= X^s h(X)^n\circ X^n\\
  \label{introeqcheb}
  T_n\circ T_m &= T_m\circ T_n,
\end{align}
where $h\in\C[X]$ and $n,s,m$ are positive integers.
The polynomial $T_n$ in \eqref{introeqcheb} is the Chebychev
polynomial, whose definition and basic properties are recalled in
Section~\ref{sec:various}.
We may view \eqref{introeqritt} as the least common
generalization of the fact that $X^n\circ X^s = X^s\circ X^n$ and the fact that
the square of an odd polynomial is even.

Ritt's results are analogous to the classical result in knot theory
that any two knot diagrams belonging to the same knot can be obtained from
one another by a sequence of certain basic transformations known as
Reidemeister moves.  Since in general there is no known bound on the number of
Reidemeister moves required, this result does not resolve the problem of
determining whether two knot diagrams belong to the same knot.
However, the result has been useful in
the study of invariants of knots, since any quantity which is unchanged by
Reidemeister moves is necessarily a knot invariant.  Likewise, Ritt's
reults do not yield any bound on the number of Ritt moves required to pass
between two complete decompositions.  On the other hand, Ritt's results can be
used to determine decomposition invariants.  For instance, by inspecting the
solutions of $a\circ b = c\circ d$ in indecomposable $a,b,c,d\in\C[X]$, we
see that the degrees of $a$ and $b$ are the same as those of $c$ and $d$,
although possibly in reversed order.  It follows from Ritt's procedure that
the sequence of degrees of the indecomposables in a complete decomposition
of $f$ is uniquely determined (up to permutation) by $f$.
Beardon and Ng \cite{BN} used the same method to exhibit another invariant:
given any complete decomposition $f=u_1\circ\dots\circ u_r$, they showed that
the sequence $(\#\Gamma_0(u_1),\dots,\#\Gamma_0(u_r))$ is uniquely determined
(up to permutation) by $f$, where $\Gamma_0(u)$ is the set of linear
$\ell\in\C[X]$ such that $u\circ\ell=u$.  Our first result presents a new
invariant which simultaneously generalizes Ritt's degree invariant and the
Beardon--Ng invariant.

\begin{defi}
  For $u\in\C[X]\setminus\C$, the \emph{monodromy group} $\Mon(u)$ is the
  Galois group of $u(X)-t$ over $\C(t)$, viewed as a group of permutations of
  the roots of $u(X)-t$.
\end{defi}

\begin{thm} \label{monRitt1}
  Pick $f\in\C[X]$ with $\deg(f)>1$. Let $(u_1,\dots, u_r)$ and
  $(v_1,\dots, v_s)$ be two complete decompositions of $f$. Then
  $r=s$, and there is a permutation $\chi$ of the set $\{1,2,\dots,r\}$
  such that $\Mon(u_i)$ and $\Mon(v_{\chi(i)})$ are isomorphic
  permutation groups for all $1\le i\le r$.
\end{thm}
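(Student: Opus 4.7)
The plan is to use Ritt's theorem (stated in the excerpt) to reduce to a check for each basic Ritt move. That theorem supplies, alongside the degree invariance that gives $r = s$, a chain of moves joining the two given decompositions; up to linear conjugation, each move is either trivial, a Chebychev swap \eqref{introeqcheb}, or a monomial swap \eqref{introeqritt}. Linear conjugation does not alter the monodromy of an indecomposable polynomial as a permutation group, since $\Mon(u \circ \ell) \cong \Mon(\ell \circ u) \cong \Mon(u)$ for any linear $\ell$. So the theorem reduces to checking that each of the three move-types preserves the unordered pair of monodromy groups (as permutation groups) of the two indecomposables being swapped. The trivial case is immediate; the Chebychev case is immediate because the two indecomposables are merely interchanged. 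Thus the content lies entirely in the monomial swap.

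For the monomial swap, set $a(X) := X^s h(X^n)$ and $b(X) := X^s h(X)^n$; both sides of the swap contain $X^n$ as one factor, so the claim reduces to showing $\Mon(a) \cong \Mon(b)$ as permutation groups. Indecomposability of $a$ forces $\gcd(s,n) = 1$, for otherwise $a$ would factor through $X^{\gcd(s,n)}$. The plan is to analyze the common composition $f = X^n \circ a = b \circ X^n$ inside the Galois closure $\tilde L$ of $\C(X)/\C(T)$ (where $T = f(X)$), with $G = \Mon(f) = \Gal(\tilde L/\C(T))$ acting on the set $\Omega_f$ of roots of $f(Y) - T$. The two factorizations of $f$ yield two block systems on $\Omega_f$: $n$ blocks of size $d := s + n\deg h$ indexed by $\Omega_{X^n}$ from $f = X^n \circ a$, and $d$ blocks of size $n$ indexed by $\Omega_b$ from $f = b \circ X^n$. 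The coprimality $\gcd(s,n) = 1$ ensures these block systems are orthogonal: the map $Y \mapsto (a(Y), Y^n)$ is a $G$-equivariant bijection $\Omega_f \to \Omega_{X^n} \times \Omega_b$, embedding $G$ as a subdirect product of $C_n \times \Mon(b)$.

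The kernel $K_A := \ker(G \to C_n)$ equals the setwise stabilizer of any block in the first system (since $C_n$ acts regularly on $\Omega_{X^n}$), and the standard inner-monodromy theorem for $f = X^n \circ a$ identifies its action on that block with $\Mon(a)$. The orthogonality bijection identifies such a block with $\Omega_b$, and under this identification the action of $K_A$ on $\Omega_b$ coincides with the one induced by the quotient $G \to \Mon(b)$; we thus obtain a canonical injection of $\Mon(a)$ into $\Mon(b)$ as transitive subgroups of $\operatorname{Sym}(\Omega_b)$. The hard part will be to show this injection is surjective. By Goursat's lemma applied to $G \subseteq C_n \times \Mon(b)$, surjectivity is equivalent to $|\ker(G \to \Mon(b))| = n$; denoting by $L_b \subseteq \tilde L$ the splitting field of $b(W) - T$, this in turn is equivalent to $\C(X) \cap L_b = \C(X^n)$. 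Using $\gcd(s,n) = 1$, this reduces to ruling out $X^m \in L_b$ for every proper divisor $m$ of $n$, which is the main technical obstacle; the argument should proceed by a direct Galois-theoretic analysis, showing that any such $X^m \in L_b$ would produce a proper cyclic subextension of $L_b$ over $\C(X^n)$ incompatible with the combinatorial structure of $\Mon(b)$ as a transitive permutation group on $\Omega_b$ coming from the indecomposable polynomial $b$. Applying the resulting equality $\Mon(a) \cong \Mon(b)$ to each monomial Ritt move along Ritt's chain then yields the theorem.
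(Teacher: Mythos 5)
Your reduction to checking a single Ritt move is the right first step and matches the paper (via Theorem~\ref{Ritt1}). But the proof as written has a genuine gap: you set up the block-system/Goursat framework for the monomial swap $X^n\circ X^s h(X^n)=X^s h(X)^n\circ X^n$, obtain an injection $\Mon(X^s h(X^n))\hookrightarrow\Mon(X^s h(X)^n)$, and then explicitly defer the surjectivity, writing that it ``is the main technical obstacle'' and that the argument ``should proceed by a direct Galois-theoretic analysis.'' That deferred step is precisely the content of the theorem; without it you have no proof. (Your route also tacitly requires Ritt's second theorem, to know the nontrivial moves have the two stated forms and to rule out hybrid cases where an $X^2$ factor is reinterpreted as $T_2$ via linears.)

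The paper takes a cleaner and more general route that avoids both issues. Instead of classifying Ritt moves and handling the monomial case by hand, it proves (Theorem~\ref{monthm}) that for \emph{any} $a,b,c,d\in\C[X]\setminus\C$ with $a\circ b=c\circ d$ and $\gcd(\deg a,\deg c)=\gcd(\deg b,\deg d)=1$, the groups $\Mon(a)$ and $\Mon(d)$ are isomorphic as permutation groups (and likewise $\Mon(b)\cong\Mon(c)$). The engine is the cyclic inertia group $I$ at infinity: with $G=HI$ and the coprimality hypothesis one gets $U\cap V=H$ and $G=UV$ (where $U,V,H$ are the stabilizers of $b(x),d(x),x$), then $N\supseteq U\cap I$ forces $U=HN$ where $N$ is the core of $U$ in $G$, and a short computation shows $C:=\bigcap_{v\in V}H^v=N\cap V$, so $V/C\cong G/N$ carrying $H/C$ to $U/N$. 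This simultaneously produces the isomorphism and its surjectivity with no case analysis and no need for Ritt's second theorem. If you want to complete your argument along your own lines, I'd suggest abandoning the attempt to control $\C(X)\cap L_b$ directly and instead importing this inertia-group trick; the coprimality assumption plus cyclicity of $I$ is exactly what makes the count come out right.
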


We will show that $\#\Gamma_0(u_i)=1$ unless $\Mon(u_i)$ is cyclic, in which
case $\#\Gamma_0(u_i)=\#\Mon(u_i)$; thus, the Beardon--Ng invariant is
equivalent to the subsequence of cyclic groups in the sequence
$(\Mon(u_1),\dots,\Mon(u_r))$.

Ritt generalized his solution of $a\circ b = c\circ d$ in indecomposable
$a,b,c,d$ to give a similar description of all solutions to
this equation which satisfy $\deg(a)=\deg(d)$ and $\gcd(\deg(a),\deg(c))=1$
(but without assuming indecomposability).  This result has been applied to a
variety of topics, for instance:
\begin{itemize}
  \item The classification of all $f,g\in\Z[X]$ for which the Diophantine
    equation $f(X)=g(Y)$ has infinitely many integer solutions \cite{BT};
  \item The classification of $f,g\in\C[X]$ such that $f^{-1}(A)=g^{-1}(B)$
    for some infinite compact sets $A,B\subset\C$ \cite{P1};
  \item The description of $K[f]\cap K[g]$ and $K(f)\cap K(g)$ for arbitrary
    $f,g\in K[X]$, where $K$ is a field of characteristic zero \cite{BWZ};
  \item A proof that, for $f\in\C((X))\setminus\C(X)$, the set of positive
    integers $m$ for which $f(X^m)\in\C(X)[f]$ consists of the powers of a
    single integer \cite{Zannier2}.
\end{itemize}

However, to date there have been no applications of Ritt's procedure
for passing from one complete decomposition to another (except for the
derivation of the invariants mentioned above).  Our main results transform
Ritt's procedure into an applicable form.  We give a new method for describing
all complete decompositions of a polynomial.
Unlike Ritt's procedure, in our procedure one can determine in advance
exactly how many steps one must perform.

Our method is as follows.  We first write the polynomial $f$ as the composition
of polynomials of two types, which we call blocks: either indecomposable
polynomials which cannot
be transformed into $X^n$ or $T_n$ by composing with linears, or (possibly
decomposable) polynomials which can be so transformed.  Then, when possible,
we combine adjacent blocks of the form $\ell_1\circ X^n\circ\ell_2$
(with the $\ell_i$ linear), so long as their composition again has this form;
and we combine Chebychev blocks similarly.  There can be many different
decompositions of $X^n$, since it is the composition (in any order)
of the various $X^p$ where $p$ runs through the prime factors of $n$ counted
with multiplicities; similar remarks apply to $T_n$.  We obtain complete
decompositions of $f$ by inserting all such complete decompositions of each
$X^n$ or $T_n$ block.  These typically comprise all complete decompositions
of $f$.  There are only two ways to obtain further complete decompositions:
first, if an $X^n$ block is adjacent to a $T_m$ block, and if the linears
between $X^n$ and $T_m$ have appropriate composition, then we can move a
degree-$2$ factor from one block to the other (since $X^2$ is the composition
of $T_2$ with linears); however, we will show that after one degree-$2$ factor
has been moved, no further degree-$2$ factors can be moved in the same
direction.  And second, if an $X^n$ block is adjacent
to an indecomposable of a special form, then we can use (\ref{introeqritt})
to effectively move an $X^k$ sub-block to the other side of the indecomposable;
typically this will change the form of the indecomposable, but we will show
that if $k$ is chosen maximally then no further sub-block of the remaining
$X^{n/k}$ can switch sides with the transformed indecomposable.  We will
give a detailed exposition of our procedure in Section~\ref{sec constraints}.

One application of our results is to the decomposition of
iterates of a polynomial.  Here we write $f\iter{e}$ for the
$e\tth$ iterate of $f$, or in other words the $e\tth$ power of $f$
under the operation of composition.  By convention $f\iter{0}=X$, and
for a linear $\ell\in\C[X]$ we write
$\ell\iter{-1}$ for the inverse of $\ell$, which is again
a linear polynomial.

\begin{thm} \label{iterates}
  Pick $f\in\C[X]$ of degree $n>1$, and suppose there is no linear
  $\ell\in\C[X]$ for which $\ell\circ f\circ\ell\iter{-1}$ is either $X^n$
  or $T_n$ or $-T_n$.  If $a,b\in\C[X]$ satisfy
  $a\circ b = f\iter{e}$ for some $e\ge 1$, then
  \[
    a=f\iter{i}\circ \hat a \quad\text{ and }\quad
    b=\hat b\circ f\iter{j} \quad\text{ and }\quad
    \hat a\circ \hat b = f\iter{k}
  \]
  for some $\hat a,\hat b\in\C[X]$ and $i,j,k\ge 0$ with $k\le \log_2(n+2)$.
\end{thm}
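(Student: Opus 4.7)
The plan is to compare an arbitrary factorization $a\circ b = f\iter{e}$ with the natural one $f\iter{e} = f\circ\cdots\circ f$ by way of the authors' description (Section~\ref{sec constraints}) of the full set of complete decompositions of a polynomial. Fix a complete decomposition $f = u_1\circ\cdots\circ u_r$, and let
\[
  D_0 := (u_1,\ldots,u_r,\;u_1,\ldots,u_r,\;\ldots,\;u_1,\ldots,u_r)\qquad(e\text{ copies})
\]
be the resulting natural complete decomposition of $F:=f\iter{e}$. Refine $a$ and $b$ to complete decompositions and concatenate to obtain a complete decomposition $D$ of $F$ in which the cut between $a$ and $b$ occupies a specific position. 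By the authors' procedure, $D$ is obtained from $D_0$ via operations of three restricted types: (i) reorderings of the indecomposable factors within a single $X^m$- or $T_m$-block; (ii) at most one degree-$2$ swap across any interface between adjacent $X$- and $T$-blocks; and (iii) at most one Ritt swap of the form \eqref{introeqritt} against any adjacent indecomposable of special form. In particular, the \emph{block partition} of $F$ is an invariant shared by $D_0$ and $D$.

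The next step is to control this block partition of $F$. The hypothesis that $f$ is not linearly conjugate to $X^n$, $T_n$, or $-T_n$ means no complete copy of $f$ in $D_0$ collapses into a single $X^m$- or $T^m$-block. Hence the block partition of $F$ is essentially the concatenation of $e$ block partitions of $f$, the only deviation being that at each seam between consecutive copies of $f$ the trailing block of one copy may merge with the leading block of the next, provided they are of the same type ($X$ or $T$) and are compatibly aligned by the intervening linears. Combined with the one-use restriction on moves of types (ii) and (iii), this confines each block of $F$ to at most two consecutive copies of $f$ and gives a uniform degree bound on such a ``seam block''.

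Finally, locate the $a$-$b$ cut in $D$ and track it back through the rearrangements of Step~1 to a position within $D_0$. Reorderings within an $X^m$- or $T_m$-block and the single cross-block swaps of types (ii)--(iii) can move the cut only within the ambient block; thus the cut ends up inside (or on the boundary of) some single block $B$ of $F$. Let $i$ be the number of complete copies of $f$ in $D_0$ lying entirely to the left of $B$ and $j$ the number lying entirely to the right; setting $k := e-i-j$, the middle portion of $F$ covers $B$ and equals $f\iter{k}$. Writing $a = f\iter{i}\circ\hat a$ and $b = \hat b\circ f\iter{j}$ for the appropriate leftover pieces $\hat a,\hat b$ gives $\hat a\circ\hat b = f\iter{k}$, and the degree bound on $B$ from Step~2 translates into $2^k\le n+2$, i.e.\ $k\le\log_2(n+2)$.

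\emph{Main obstacle.} The crux lies in Step~2: quantifying precisely how large a seam block of $F$ can be and converting this into the bound $2^k\le n+2$. The non-conjugacy hypothesis on $f$ is essential to prevent an entire copy of $f$ from being absorbed into a larger $X^m$- or $T_m$-block, and the authors' single-use restriction on the moves of types (ii) and (iii) is essential to prevent a propagating chain of degree-$2$ migrations or Ritt swaps from producing seam blocks that cross many copies of $f$. Once these structural constraints are firmly in place, the bound on the size of a seam block (and hence on $k$) is forced by a direct degree count involving the prime factorization of $n$ and the base-$2$ accounting of the degree-$2$ and Ritt-$X^n$ swaps that touch the seam.
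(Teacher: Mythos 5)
Your overall strategy—fix the natural decomposition $D_0$ of $f\iter{e}$, compare it to a decomposition refining the cut $a\circ b$, and use the constraints of Section~\ref{sec constraints} to track what can move where—is indeed the paper's strategy. However, your Step~2 contains a false claim that causes the degree count in Step~3 to be unjustified, and this gap is exactly where the real work of the theorem lies.

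The claim in Step~2 that the ``block partition of $F$ is an invariant shared by $D_0$ and $D$,'' and that each block of $F$ is ``confined to at most two consecutive copies of $f$,'' is contradicted by Example~\ref{ex2} of the paper. There, $f = f_m\circ X^2$ with $f_i = X(1+X^{2^i})^{2^{m-i}}$, and $X^2\circ f_i = f_{i-1}\circ X^2$. In $D_0$ the cyclic factors appear as $m+1$ separate blocks $X^2$, each sandwiched between copies of $f_m$; but after a propagating chain of $m+1$ Ritt moves of type~\eqref{introeqritt} one obtains the decomposition $f_m\circ f_{m-1}\circ\cdots\circ f_0\circ X^{2^{m+1}}$, in which the single cyclic block $X^{2^{m+1}}$ spans all $m+1$ copies of $f$. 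The ``one Ritt swap per indecomposable'' restriction does not prevent such chains: here each $f_i$ participates in exactly one swap, and the swaps still compound across $m+1$ copies. Thus the $a$--$b$ cut does not stay within (or on the boundary of) a single block of $D_0$, contrary to what Step~3 assumes, and your ``uniform degree bound on such a seam block'' has no grounds.

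The actual mechanism that forces $k\le\log_2(n+2)$ is not a confinement of blocks but a quantitative penalty on chaining: each Ritt move of type~\eqref{introeqritt} that allows the cut to pass a non-cyclic, non-dihedral indecomposable $u$ requires $u$ to be simultaneously of the forms $X^s h(X)^{d_1}$ and $X^{\hat s}\hat h(X^{d_2})$, and Lemma~\ref{mix} then forces $u$ to have degree at least $1+d_1 d_2$. Tracking the positions $I,J$ that witness $\deg(f)\nmid\deg(a),\deg(b)$, the paper shows (Proposition~\ref{clumps} combined with Propositions~\ref{mess} and~\ref{movecheb}) that $d_1 d_2$ grows like $2^{k-1}$ or $2^{k-2}$, so $\deg(u_i)\ge 2^{k-1}+1$ and hence $n\ge 2\deg(u_i)\ge 2^k+2$ (with a parallel dihedral/cyclic analysis handled by Proposition~\ref{movecheb} and Lemma~\ref{cyclicmove}). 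This is the content of Theorem~\ref{preciseiterates}, of which Theorem~\ref{iterates} is a corollary (after Lemma~\ref{blockiterate} normalizes $a$ and $b$). Your proposal gestures at ``base-$2$ accounting'' but never produces the inequality $\deg(u_i)\ge 1 + d_1 d_2$ or any substitute; without it, the logarithmic bound does not follow from anything you have stated.
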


This result says that if $e>\log_2(n+2)$ then every
decomposition $a\circ b=f\iter{e}$ can be obtained from some
decomposition of $f\iter{\lfloor\log_2(n+2)\rfloor}$ by composing
on the outside with several copies of $f$.  The bound on $k$ can be
improved to $k\le\lfloor\log_2(n)\rfloor$ if $n\ne 6$, but in
Example~\ref{ex2} we will show that the bound cannot be improved
further if $n=2^m+2$ with $m\ge 3$.  We will prove a refined version of
Theorem~\ref{iterates} in Section~\ref{sec constraints}, as a consequence
of the stronger Theorem~\ref{preciseiterates}.

Theorem~\ref{iterates} is one of the key ingredients in the companion
paper \cite{GTZ2}, in which the following is proved:

\begin{thm}\label{lines}
For $x_0,y_0\in\C$, if $f,g\in\C[X]$ are nonlinear and the orbits
$\{x_0,f(x_0),f(f(x_0)),\dots\}$ and $\{y_0,g(y_0),g(g(y_0)),\dots\}$ have
infinite intersection, then $f$ and $g$ have a common iterate.
\end{thm}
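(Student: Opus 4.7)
The plan is to deduce Theorem~\ref{lines} by combining Theorem~\ref{iterates} with a Diophantine input. The central intuition is that an infinite intersection of forward orbits cannot be an accident of unrelated pointwise coincidences; it must arise from an algebraic identity among iterates of $f$ and $g$, and Theorem~\ref{iterates} is precisely the tool for turning such an identity into a common iterate.

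First I would make standard reductions. By replacing $(x_0,y_0)$ with a pair of the form $(f\iter{a}(x_0),g\iter{b}(y_0))$ lying in the intersection, I may assume $x_0=y_0=w$, and the hypothesis becomes that infinitely many $(i,j)\in\N^2$ satisfy $f\iter{i}(w)=g\iter{j}(w)$. Since $w$ cannot be preperiodic for both maps (else the intersection would be finite), the sequences $(f\iter{i}(w))_i$ and $(g\iter{j}(w))_j$ are eventually injective, so each $i$ pairs with at most one $j$ and both coordinates tend to infinity along the sequence. Comparing canonical heights along this sequence forces $\log\deg(f)/\log\deg(g)$ to be rational, so after passing to commensurable iterates of $f$ and $g$ and shifting the base points by a bounded amount one may assume $\deg f=\deg g=n$. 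Then $|i-j|$ is bounded on the sequence, and a further subsequence fixes an integer $k$ with $f\iter{m}(w)=g\iter{m+k}(w)$ for infinitely many~$m$.

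Next I would dispose of the exceptional case in which $f$ or $g$ is linearly conjugate to $X^n$ or $\pm T_n$, using the explicit commutation relations \eqref{introeqritt} and \eqref{introeqcheb}: in those cases the iterate semigroup is essentially cyclic, and an infinite orbit intersection immediately forces both maps into the same commuting semigroup. Outside this case, Theorem~\ref{iterates} applies to both $f$ and $g$. The crux — and the main obstacle — is upgrading the pointwise coincidences $f\iter{m}(w)=g\iter{m+k}(w)$ into an algebraic identity between two fixed iterates. The natural mechanism is a Diophantine ingredient applied to the curves $C_m\col f\iter{m}(X)-g\iter{m+k}(Y)=0$: the orbit supplies a point on each $C_m$ inside a number field of bounded degree, and a Siegel/Bilu--Tichy style analysis, combined with the height growth $h(f\iter m(w))\asymp n^m$, forces that for infinitely many~$m$ some irreducible component of $C_m$ has genus zero with at most two points at infinity. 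This is equivalent to a nontrivial functional identity $f\iter{m}=A\circ u$ and $g\iter{m+k}=A\circ v$ sharing a common polynomial left factor $A$ of positive degree.

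With such a decomposition in hand, Theorem~\ref{iterates} finishes the job. Since $f$ is non-exceptional and $m\gg\log_2(n+2)$, the theorem writes $A=f\iter{i}\circ\hat A$ for some $i\ge 0$; applied on the $g$-side it writes $A=g\iter{j}\circ\hat A'$. Equating the two gives $f\iter{i}\circ\hat A=g\iter{j}\circ\hat A'$, and iterating the procedure — each step absorbing the tail polynomial into a further iterate via Theorem~\ref{iterates} — produces an honest equality $f\iter{I}=g\iter{J}$, the desired common iterate. To summarize: the reductions are routine, the extraction of a common iterate from a decomposition identity is exactly what Theorem~\ref{iterates} was designed for, and the real obstacle lies in the intermediate step of converting a dynamical intersection hypothesis into an algebraic identity, where a Diophantine input must be fused with the combinatorial structure of Ritt's second theorem.
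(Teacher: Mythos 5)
This theorem is not proved in the present paper; the text explicitly states it is proved in the companion paper \cite{GTZ2} and only sketches the reduction, so there is no internal proof to compare against. Measured against that sketch, your outline captures the right ingredients (Siegel's theorem, Ritt-type decomposition theory, Theorem~\ref{iterates}) but has two substantive problems.

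First, you mischaracterize what Siegel's theorem delivers. The genus-zero, at-most-two-points-at-infinity conclusion for a component of $f\iter{m}(X)=g\iter{m+k}(Y)$ yields a parametrization $f\iter{m}\circ a = g\iter{m+k}\circ b$ with $a,b$ \emph{Laurent polynomials} composed on the \emph{inside}. This is an identity between two decompositions of a single Laurent polynomial $F:=f\iter{m}\circ a$, not a common \emph{left} factor $A$ of $f\iter{m}$ and $g\iter{m+k}$ as you assert. Extracting a relation between $f\iter{m}$ and $g\iter{m+k}$ from this identity is precisely where one needs the Laurent-polynomial extension of Ritt's theory (cited in the paper as \cite{BT,P2,laurent}), and that machinery is absent from your argument; the step where you write ``This is equivalent to a nontrivial functional identity $f\iter{m}=A\circ u$ and $g\iter{m+k}=A\circ v$'' does not follow.

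Second, your height-comparison reduction to $\deg f=\deg g$ cannot be right as stated. The paper explicitly says that the equal-degree case was settled earlier in \cite{GTZ} by a dramatically simpler argument, and that the full strength of the present paper's results is needed for the general case. If canonical heights forced $\log\deg(f)/\log\deg(g)\in\Q$ so that one could pass to commensurable iterates, the general case would reduce to the easy one, contradicting the authors' own assessment. The height estimate $\hat h_f(f\iter m(w))=\deg(f)^m\hat h_f(w)$ constrains $m\log\deg(f)-m'\log\deg(g)$ to be bounded along the intersection sequence, but this alone does not force rationality of the ratio. Finally, the closing ``iterate the procedure'' step that is supposed to produce an equality $f\iter{I}=g\iter{J}$ is left as a gesture; as written it is not clear why the process terminates or collapses to an identity rather than merely a chain of shared factors.
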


This question can be translated into a decomposition problem as follows.
Supposing for simplicity that $x_0,y_0\in\Z$ and $f,g\in\Z[X]$, the
hypothesis implies that for any $i,j>0$ the equation
$f\iter{i}(X)=g\iter{j}(Y)$ has infinitely many solutions in integers $X,Y$.
By Siegel's theorem, it follows that $f\iter{i}\circ a = g\iter{j}\circ b$
for some nonconstant $a,b\in\C(X)$ which are Laurent polynomials (i.e.,
rational functions whose denominator is a power of $X$).  Since Ritt's results
have been generalized to the setting of Laurent polynomials
\cite{BT,P2,laurent},
this gives information about decompositions of $f\iter{i}$ and $g\iter{j}$,
which leads to the application of Theorem~\ref{iterates}.
In an earlier paper \cite{GTZ}, Theorem~\ref{lines} was proved in case
$\deg(f)=\deg(g)$; in this special case, the polynomial decomposition arguments
simplify dramatically (essentially because of Corollary~\ref{multipledeg}).
However, the full strength of the results of the present paper seems to be
needed to prove Theorem~\ref{lines} in general.

Another application of the results of this paper was found by Medvedev and
Scanlon: combining our results with a model-theoretic result of Chatzidakis and
Hrushovski, they described the subvarieties of $\Aff^n$ preserved by a
coordinatewise polynomial map
$(x_1,\dots,x_n)\mapsto (f_1(x_1),\dots,f_n(x_n))$ with $f_i\in\C[X]$.

Ritt's results are not well understood:
in many treatments the statements of Ritt's results are either false
\cite{BN, vzG, KL} or weaker than the original versions
\cite{Binder,DW,Engstrom,Fried-Ritt,FriedMacRae,LN,Levi,Mueller}.
In light of this, we have included simplified accounts of
Ritt's proofs (in modern language) in the present paper.

Ritt's proofs have two distinct flavors.  His solution of $a\circ b=c\circ d$
uses that the curve $a(X)=c(Y)$ has genus zero; by expressing this genus in
terms of the ramification in the covers $\Line\to\Line$ corresponding to
$b$ and $d$, one obtains a system of equations satisfied by the ramification
indices, and the main work is to solve this system.  See the appendix for a
simplified version of this argument.  Ritt's proof of his
iterative procedure uses Galois theory to translate the problem to a question
about cyclic groups.  We give an account of this in the next
section, and by extending the method we prove
Theorem~\ref{monRitt1} and other results.
In Section~\ref{sec:various} we give various properties of the special
polynomials occurring in \eqref{introeqritt} and \eqref{introeqcheb}.
We prove our main results in Section~\ref{sec constraints}.  Then in the final
section we briefly survey
related topics, including decomposition of rational functions, decomposition
of polynomials over arbitrary fields, decomposition algorithms, and
monodromy groups of indecomposable polynomials.

\vskip.2cm \noindent
\emph{Acknowledgements:} The first author thanks Dragos Ghioca and
Tom Tucker for a stimulating collaboration on the paper \cite{GTZ},
which led to a conjectural version of Theorem~\ref{iterates};
proving this conjecture was the initial motivation for the research presented
in this paper.  The authors thank Avi Wigderson for suggesting the analogy
with knot theory.


\section{Monodromy groups and Ritt's first theorem}
\label{sec: Ritt1}

In this section we present a Galois-theoretic framework which enables us to
translate many questions about polynomial decomposition into questions about
subgroups of cyclic groups.  In particular, we prove Ritt's result
that one can pass from any complete decomposition of $f$ to any other via
finitely many changes of the following form:

\begin{defi}
  If $(u_1,\dots,u_r)$ and $(v_1,\dots,v_r)$ are complete decompositions of a
  polynomial $f\in\C[X]$, then we say they are \emph{Ritt neighbors} if there
  exists $i$ with $1\le i<r$ such that
  \begin{itemize}
    \item $u_j=v_j$ for $j\notin\{i,i+1\}$, and
    \item $u_i \circ u_{i+1} = v_i \circ v_{i+1}$.
  \end{itemize}
\end{defi}

\begin{thm} \label{Ritt1}
  Pick $f\in\C[X]$ with $\deg(f)>1$.  If $\cU$ and $\cV$
  are complete decompositions of $f$, then there is a finite sequence
  $\mathcal S$ of complete decompositions of $f$ such that
  $\cU,\cV\in\mathcal S$ and every pair of
  consecutive decompositions in $\mathcal S$ are Ritt neighbors.
\end{thm}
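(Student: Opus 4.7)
The plan is to translate the problem via Galois theory into a statement about maximal chains in a distributive lattice of subgroups. Let $E := \C(t)$, let $x$ be a root of $f(X)-t$, let $L$ be the Galois closure of $E(x)/E$, and set $G := \Gal(L/E)$ and $H := \Gal(L/E(x))$. Then $G$ acts faithfully and transitively on $\Omega := G/H$ of size $n := \deg f$, with $H$ stabilizing a distinguished point $0 \in \Omega$. By the Galois correspondence, together with L\"uroth's theorem and the fact that every intermediate field between $E$ and $E(x)$ is totally ramified at $t=\infty$ (because $f$ is a polynomial), subgroups $U \in [H,G]$ biject with decompositions $f=g\circ h$ of $f$ into polynomials, up to equivalence by linear polynomials. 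Under this bijection, complete decompositions of $f$ correspond to maximal chains $H = U_r < \dots < U_0 = G$ in $[H,G]$, and Ritt-neighbor pairs correspond to maximal chains that differ in at most one intermediate subgroup.

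The key input is the ramification of $f$ at infinity: since $f$ is a polynomial of degree $n$, the inertia in $G$ at $t=\infty$ is cyclic of order $n$ and is generated by some $\sigma \in G$ that acts on $\Omega$ as a single $n$-cycle. Identifying $\Omega$ with $\Z/n\Z$ so that $\sigma$ is translation by $1$ and the coset $H$ corresponds to $0$, I associate to each $U \in [H,G]$ the $G$-invariant partition $\Pi_U$ of $\Omega$ by $U$-orbits; $\Pi_U$ is in particular $\sigma$-invariant, and the $\sigma$-invariant partitions of $\Z/n\Z$ are indexed by divisors $b \mid n$ (the partition for $b$ has the block $(n/b)\Z/n\Z$ containing $0$). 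So $U \mapsto [U:H]$ is an injective map from $[H,G]$ into the divisor lattice of $n$. A short computation in the partition lattice of $\Omega$ identifies $\Pi_U \vee \Pi_V$ and $\Pi_U \wedge \Pi_V$ as the $\sigma$-invariant partitions with block sizes $\lcm([U:H],[V:H])$ and $\gcd([U:H],[V:H])$, respectively; since joins and meets of $G$-invariant partitions are again $G$-invariant, this shows that $\mathcal{L} := \{[U:H]:U\in[H,G]\}$ is a sublattice of the divisor lattice of $n$ and that $U\mapsto[U:H]$ is a lattice isomorphism $[H,G]\to\mathcal{L}$.

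To finish, I would invoke the standard fact that in a finite distributive lattice any two maximal chains from the bottom element to the top element can be connected by a sequence of chains, each differing from its predecessor in exactly one intermediate element. The divisor lattice of $n$ is distributive, so its sublattice $\mathcal{L}$ inherits this property. Translating back through the lattice isomorphism $\mathcal{L} \cong [H,G]$ and then through the Galois correspondence, each elementary chain swap becomes a Ritt-neighbor move on complete decompositions, completing the proof.

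I expect the main obstacle to be the second paragraph: establishing that $U \mapsto [U:H]$ identifies $[H,G]$ with a sublattice of divisors of $n$ via the $n$-cycle $\sigma$. Once this reduction is in place, the distributive-lattice chain-connectivity theorem finishes the argument routinely.
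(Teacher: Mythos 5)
Your argument is correct and takes the same route as the paper: a L\"uroth/Galois reduction identifies (equivalence classes of) decompositions with chains of groups in $[H,G]$, the inertia group $I$ at $t=\infty$ is a cyclic transitive complement to $H$, and $[H,G]$ is identified with a sublattice of the divisor lattice of $n$ closed under $\gcd$ and $\lcm$ (your partition-lattice computation is just a re-expression of the paper's Lemma~\ref{L:G=HI} and Corollary~\ref{L:G=HIcor}). The only genuine divergence is the final step. You invoke the general fact that in a finite distributive lattice any two maximal bottom-to-top chains are connected by single-element swaps; this is true (via Birkhoff's representation, maximal chains correspond to linear extensions of the poset of join-irreducibles, and linear extensions are connected under adjacent transpositions, a bubble-sort argument), but you treat it as a black box, so you would owe either a citation or a short proof. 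The paper instead proves its own tailored version, Lemma~\ref{Ritt1gp}, by a direct induction on $\abs{I}$: if the penultimate terms $A_{r-1}\ne B_{s-1}$, maximality forces $A_{r-1}B_{s-1}=I$, there is no admissible group strictly between $A_{r-1}\cap B_{s-1}$ and $A_{r-1}$, and one splices through a chain passing through $A_{r-1}\cap B_{s-1}$ and applies induction three times. The paper's proof is self-contained and needs only closure under intersections and products, while yours is shorter modulo the imported distributive-lattice theorem; both are correct, and in spirit the paper's induction is essentially a hands-on proof of the fact you cite, specialized to a sublattice of a divisor lattice.
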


We use the following notation in this section.

\begin{itemize}
\item $f$ is a nonconstant polynomial in $\C[X]$
\item $S$ is the set of pairs $(a,b)\in\C[X]^2$ such that $a\circ b=f$
\item $t$ is transcendental over $\C$
\item $L$ is the splitting field of $f(X)-t$ over $\C(t)$
\item $x$ is a root of $f(X)-t$ in $L$
\item $G$ is the monodromy group $\Mon(f)=\Gal(L/\C(t))$
\item $H$ is stabilizer of $x$ in $G$, namely $H=\Gal(L/\C(x))$
\end{itemize}


\subsection{General formalism}

We begin by reviewing the Galois-theoretic framework developed
by Ritt \cite{Ritt} for addressing polynomial decomposition problems.
Our presentation is a modernized and simplified version of Ritt's.

\begin{lemma} \label{luroth}
  The map $\rho\colon (a,b)\mapsto \C(b(x))$ is a surjection from $S$ onto the
  set of fields between $\C(x)$ and $\C(t)$.  For $d\in\C[X]$, we have
  $\rho((a,b))=\C(d(x))$ if and only if there is a linear $\ell\in\C[X]$
  such that $d=\ell\circ b$, in which case $f=(a\circ\ell\iter{-1})\circ d$.
  Moreover,
  $[\C(x)\col\C(b(x))]=\deg(b)$ and $[\C(b(x))\col\C(t)]=\deg(a)$.
\end{lemma}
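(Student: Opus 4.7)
The proof rests on L\"uroth's theorem combined with an analysis of the place at infinity in $\C(x)$. First, for any $(a,b)\in S$ we have $b(x)\in\C(x)$ and $t=a(b(x))\in\C(b(x))$, so $\rho((a,b))$ lies between $\C(t)$ and $\C(x)$, showing $\rho$ is well-defined.

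For surjectivity, let $F$ be a field with $\C(t)\subseteq F\subseteq\C(x)$. L\"uroth's theorem gives $F=\C(w)$ for some $w\in\C(x)$. Since $f\in\C[X]$ is a polynomial, $x$ has a unique pole $P_\infty$ in $\C(x)$, and $P_\infty$ is the only place of $\C(x)$ lying over the infinite place of $\C(t)$. Hence the restriction of $P_\infty$ to $F$ is the unique place of $F$ over the infinite place of $\C(t)$. After replacing $w$ by a suitable M\"obius transform, I may arrange that this restriction is the infinite place of $\C(w)$. Then $w$ has all its poles at $P_\infty$, so $w\in\C[x]$, i.e., $w=b(x)$ for some $b\in\C[X]$. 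Since $t\in F=\C(b(x))$, write $t=r(b(x))$ with $r\in\C(Y)$; a parallel pole argument (any finite pole of $r$ would produce poles of $t$ at finite places of $\C(x)$, contradicting $t\in\C[x]$) forces $r\in\C[Y]$, and setting $a=r$ gives $(a,b)\in S$ with $\rho((a,b))=F$.

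To describe the fibers of $\rho$, suppose $\rho((a,b))=\C(d(x))$ with $d\in\C[X]$; then $\C(b(x))=\C(d(x))$, so $d(x)=\phi(b(x))$ for some $\phi\in\C(Y)$. Comparing the degrees $[\C(x)\col\C(d(x))]$ and $[\C(x)\col\C(b(x))]$ (established below) forces $\phi$ to be a M\"obius transformation. Since both $b(x)$ and $d(x)$ have their only pole at $P_\infty$, the transformation $\phi$ sends infinity to infinity, so it is a linear polynomial $\ell\in\C[X]$, giving $d=\ell\circ b$. Conversely, if $d=\ell\circ b$ for a linear $\ell$, then $\C(d(x))=\C(b(x))$ and $f=a\circ b=(a\circ\ell\iter{-1})\circ d$.

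Finally, the degree formulas come from irreducibility: the polynomial $b(X)-b(x)\in\C(b(x))[X]$ is linear (hence irreducible) in the indeterminate $b(x)$, so it is irreducible in $\C[X,b(x)]$ and, by Gauss's lemma, in $\C(b(x))[X]$; as it has $x$ as a root and degree $\deg(b)$ in $X$, we get $[\C(x)\col\C(b(x))]=\deg(b)$. Similarly $a(Y)-t\in\C(t)[Y]$ is irreducible of degree $\deg(a)$ with $b(x)$ as a root, yielding $[\C(b(x))\col\C(t)]=\deg(a)$. The main obstacle is the place-at-infinity bookkeeping in the surjectivity argument, which must be invoked twice---once to force the L\"uroth generator of $F$ to be polynomial in $x$, and once to force the resulting expression for $t$ to be polynomial in that generator---since without it one would only obtain rational functions.
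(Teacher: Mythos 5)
Your proof is correct and follows essentially the same route as the paper's: L\"uroth's theorem to get a generator, the uniqueness of the place of $\C(x)$ above the infinite place of $\C(t)$ (equivalently, that $\infty$ is the unique $f$-preimage of $\infty$) to force that generator and the resulting expression for $t$ to be polynomials, and Gauss's lemma for the degree formulas. The paper bundles the polynomiality of $a$ and $b$ into a single preimage-of-$\infty$ step and derives the two degree formulas from $[\C(x):\C(t)]=\deg f$ by multiplicativity, whereas you treat $b$ and then $a=r$ separately and apply Gauss's lemma twice, but these are only cosmetic differences.
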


\begin{proof}
  Let $E$ be a field between $\C(x)$ and $\C(t)$.  By L\"uroth's
  theorem, $E=\C(b(x))$ for some $b\in\C(X)$.  Since $E$ is unchanged if we
  replace $b$ by $\ell\circ b$ where $\ell\in\C(X)$ has degree one, we may
  assume $b(\infty)=\infty$.  Since $t=f(x)$ lies in $\C(b(x))$, we have
  $f(X)=a(b(X))$ for some $a\in\C(X)$.  Now
  $X=\infty$ is the unique preimage of $\infty$ under $f$, and
  $b(\infty)=\infty$, so $X=\infty$ is the unique preimage of
  $\infty$ under each of $a(X)$ and $b(X)$.  Thus $a(X)$ and $b(X)$ are
  polynomials, so $\rho$ is surjective.
  
  By Gauss's lemma, $f(X)-t$ is irreducible over $\C(t)$, so
  $[\C(x)\col\C(t)]=\deg(f)$.  This argument implies the final statement of the
  result.  Moreover, for $d\in\C[X]$ and $(a,b)\in S$, we have
  $\C(b(x))=\C(d(x))$ if and only if $d=\ell\circ b$ and $b=\hat\ell\circ d$
  for some $\ell,\hat\ell\in\C(X)$; then $\ell$ and $\hat\ell$ have degree one,
  and since $b$ and $d$ are polynomials it follows that $\ell$ is linear.
\end{proof}

This result enables us to translate questions about decompositions of $f$
into questions about intermediate fields between $\C(x)$ and $\C(t)$.
Here we define a decomposition of $f$ to be a sequence
$(a_1,\dots,a_r)$ where $f=a_1\circ\dots\circ a_r$ and
each $a_i\in\C[X]$ satisfies $\deg(a_i)>1$ (we do not require the $a_i$ to
be indecomposable).  Such a decomposition corresponds to the chain of fields
$C(x)\supset\C(a_r(x))\supset\C(a_{r-1}\circ a_r(x))\supset\dots\supset
 \C(a_1\circ\dots\circ a_r(x))$.
Letting $\theta$ denote this map from decompositions of $f$ to decreasing
chains of fields from $\C(x)$ to $\C(t)$, we now describe the decompositions
which map to the same chain of fields.

\begin{defi} For $f\in\C[X]$, we say two decompositions
  $(a_1,\dots,a_r)$ and $(b_1,\dots,b_s)$ of $f$ are
  \emph{equivalent} if $r=s$ and there are linear
  $\ell_0,\dots,\ell_r\in\C[X]$, with $\ell_0=\ell_r=X$, such that
  $b_i=\ell_{i-1}\circ a_i\circ \ell_i\iter{-1}$ for $1\le i\le r$.
\end{defi}

This is an instance of the category-theoretic notion of
equivalence of two factorizations of an arrow.
Our next result follows from Lemma~\ref{luroth}.

\begin{cor} \label{chains}
  The map $\theta$ induces a bijection between equivalence classes of
  decompositions of $f$ and decreasing chains of fields from $\C(x)$ to
  $\C(t)$.  If the decomposition $(a_1,\dots,a_r)$ corresponds to the
  chain of fields $\C(x)=E_r>E_{r-1}>\dots>E_0=\C(t)$, then
  $[E_i\col E_{i-1}]=\deg(a_i)$ for $1\le i\le r$.
\end{cor}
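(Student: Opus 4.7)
The plan is to deduce every clause of the corollary directly from Lemma \ref{luroth}, which already handles the one-step case. There are three things to verify: that $\theta$ descends to equivalence classes, surjectivity, and injectivity; the degree formula is then a by-product of the surjectivity construction.

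First I would check well-definedness on equivalence classes. If $(a_1,\dots,a_r)$ and $(b_1,\dots,b_r)$ are equivalent via linears with $\ell_0=\ell_r=X$ and $b_i=\ell_{i-1}\circ a_i\circ\ell_i\iter{-1}$, a telescoping computation gives
\[
b_i\circ b_{i+1}\circ\cdots\circ b_r=\ell_{i-1}\circ a_i\circ a_{i+1}\circ\cdots\circ a_r.
\]
Since $\ell_{i-1}$ is linear, $\C(b_i\circ\cdots\circ b_r(x))=\C(a_i\circ\cdots\circ a_r(x))$, so the two associated chains coincide. For surjectivity, given a chain $\C(x)=E_r>E_{r-1}>\cdots>E_0=\C(t)$, I would invoke Lemma \ref{luroth} at each level to pick $c_i\in\C[X]$ with $E_i=\C(c_i(x))$, normalizing by composition with linears so that $c_r=X$ and $c_0=f$. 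The last clause of Lemma \ref{luroth} then produces $a_i\in\C[X]$ with $c_{i-1}=a_i\circ c_i$; iterating yields $f=c_0=a_1\circ\cdots\circ a_r$. Strictness of the inclusions forces $\deg(a_i)>1$, so $(a_1,\dots,a_r)$ is a decomposition of $f$ mapping to the prescribed chain.

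The main work is injectivity, though it remains routine. Suppose $(a_1,\dots,a_r)$ and $(b_1,\dots,b_s)$ produce the same chain; equality of lengths is immediate. I would argue top-down: Lemma \ref{luroth} applied to $\C(a_r(x))=\C(b_r(x))$ furnishes a linear $\ell_{r-1}$ with $b_r=\ell_{r-1}\circ a_r$. Assuming inductively that $b_j=\ell_{j-1}\circ a_j\circ\ell_j\iter{-1}$ holds for all $j>i$, telescoping gives $b_{i+1}\circ\cdots\circ b_r=\ell_i\circ a_{i+1}\circ\cdots\circ a_r$, while the coincidence $\C(b_i\circ\cdots\circ b_r(x))=\C(a_i\circ\cdots\circ a_r(x))$ together with Lemma \ref{luroth} yields a linear $\ell_{i-1}$ with $b_i\circ\cdots\circ b_r=\ell_{i-1}\circ a_i\circ\cdots\circ a_r$; substituting and cancelling the nonconstant tail $a_{i+1}\circ\cdots\circ a_r$ on the right gives $b_i=\ell_{i-1}\circ a_i\circ\ell_i\iter{-1}$. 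At the bottom, $b_1\circ\cdots\circ b_r=f=a_1\circ\cdots\circ a_r$ forces $\ell_0=X$, so the two decompositions are equivalent.

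Finally, the degree formula falls out of a direct application of Lemma \ref{luroth}: with $c_i=a_{i+1}\circ\cdots\circ a_r$ (and $c_r=X$) as in the surjectivity construction, Lemma \ref{luroth} applied to the pair $(a_1\circ\cdots\circ a_i,\,c_i)$ gives $[\C(x)\col E_i]=\deg(c_i)$ for each $i$, hence $[E_i\col E_{i-1}]=\deg(c_{i-1})/\deg(c_i)=\deg(a_i)$. The only place any care is needed is the right-cancellation of $a_{i+1}\circ\cdots\circ a_r$ in the injectivity step, which is valid because right composition with a nonconstant polynomial is injective on $\C[X]$.
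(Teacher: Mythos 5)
Your proof is correct and follows exactly the route the paper intends: the paper offers no argument beyond the remark ``follows from Lemma~\ref{luroth},'' and your write-up is precisely the routine deduction — well-definedness by telescoping, surjectivity and the degree formula by lifting each $E_i$ to a polynomial $c_i$ and applying Lemma~\ref{luroth} level by level, and injectivity by a top-down induction using right-cancellation of nonconstant polynomials.
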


We have reduced the study of decompositions of $f$ to the study of
decreasing chains of fields from $\C(x)$ to $\C(t)$.  As usual, the latter
is equivalent to the study of increasing chains of groups from $H$ to $G$.
Concretely, the map
$W\mapsto \C(x)^W$ is a bijection from the set
of groups between $H$ and $G$ to the set
of fields between $\C(x)$ and $\C(t)$, and
$\abs{W_1\col W_2}=[\C(x)^{W_2}\col\C(x)^{W_1}]$ for any groups $W_1,W_2$
with $H<W_2<W_1<G$.  Since there are only finitely many groups between
$H$ and $G$, this implies the following.

\begin{cor} \label{fin}
  There are only finitely many equivalence classes of decompositions of $f$.
\end{cor}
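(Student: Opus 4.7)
The plan is to chain together the results already assembled immediately before the corollary. By Corollary~\ref{chains}, the map $\theta$ is a bijection between equivalence classes of decompositions of $f$ and strictly decreasing chains of fields $\C(x)=E_r>E_{r-1}>\dots>E_0=\C(t)$. So it suffices to show there are only finitely many such chains.

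Next, I would invoke the Galois correspondence, which the paragraph preceding the corollary already sets up. Because $L$ is the splitting field of $f(X)-t$ over $\C(t)$, the extension $L/\C(t)$ is finite Galois with group $G$, and $H=\Gal(L/\C(x))$ is the subgroup fixing $\C(x)$. The map $W\mapsto \C(x)^W$ gives a bijection between subgroups $W$ with $H\le W\le G$ and fields between $\C(t)$ and $\C(x)$, and it is inclusion-reversing. Hence decreasing chains of fields from $\C(x)$ to $\C(t)$ correspond bijectively to increasing chains of subgroups $H=W_0<W_1<\dots<W_r=G$.

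Finally, since $[L\col\C(t)]$ is finite (bounded by $\deg(f)!$, as $L$ is generated over $\C(t)$ by the $\deg(f)$ roots of $f(X)-t$), the group $G$ is finite, so it has only finitely many subgroups, and therefore only finitely many chains of subgroups at all. Combining with the two bijections above yields only finitely many equivalence classes of decompositions. There is no real obstacle here: the content is entirely packaged into Corollary~\ref{chains} plus the finiteness of $G$, and the proof is two or three lines.
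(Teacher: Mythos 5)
Your proof is correct and matches the paper's argument exactly: the paper likewise passes via Corollary~\ref{chains} to decreasing chains of intermediate fields between $\C(x)$ and $\C(t)$, then uses the Galois correspondence $W\mapsto\C(x)^W$ to identify these with increasing chains of subgroups between $H$ and $G$, and concludes from finiteness of $G$. Nothing to add.
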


We make one further reduction.  Let $I$ be the inertia group at a place of $L$
lying over $t=\infty$, so $I$ is a cyclic subgroup of $G$, and moreover $I$ is
transitive (since $t=\infty$ is totally ramified in $\C(x)/\C(t)$).
Alternately, we could define $I$ to be the Galois group of $f(X)-t$ over
$\C((1/t))$, so $I$ is cyclic because any finite extension of $\C((1/t))$ is
cyclic, and $I$ is transitive because the monic polynomial whose roots are the
reciprocals of the roots of $f(X)-t$ is Eisenstein over $\C[[1/t]]$ and hence
irreducible over $\C((1/t))$.

The following simple lemma reduces the study of decompositions of $f$ to the
study of increasing chains of groups from $1$ to $I$.

\begin{lemma} \label{L:G=HI}
  Let $I$ be a cyclic subgroup of the finite group $G$, and let $H$ be a
  subgroup of $G$ such that $G=HI$ and $H\cap I=1$.  For any
  group $W$ between $H$ and $G$, we have $W=HJ$ where $J=W\cap I$.
  Conversely, for any subgroup $J$ of $I$, the set $HJ$ is a group if and
  only if $HJ=JH$, in which case $\abs{HJ\col H}=\abs{J}$.
\end{lemma}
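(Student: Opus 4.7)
The plan is to handle the two assertions separately, both by routine bookkeeping with cosets. The crucial observation underlying everything is that the two hypotheses $G=HI$ and $H\cap I=1$ together force every element $g\in G$ to have a \emph{unique} expression $g=hi$ with $h\in H$ and $i\in I$: existence is $G=HI$, and if $h_1 i_1 = h_2 i_2$ then $h_2^{-1}h_1 = i_2 i_1^{-1}\in H\cap I=1$. So set-theoretically $G\cong H\times I$, even though $H$ is not assumed normal. Both parts of the lemma will drop out of this unique factorization.

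For the first claim, fix a subgroup $W$ with $H\le W\le G$, set $J=W\cap I$, and pick $w\in W$. Writing $w=hi$ with $h\in H$, $i\in I$, we see that $i=h^{-1}w$ lies in $W$ (since $h\in H\subseteq W$ and $w\in W$), hence $i\in W\cap I=J$, so $w\in HJ$. This gives $W\subseteq HJ$, and the reverse inclusion $HJ\subseteq W$ is immediate from $H,J\subseteq W$.

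For the second claim, the equivalence that $HJ$ is a subgroup if and only if $HJ=JH$ is a standard elementary fact: if $HJ$ is a subgroup then $HJ=(HJ)^{-1}=J^{-1}H^{-1}=JH$, and conversely $HJ=JH$ makes $HJ$ closed under both products and inverses. Assuming $HJ=JH$, the order formula $\abs{HJ}=\abs{H}\cdot\abs{J}/\abs{H\cap J}$ together with $H\cap J\subseteq H\cap I=1$ yields $\abs{HJ}=\abs{H}\cdot\abs{J}$, and therefore $\abs{HJ\col H}=\abs{J}$.

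I do not expect a real obstacle here: the entire statement is a formal consequence of the factorization $G=HI$ with trivial intersection, combined with the standard product-of-subgroups criterion. The only thing to be careful about is resisting the temptation to assume normality of either factor, which the lemma does not grant.
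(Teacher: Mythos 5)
Your proof is correct, and indeed the paper states this lemma without proof (it is introduced as ``the following simple lemma''), so there is no alternative argument to compare against. Your argument via the unique factorization $g = hi$ forced by $G=HI$ and $H\cap I=1$, together with the standard subgroup-product criterion and the order formula $\abs{HJ}=\abs{H}\abs{J}/\abs{H\cap J}$, is exactly the expected routine verification; you are also right that cyclicity of $I$ plays no role in this particular lemma (it is used later, e.g.\ in Corollary~\ref{L:G=HIcor}).
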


\begin{cor} \label{L:G=HIcor}
  For groups $W_1$ and $W_2$ between $H$ and $G$, write $J_i:=W_i\cap I$;
  then
  \begin{enumerate}
    \item[(\thethm.1)] $\gen{W_1,W_2}=H J_1J_2$.
    \item[(\thethm.2)] $\abs{\gen{W_1,W_2}\col H}=
           \lcm(\abs{W_1\col H},\abs{W_2\col H})$.
    \item[(\thethm.3)] $\abs{G\col \gen{W_1,W_2}}=
           \gcd(\abs{G\col W_1},\abs{G\col W_2})$.
    \item[(\thethm.4)] $W_1\cap W_2=H(J_1\cap J_2)$.
    \item[(\thethm.5)] $\abs{(W_1\cap W_2)\col H}=
           \gcd(\abs{W_1\col H},\abs{W_2\col H})$.
    \item[(\thethm.6)] If $\abs{W_1}=\abs{W_2}$, then $W_1=W_2$.
    \item[(\thethm.7)] $N_{G}(H)\le N_{G}(W_1)$.
  \end{enumerate}
\end{cor}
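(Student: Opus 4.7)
The plan is to reduce all seven assertions to elementary facts about subgroups of the cyclic group $I$. The key preparatory observation, to be made once at the start, is that by Lemma~\ref{L:G=HI} every group $W$ between $H$ and $G$ has the form $HJ$ for the subgroup $J = W\cap I \leq I$ and satisfies $\abs{W\col H} = \abs{J}$; moreover, since $I$ is cyclic, its subgroups commute pairwise, are determined by their orders, and satisfy $\abs{J_1 J_2} = \lcm(\abs{J_1},\abs{J_2})$ and $\abs{J_1\cap J_2}=\gcd(\abs{J_1},\abs{J_2})$.

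First I would establish assertion~(1). The set $HJ_1J_2$ is closed under multiplication because $J_i H = HJ_i$ for each $i$ (since $HJ_i$ is a subgroup) and $J_1J_2 = J_2J_1$ (since $I$ is abelian); thus $HJ_1J_2$ is a subgroup, and since it is clearly the smallest subgroup containing $W_1\cup W_2$, it equals $\gen{W_1,W_2}$. Claims~(2) and~(3) then follow immediately from $\abs{HJ_1J_2\col H} = \abs{J_1J_2}$ together with $\abs{G} = \abs{H}\cdot\abs{I}$, the latter giving $\abs{G\col W_i} = \abs{I\col J_i}$.

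Next I would handle~(4) via the unique factorization of elements of $HI$ afforded by $H\cap I=1$: if $w\in W_1\cap W_2$ is written as $h_1 j_1 = h_2 j_2$ with $h_i\in H$ and $j_i\in J_i$, then $h_2^{-1}h_1 = j_2 j_1^{-1}\in H\cap I = 1$ forces $j_1=j_2\in J_1\cap J_2$, giving $W_1\cap W_2 \subseteq H(J_1\cap J_2)$; the reverse inclusion is immediate, and~(5) follows from $\abs{H(J_1\cap J_2)\col H} = \abs{J_1\cap J_2}$.

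Finally,~(6) follows from $\abs{W_1}=\abs{W_2}$ forcing $\abs{J_1}=\abs{J_2}$ and hence $J_1=J_2$ by uniqueness of cyclic subgroups of each order. The proof of~(7) depends on~(6), so the clauses should not be tackled strictly in the listed order: for $n\in N_G(H)$, the conjugate $n W_1 n^{-1}$ is a subgroup of $G$ containing $nHn^{-1}=H$ and has the same cardinality as $W_1$, so~(6) gives $nW_1n^{-1} = W_1$. Every clause is essentially bookkeeping; the only points requiring care are verifying closure of $HJ_1J_2$ under multiplication in~(1), and recognizing the logical dependence of~(7) on~(6).
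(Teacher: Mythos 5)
Your proof is correct and follows essentially the same route as the paper's: identify each intermediate group with its cyclic part via Lemma~\ref{L:G=HI}, check that $HJ_1J_2$ and $H(J_1\cap J_2)$ are the relevant subgroups, and reduce (6) and (7) to the uniqueness of subgroups of each order in $I$. The only cosmetic difference is in (4), where the paper applies Lemma~\ref{L:G=HI} directly to $W_1\cap W_2$ (using $(W_1\cap W_2)\cap I = J_1\cap J_2$) while you spell out the unique $H$--$I$ factorization; both amount to the same observation.
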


\begin{proof}
  We have $HJ_1J_2=J_1HJ_2=J_1J_2H$, so $\gen{W_1,W_2}=HJ_1J_2$, which implies
  (\ref{L:G=HIcor}.1), (\ref{L:G=HIcor}.2) and (\ref{L:G=HIcor}.3).  Since
  $W_1\cap W_2\cap I=J_1\cap J_2$, we obtain (\ref{L:G=HIcor}.4) and
  (\ref{L:G=HIcor}.5).  For (\ref{L:G=HIcor}.6), note that $I$ has at most one
  subgroup of a given order.
  Finally, for $\tau\in N_G(H)$ we have $H\le W_1^\tau\le G$, so
  (\ref{L:G=HIcor}.7) follows from (\ref{L:G=HIcor}.6).
\end{proof}

\begin{remark}
Corollary~\ref{chains} is implicit in \cite[\S 2]{Ritt} and explicit
in \cite[\S 3]{Levi}.  Corollary~\ref{fin} is due to Ritt \cite[p.~55]{Ritt}.
\end{remark}


\subsection{Greatest common divisors and Ritt's first theorem}
\label{subsec:Ritt1}

In this subsection we prove Ritt's result (Theorem~\ref{Ritt1}) describing how
to obtain any complete decomposition of $f$ from any other.  We then deduce
that the sequence of monodromy groups of the indecomposables in a
complete decomposition of $f$ is uniquely determined (up to permutation) by
$f$.  Our first result describes the left and right greatest common divisors of
two decompositions.

\begin{lemma} \label{gcdlem}
  If $a,b,c,d\in\C[X]\setminus\C$ satisfy $a\circ b = c\circ d$,
  then there exist $\hat a,\hat b,\hat c,\hat d,g,h\in\C[X]$ such that
  \begin{itemize}
    \item $g\circ \hat a = a$,\, $g\circ \hat c=c$,\,
      $\deg(g)=\gcd(\deg(a),\deg(c))$;
    \item $\hat b\circ h = b$,\, $\hat d\circ h=d$,\,
      $\deg(h)=\gcd(\deg(b),\deg(d))$; and
    \item $\hat a\circ \hat b = \hat c\circ \hat d$.
  \end{itemize}
\end{lemma}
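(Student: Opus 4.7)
The plan is to pass through the Galois-theoretic dictionary developed in this section. Setting $f := a\circ b = c\circ d$ and invoking the notation at the start of the section, Lemma~\ref{luroth} associates to the pairs $(a,b)$ and $(c,d)$ the intermediate fields $E_1 := \C(b(x))$ and $E_2 := \C(d(x))$ of $\C(x)/\C(t)$. These fields are cut out, via the Galois correspondence, by subgroups $W_1,W_2$ between $H$ and $G$ with $\abs{G\col W_i}\in\{\deg(a),\deg(c)\}$ and $\abs{W_i\col H}\in\{\deg(b),\deg(d)\}$.

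The key observation is that common left divisors of $a$ and $c$ correspond bijectively to intermediate fields contained in $E_\cap := E_1 \cap E_2$, while common right divisors of $b$ and $d$ correspond to intermediate fields containing $E_\cdot := E_1 \cdot E_2$; so the maximal left and right gcd's come from $E_\cap$ and $E_\cdot$ themselves. Under the Galois correspondence $E_\cap$ is fixed by $\gen{W_1,W_2}$ and $E_\cdot$ by $W_1 \cap W_2$, so Corollary~\ref{L:G=HIcor}.3 and Corollary~\ref{L:G=HIcor}.5 give
\[
  [E_\cap\col\C(t)] = \gcd(\deg(a),\deg(c)) \quad\text{and}\quad [\C(x)\col E_\cdot] = \gcd(\deg(b),\deg(d)),
\]
which will be the required degrees of $g$ and $h$.

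To produce actual polynomials, I apply Lemma~\ref{luroth} twice at $E_\cap$ to obtain $E_\cap = \C(\hat a(b(x)))$ with $a = g\circ\hat a$, and $E_\cap = \C(\hat c_0(d(x)))$ with $c = g_0\circ\hat c_0$, for some polynomials $g,\hat a,g_0,\hat c_0$. Since these describe the same field, Lemma~\ref{luroth} forces $\hat c_0\circ d = \ell\circ\hat a\circ b$ for some linear $\ell$; after absorbing $\ell$ into $\hat a$ and $\ell\iter{-1}$ into $g$, and setting $\hat c := \hat c_0$, the identity $f = g\circ\hat c\circ d = g_0\circ\hat c\circ d$ together with right-cancellation by the nonconstant polynomial $\hat c\circ d$ gives $g = g_0$, so simultaneously $a = g\circ\hat a$, $c = g\circ\hat c$, and $\hat a\circ b = \hat c\circ d$. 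Analogously, Lemma~\ref{luroth} supplies $h\in\C[X]$ with $E_\cdot = \C(h(x))$; then $b(x),d(x)\in\C(h(x))$, and since $b(x), d(x), h(x)$ each have their only pole at $x=\infty$ while $h$ hits every value in $\C$, the at-infinity argument used to prove Lemma~\ref{luroth} forces the resulting rational representations $b = \hat b\circ h$ and $d = \hat d\circ h$ to have $\hat b,\hat d\in\C[X]$. Substituting into $\hat a\circ b = \hat c\circ d$ and right-cancelling $h$ yields $\hat a\circ\hat b = \hat c\circ\hat d$.

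I expect the main obstacle to be the linear-polynomial bookkeeping in the third paragraph: Lemma~\ref{luroth} determines polynomial factorizations only up to equivalence, and one must absorb the resulting linear ambiguities in a consistent way so that $a = g\circ\hat a$ and $c = g\circ\hat c$ hold with the \emph{same} $g$. Once that accounting is arranged, the remaining identity $\hat a\circ\hat b = \hat c\circ\hat d$ is forced by cancellation in the composition monoid, and the degree formulas come straight from Corollary~\ref{L:G=HIcor}.
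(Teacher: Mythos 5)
Your proposal is correct and takes essentially the same route as the paper: reduce to the Galois correspondence between decompositions and intermediate fields via Lemma~\ref{luroth}, identify $E_1\cap E_2$ and $E_1E_2$ with $\gen{W_1,W_2}$ and $W_1\cap W_2$, get the degrees from (\ref{L:G=HIcor}.3) and (\ref{L:G=HIcor}.5), and translate back to polynomial factorizations. The only superficial difference is bookkeeping: the paper invokes Corollary~\ref{chains} to extract the two chains of fields simultaneously (so the common factors $h$ and $\hat a$ are shared from the outset), whereas you apply L\"uroth at $E_\cap$ separately for $(a,b)$ and $(c,d)$ and then reconcile the two left factors by absorbing a linear and cancelling on the right — a valid rearrangement of the same argument.
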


\begin{proof}
  Let $a,b,c,d\in\C[X]\setminus\C$ satisfy $a\circ b = c\circ d = f$.
  Let $W_1$ and $W_2$ be the subgroups of $G$ fixing $b(x)$ and $d(x)$,
  respectively, so $H\le W_1,W_2\le G$.  Putting $W:=\gen{W_1,W_2}$,
  Corollary~\ref{chains} implies that the chain of groups
  $H\le W_1\cap W_2\le W_1\le W\le G$
  corresponds to the chain of fields
  $\C(x) \ge \C(h(x)) \ge \C(b(x)) \ge \C(\hat a(b(x))) \ge \C(f(x))$
  with $\hat a,h\in\C[X]$, and by Lemma~\ref{luroth} we have $b=\hat b\circ h$
  and $a=g\circ \hat a$ for some $\hat b,g\in\C[X]$.  Likewise, the chain of
  groups $H\le W_1\cap W_2\le W_2\le W\le G$ corresponds to the chain of fields
  $\C(x) \ge \C(h(x)) \ge \C(d(x)) \ge \C(\hat a(b(x)) \ge \C(f(x))$, so
  $d=\hat d\circ h$ and $\hat a\circ b=\hat c\circ d$ with
  $\hat c,\hat d\in\C[X]$, whence $c=g\circ \hat c$ and
  $\hat a\circ \hat b=\hat c\circ \hat d$.  Finally, the
  statements about degrees follow from (\ref{L:G=HIcor}.3) and
  (\ref{L:G=HIcor}.5).
\end{proof}

\begin{cor} \label{multipledeg}
  Suppose $a,b,c,d\in\C[X]\setminus\C$ satisfy $a\circ b = c\circ d$.
  \begin{enumerate}
    \item[(\thethm.1)] If $\deg(c)\mid\deg(a)$, then $a=c\circ \hat a$ for some
          $\hat a\in\C[X]$.
    \item[(\thethm.2)] If $\deg(d)\mid\deg(b)$, then $b=\hat b\circ d$ for some
          $\hat b\in\C[X]$.
    \item[(\thethm.3)] If $\deg(a)=\deg(c)$, then there is a linear
          $\ell\in\C[X]$ such that $a=c\circ\ell$ and $b=\ell\iter{-1}\circ d$.
  \end{enumerate}
\end{cor}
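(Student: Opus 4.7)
The plan is to deduce all three parts from Lemma~\ref{gcdlem}, which when applied to $a\circ b = c\circ d$ yields polynomials $\hat a,\hat b,\hat c,\hat d,g,h\in\C[X]$ satisfying $a=g\circ\hat a$, $c=g\circ\hat c$, $b=\hat b\circ h$, $d=\hat d\circ h$, $\hat a\circ\hat b=\hat c\circ\hat d$, with $\deg g=\gcd(\deg a,\deg c)$ and $\deg h=\gcd(\deg b,\deg d)$. In each part the strategy will be the same: use the degree hypothesis to force one of the outer factors produced by Lemma~\ref{gcdlem} to be linear, and then rearrange.

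For part~(1), $\deg c\mid \deg a$ gives $\deg g=\gcd(\deg a,\deg c)=\deg c$. Since $c=g\circ\hat c$, the factor $\hat c$ must be linear, and then
\[
  a=g\circ\hat a=c\circ\hat c\iter{-1}\circ\hat a
\]
exhibits $a$ as $c$ composed with a polynomial. Part~(2) is entirely symmetric: $\deg d\mid\deg b$ forces $\deg h=\deg d$, hence $\hat d$ is linear, and then $b=\hat b\circ h=(\hat b\circ\hat d\iter{-1})\circ d$ is the required factorization.

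For part~(3), the assumption $\deg a=\deg c$ makes both $\hat a$ and $\hat c$ linear (each of degree $\deg a/\deg g=1$). Setting $\ell:=\hat c\iter{-1}\circ\hat a$, which is linear, the argument in part~(1) gives $a=c\circ\ell$. The main obstacle is that we cannot conclude $b=\ell\iter{-1}\circ d$ by left-cancelling $c$ from $c\circ\ell\circ b=c\circ d$, because polynomial composition is not left-cancellative in general (for instance $X^2\circ X^2=X^2\circ(-X^2)$). Instead, I would exploit the additional output $\hat a\circ\hat b=\hat c\circ\hat d$ of Lemma~\ref{gcdlem}. Since $\hat a$ and $\hat c$ are linear, this rearranges to
\[
  \hat b=\hat a\iter{-1}\circ\hat c\circ\hat d=\ell\iter{-1}\circ\hat d,
\]
and composing on the right with $h$ gives $b=\hat b\circ h=\ell\iter{-1}\circ\hat d\circ h=\ell\iter{-1}\circ d$, completing the proof.
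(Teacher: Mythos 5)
Your proposal is correct and follows exactly the intended route: all three parts fall out of Lemma~\ref{gcdlem} by observing that the degree hypothesis forces the relevant factor ($\hat c$, $\hat d$, or both $\hat a,\hat c$) to be linear. Your remark in part~(3) — that one cannot simply left-cancel $c$ because composition is not left-cancellative, and must instead invoke the equation $\hat a\circ\hat b=\hat c\circ\hat d$ supplied by the lemma — is exactly the right care to take, and the paper (which leaves this corollary as an immediate consequence of Lemma~\ref{gcdlem}) implicitly relies on the same step.
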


Assertion (\ref{multipledeg}.3) implies that, up to the insertion of
linears and their inverses between consecutive indecomposables, a complete
decomposition is uniquely determined by the sequence of degrees of the involved
indecomposables.  This yields a refinement of Corollary~\ref{fin}.

We now prove Theorem~\ref{Ritt1}.

\begin{proof}[Proof of Theorem~\ref{Ritt1}]
  By Corollary~\ref{chains} and Lemma~\ref{L:G=HI}, the result is a consequence
  of the following lemma about chains of subgroups of $I$.
\end{proof}

\begin{lemma} \label{Ritt1gp}
  Let $\cJ$ be a set of subgroups of a finite cyclic group $I$, and assume
  that $1,I\in\cJ$ and $\cJ$ is closed under intersections and products.  Let
  $1=A_0<A_1<\dots<A_r=I$ and $1=B_0<B_1<\dots<B_s=I$ be two maximal
  increasing chains of groups in $\cJ$.  Then one can pass from the first chain
  to the second via finitely many steps, each of which involves replacing a
  chain $1=C_0<C_1<\dots<C_r=I$ by a chain $1=D_0<D_1<\dots<D_r=I$ where
  $D_i=C_i$ for all $i$ not equal to a single value $j$ (with $0<j<r$).
\end{lemma}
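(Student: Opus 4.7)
The plan is to induct on $|I|$. Since $I$ is cyclic, subgroups of $I$ correspond bijectively to divisors of $|I|$, with intersection and product corresponding to $\gcd$ and $\lcm$; in particular, a subgroup of $I$ is determined by its order, and $\cJ$ sits as a sublattice of a distributive lattice containing $1$ and $I$. The base case $|I|=1$ is trivial, so assume $|I|>1$.

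For the inductive step I first dispose of the subcase $A_1=B_1$: pass to the sublattice $\cJ' := \{J/A_1\colon J\in\cJ,\;A_1\le J\}$ of subgroups of the cyclic group $I/A_1$, which is again closed under intersections and products and contains its top and bottom. The chains $(A_1,\dots,A_r)$ and $(B_1,\dots,B_s)$ descend to two maximal chains in $\cJ'$, and since $|I/A_1|<|I|$, induction yields $r=s$ together with a sequence of elementary moves connecting them; lifting each move back produces a sequence connecting the original chains with $A_0$ and $A_1$ unchanged.

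If $A_1\ne B_1$, let $C:=A_1B_1\in\cJ$. The crux is to show that $1<A_1<C$ is a maximal chain in $\cJ$ (and similarly $1<B_1<C$): suppose $D\in\cJ$ with $A_1<D<C$; then $D\cap B_1\in\cJ$ sits inside the $\cJ$-atom $B_1$, so equals either $1$ or $B_1$. The case $D\cap B_1=B_1$ forces $C=A_1B_1\le D$, contradicting $D<C$; the case $D\cap B_1=1$ combined with $A_1B_1\le DB_1\le C$ forces $DB_1=C$, whence $|D|\cdot|B_1|=|A_1|\cdot|B_1|$ gives $|D|=|A_1|$ and thus $D=A_1$ by cyclicity of $I$---again a contradiction. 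Given this, I would extend $1<A_1<C$ to a maximal chain $(1,A_1,C,A'_3,\dots,A'_{r'})$ in $\cJ$ with $A'_{r'}=I$, and then concatenate three transformations: transform $(A_0,A_1,\dots,A_r)$ to $(1,A_1,C,A'_3,\dots,I)$ by applying the $A_1=B_1$ subcase above $A_1$ (which also forces $r=r'$); apply one elementary move at position~$1$ to replace $A_1$ by $B_1$, valid because $1<B_1<C$ lies in $\cJ$; and transform $(1,B_1,C,A'_3,\dots,I)$ into $(B_0,B_1,\dots,B_s)$ by the same subcase applied above $B_1$ (forcing $s=r'$). The main obstacle is the intermediate-element claim, which essentially uses cyclicity of $I$; once this atom-covering property is in hand, the rest is bookkeeping.
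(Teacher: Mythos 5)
Your proof is correct, and it is genuinely dual to the paper's. The paper works at the \emph{top} of the chains: it reduces to the subcase $A_{r-1}=B_{s-1}$ by restricting $\cJ$ to subgroups of the coatom $A_{r-1}$ (a smaller cyclic group), and when $A_{r-1}\ne B_{s-1}$ it proves the covering relation $A_{r-1}\cap B_{s-1}<A_{r-1}$ via an index count, then builds a maximal chain through $A_{r-1}\cap B_{s-1}$ and performs a single swap of $A_{r-1}$ for $B_{s-1}$. You work at the \emph{bottom}: you reduce to the subcase $A_1=B_1$ by passing to the quotient lattice inside $I/A_1$, and when $A_1\ne B_1$ you prove the covering relation $A_1<A_1B_1$ by an index count, then route both chains through $C=A_1B_1$. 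The two index arguments are mirror images of each other (meets and subgroups versus joins and quotients), and both rely on cyclicity via the fact that $I$ has at most one subgroup of each order. Your version quietly uses $A_1\cap B_1=1$ (which holds because $A_1$ and $B_1$ are distinct atoms of $\cJ$, hence of $I$) and also requires checking that $\cJ'$ inherits closure under intersections and products, both of which are routine; aside from those elementary verifications, the argument is complete. Neither direction is clearly simpler — the paper's restriction to a subgroup avoids the quotient-lattice bookkeeping, while your version is perhaps more natural if one prefers to think of the atoms of $\cJ$ as the building blocks.
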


\begin{proof}
  We proceed by induction on $\abs{I}$.  So suppose the result holds for any
  cyclic group of order less than $\abs{I}$.  Let $\cA=(A_0,A_1,\dots,A_r)$
  and $\cB=(B_0,\dots,B_s)$ be maximal chains as prescribed.  By the inductive
  hypothesis, the conclusion holds for any two chains containing $A_{r-1}$.  So
  suppose $A_{r-1}\ne B_{s-1}$; maximality of the chains implies
  $A_{r-1}B_{s-1}=I$, so there is no group in $\cJ$ properly between
  $A_{r-1}\cap B_{s-1}$ and $A_{r-1}$ (since $A_{r-1}\cap B_{s-1}<J<A_{r-1}$
  implies $\abs{JB_{s-1}\col B_{s-1}}=\abs{J\col J\cap B_{s-1}}=
  \abs{J\col A_{r-1}\cap B_{s-1}}$).  Let
  $1=U_0<U_1<\dots<U_k=A_{r-1}\cap B_{s-1}$ be a maximal increasing chain of
  groups in $\cJ$ contained in $A_{r-1}\cap B_{s-1}$; then
  $\cU:=(U_0,U_1,\dots,U_k,A_{r-1},I)$ is a maximal chain in $\cJ$.  By
  inductive hypothesis, we can pass from $\cA$ to $\cU$ by steps of the
  required type.  In one more such step we replace $\cU$ by
  $\cV:=(U_0,U_1,\dots,U_k,B_{s-1},I)$.  Finally, by inductive hypothesis we
  can pass from $\cV$ to $\cB$ by steps of the required type, and the result
  follows.
\end{proof}

In light of Theorem~\ref{Ritt1}, invariants of pairs of Ritt neighboring
complete decompositions of $f$ are invariants of any pair of complete
decompositions of $f$.
Lemma~\ref{gcdlem} implies the following result about the degrees of the
indecomposables in a pair of Ritt neighbors.

\begin{cor} \label{Ritt move lemma}
  Suppose indecomposable $a,b,c,d\in\C[X]$ satisfy $a\circ b=c\circ d$.
  Then either there is a linear $\ell$ such that $a=c\circ\ell$ and
  $b=\ell\iter{-1}\circ d$, or $\gcd(\deg(a),\deg(c))=\gcd(\deg(b),\deg(d))=1$
  (in which case $\deg(a)=\deg(d)$ and\/ $\deg(b)=\deg(c)$).
\end{cor}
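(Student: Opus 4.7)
The plan is to deduce this corollary directly from Lemma \ref{gcdlem} together with Corollary \ref{multipledeg}, by exploiting the indecomposability of $a,b,c,d$ to force the ``left gcd'' $g$ and the ``right gcd'' $h$ to each be either linear or of the same degree as the factor they divide.

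First I would apply Lemma \ref{gcdlem} to the equation $a\circ b=c\circ d$ to obtain $\hat a,\hat b,\hat c,\hat d,g,h\in\C[X]$ with $g\circ\hat a=a$, $g\circ\hat c=c$, $\hat b\circ h=b$, $\hat d\circ h=d$, and with $\deg(g)=\gcd(\deg(a),\deg(c))$ and $\deg(h)=\gcd(\deg(b),\deg(d))$. Since $a$ is indecomposable, the factorization $g\circ\hat a=a$ forces either $\deg(g)=1$ or $\deg(\hat a)=1$; in the latter case $\deg(g)=\deg(a)$. The analogous statement holds for $c$, $b$, $d$.

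Now I would split into two cases according to whether $\deg(g)>1$ or $\deg(g)=1$. If $\deg(g)>1$, then indecomposability of $a$ and $c$ forces $\deg(\hat a)=\deg(\hat c)=1$, hence $\deg(a)=\deg(c)$, and Corollary \ref{multipledeg} (part 3) produces the desired linear $\ell$ with $a=c\circ\ell$ and $b=\ell\iter{-1}\circ d$. If instead $\deg(g)=1$, then $\gcd(\deg(a),\deg(c))=1$, and it remains to show $\gcd(\deg(b),\deg(d))=1$. For this I would observe that if $\deg(h)>1$, then indecomposability of $b$ and $d$ gives $\deg(\hat b)=\deg(\hat d)=1$, whence $\deg(b)=\deg(d)$; combined with $\deg(a)\deg(b)=\deg(c)\deg(d)$ this forces $\deg(a)=\deg(c)$, contradicting $\gcd(\deg(a),\deg(c))=1$ since both degrees exceed $1$.

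Finally, once $\gcd(\deg(a),\deg(c))=\gcd(\deg(b),\deg(d))=1$ is established, the equalities $\deg(a)=\deg(d)$ and $\deg(b)=\deg(c)$ follow from the degree identity $\deg(a)\deg(b)=\deg(c)\deg(d)$ by an elementary coprimality argument. There is no serious obstacle here: the only mild subtlety is making sure to use the full force of indecomposability (degree strictly greater than $1$) in the contradiction step, since otherwise the argument ruling out $\deg(h)>1$ would fail.
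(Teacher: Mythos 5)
Your proposal is correct and follows the paper's intended route: the paper states only that Corollary~\ref{Ritt move lemma} ``follows from Lemma~\ref{gcdlem},'' and your argument supplies precisely that deduction, using indecomposability to force $\deg(g)\in\{1,\deg(a)\}$ (and likewise for $h$) and then invoking (\ref{multipledeg}.3) in the first case. The contradiction step ruling out $\deg(g)=1$ together with $\deg(h)>1$ is sound, and the final coprimality argument for $\deg(a)=\deg(d)$, $\deg(b)=\deg(c)$ is the expected one.
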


In combination with Theorem~\ref{Ritt1}, this result shows that the sequence
of degrees of the indecomposables in a complete decomposition of $f$ is
uniquely determined (up to permutation) by $f$:

\begin{cor} \label{degrees}
  Pick $f\in\C[X]$ with $\deg(f)>1$. Let $(u_1,\dots, u_r)$ and
  $(v_1,\dots, v_s)$ be complete decompositions of $f$. Then
  $r=s$, and there is a permutation $\chi$ of the set $\{1,2,\dots,r\}$
  such that $u_i$ and $v_{\chi(i)}$ have the same degree for all $1\le i\le r$.
\end{cor}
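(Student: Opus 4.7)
The plan is to combine Theorem~\ref{Ritt1} with Corollary~\ref{Ritt move lemma}: the first says any two complete decompositions of $f$ are connected by a chain of Ritt neighbors, and the second tells us exactly what a Ritt move does to the pair of degrees it touches. So all I need to verify is that each individual Ritt move preserves the length of the decomposition and the multiset of degrees; the corollary then follows by iterating.

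First I would apply Theorem~\ref{Ritt1} to the two given decompositions $(u_1,\dots,u_r)$ and $(v_1,\dots,v_s)$, obtaining a finite sequence $\mathcal S$ of complete decompositions of $f$, beginning with $(u_1,\dots,u_r)$ and ending with $(v_1,\dots,v_s)$, in which consecutive entries are Ritt neighbors. By the very definition of Ritt neighbor, consecutive entries in $\mathcal S$ have the same length, so by transitivity $r=s$, and consecutive entries differ only in two adjacent positions $i,i+1$ where $u_i\circ u_{i+1}=v_i\circ v_{i+1}$ and all four of $u_i,u_{i+1},v_i,v_{i+1}$ are indecomposable.

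Next I would invoke Corollary~\ref{Ritt move lemma} on this pair. It gives two alternatives: either there is a linear $\ell$ with $u_i=v_i\circ\ell$ and $u_{i+1}=\ell\iter{-1}\circ v_{i+1}$, in which case $\deg(u_i)=\deg(v_i)$ and $\deg(u_{i+1})=\deg(v_{i+1})$; or $\deg(u_i)=\deg(v_{i+1})$ and $\deg(u_{i+1})=\deg(v_i)$. In both cases the unordered pair $\{\deg(u_i),\deg(u_{i+1})\}$ equals $\{\deg(v_i),\deg(v_{i+1})\}$, so the full multiset $\{\deg(u_1),\dots,\deg(u_r)\}$ is unchanged by the move. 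A straightforward induction along $\mathcal S$ then shows that the multiset of degrees is the same for the initial and terminal decompositions, which yields the desired permutation $\chi$.

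There is essentially no obstacle here: both ingredients have already been proved, and the logical step of transporting a local invariant across a Ritt chain to get a global invariant is immediate. The only mild care needed is to notice that the two cases of Corollary~\ref{Ritt move lemma} must both be handled, since in the second case the two degrees get swapped rather than fixed individually; but either way the multiset is preserved, which is all that Corollary~\ref{degrees} demands.
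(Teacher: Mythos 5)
Your proposal is correct and is essentially the paper's own argument: the paper explicitly introduces Corollary~\ref{degrees} with the sentence ``In combination with Theorem~\ref{Ritt1}, this result shows that the sequence of degrees \dots is uniquely determined (up to permutation),'' where ``this result'' is Corollary~\ref{Ritt move lemma}. The only cosmetic imprecision is that Theorem~\ref{Ritt1} guarantees $\cU,\cV\in\mathcal S$ but not that they sit at the endpoints of the chain; since your invariant is preserved at every consecutive pair, this does not affect the conclusion.
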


We now show that $\chi$ can be chosen so that $u_i$ and $v_{\chi(i)}$ share
a finer invariant than the degree: we can require them
to have the same monodromy group.  Note that the monodromy group is a
permutation group whose degree equals the degree of the polynomial, so this
result refines Corollary~\ref{degrees}.

\begin{defi}
  Let $G$ and $\tilde G$ be permutation groups acting on sets $\Omega$ and
  $\tilde\Omega$, respectively. We say that $G$ and $\tilde G$ are isomorphic
  as permutation groups if there is a group isomorphism
  $\phi\colon G\to\tilde G$ and a bijection $\psi\colon\Omega\to\tilde\Omega$
  such that $\psi(\omega^{\tau})=\psi(\omega)^{\phi(\tau)}$ for each
  $\omega\in \Omega$ and $\tau\in G$.
\end{defi}

\begin{thm} \label{monthm}
  Suppose $a,b,c,d\in \C[X]\setminus\C$ satisfy
  $a\circ b = c\circ d$ and
  $\gcd(\deg(a),\deg(c))=1=\gcd(\deg(b),\deg(d))$.  Then $\Mon(a)$ and
  $\Mon(d)$ are isomorphic permutation groups, and so are $\Mon(b)$ and
  $\Mon(c)$.
\end{thm}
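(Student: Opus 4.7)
The plan is to translate everything into the group-theoretic dictionary of Section~\ref{sec: Ritt1} and then exploit coprimality to identify the two monodromy groups as the same permutation action. Set $f = a\circ b = c\circ d$, let $L$, $x$, $G$, $H$, $I$ be the standing objects associated with $f$, and put $W_1 := \Gal(L/\C(b(x)))$ and $W_2 := \Gal(L/\C(d(x)))$; Lemma~\ref{luroth} gives $H\le W_1,W_2\le G$ with $|G:W_1|=\deg(a)$, $|W_1:H|=\deg(b)$, and analogously for $W_2$. The coprimality hypotheses, via parts (\ref{L:G=HIcor}.3) and (\ref{L:G=HIcor}.5) of Corollary~\ref{L:G=HIcor}, yield
\[
  G = W_1 W_2 \quad\text{and}\quad W_1\cap W_2 = H,
\]
and in particular $\deg(a)=\deg(d)=:m$ and $\deg(b)=\deg(c)=:n$.

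Next I identify the two permutation actions explicitly. Let $N_1$ be the core of $W_1$ in $G$, so that $\Mon(a)=G/N_1$ acts on the $m$-element set $G/W_1$; let $M_2$ be the core of $H$ in $W_2$, so that $\Mon(d)=W_2/M_2$ acts on the $m$-element set $W_2/H$. The map $\phi\colon W_2/H\to G/W_1$ sending $wH\mapsto wW_1$ is a $W_2$-equivariant bijection: well-definedness and injectivity follow from $W_1\cap W_2=H$, surjectivity from $G=W_2W_1$, and equivariance is immediate. Under the identification afforded by $\phi$, the image of $W_2\to\mathrm{Sym}(G/W_1)$ is simultaneously $\Mon(d)=W_2/M_2$ and $W_2N_1/N_1\le G/N_1=\Mon(a)$. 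The theorem therefore reduces to proving $W_2N_1=G$.

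The main obstacle is this last identity, and I would overcome it by bringing in the cyclic inertia subgroup $I$. Writing $J_i:=W_i\cap I$ so that $W_i=HJ_i$ by Lemma~\ref{L:G=HI}, the key observation is that the transitive action of the abelian group $I$ on $G/W_1$ is $I$-equivariantly identified via $iJ_1\mapsto iW_1$ with the action of $I$ on $I/J_1$, whose kernel is $J_1$. Hence $I\cap N_1=J_1$, and in particular $J_1\le N_1$. Since $|J_1|=n$ and $|J_2|=m$ are coprime inside the cyclic group $I$ of order $mn$, we have $J_1J_2=I$, and therefore
\[
  W_2N_1\supseteq(HJ_2)\cdot J_1=H(J_1J_2)=HI=G,
\]
as required. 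Thus $\Mon(a)\cong\Mon(d)$ as permutation groups, and $\Mon(b)\cong\Mon(c)$ follows by the symmetric argument interchanging $W_1$ and $W_2$.
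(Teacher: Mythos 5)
Your proof is correct and follows essentially the same Galois-theoretic route as the paper, which also introduces the stabilizers $U=W_1$, $V=W_2$, and $H$, derives $U\cap V=H$ and $G=UV$ from coprimality, shows $U\cap I\le N$ and hence $VN=G$, and then matches the two coset actions via a natural isomorphism $V/C\cong G/N$. Your streamlining --- obtaining $M_2=W_2\cap N_1$ for free from the $W_2$-equivariant bijection $\phi$ rather than by explicitly unwinding the core as $C=\bigcap_{v\in V}(U\cap V)^v=N\cap V$, and deducing $W_2N_1=G$ directly from $J_1J_2=I$ rather than via the intermediate identity $U=HN$ --- is a cosmetic improvement rather than a different argument, so the two proofs are substantively identical.
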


\begin{proof}
  Let $x$ be transcendental over $\C$, let $t=a(b(x))$, and let $L$ be a normal
  closure of $\C(x)/\C(t)$. Set $G=\Gal(L/\C(t))$.  Let $U$, $V$, and $H$ be
  the stabilizers in $G$ of $b(x)$, $d(x)$, and $x$, respectively.
  
  Let $N:=\bigcap_{\tau\in G}U^{\tau}$ be the core of $U$ in $G$; then $N$ is
  the kernel of the action of $G$ on the set $G/U$ of right cosets of $U$ in
  $G$.  Thus $\Mon(a)$ is isomorphic to $G/N$ with respect to this action.  Let
  $C:=\bigcap_{v\in V} H^v$ be the core of $H$ in $V$; then $V/C$, in its
  action on the coset space $V/H$, is isomorphic to $\Mon(d)$.
  
  Recall that $G=HI$ with $I$ cyclic.  Since $\abs{U\col H}=\deg(b)$ is coprime
  to $\abs{V\col H}=\deg(d)$, we have $U\cap V=H$.  Then
  $\abs{G\col U}=\deg(a)=\deg(d)=\abs{V\col H}$ implies $G=UV$.  Since $G=UI$,
  we have $N=\bigcap_{\tau\in G}U^{\tau}=\bigcap_{\tau\in I}U^{\tau}\ge
  \bigcap_{\tau\in I}(U\cap I)^{\tau}=U\cap I$. From $U\cap I\le N$ we get
  $U=H(U\cap I)\le HN$, whence $U=HN$ and $VN=VHN=VU=G$.
  
  Since $H=U\cap V$ and $G=UV$, we have
  \[
    C=\bigcap_{v\in V}H^v=\bigcap_{v\in V}(U\cap V)^v
     =(\bigcap_{v\in V}U^v)\cap V=(\bigcap_{v\in UV}U^v)\cap V=N\cap V.
  \]
  Hence the natural map $V\to G/N$ is surjective with kernel $N\cap V=C$, and
  thus induces a natural isomorphism $V/C\to G/N$.  This isomorphism maps
  $H/C$ to $HN/N=U/N$, so $V/C$ and $G/N$ are isomorphic permutation groups
  with respect to their actions on the coset spaces $V/H$ and $G/U$,
  respectively.  Thus $\Mon(d)$ and $\Mon(a)$ are isomorphic as permutation
  groups.
  
  The isomorphy of $\Mon(b)$ and $\Mon(c)$ follows by symmetry.
\end{proof}

Theorem~\ref{monRitt1} follows from Theorem~\ref{Ritt1} and the previous
result.

\begin{remark}
  Letting $a:=X^i h(X)^n$ and $d:=X^i h(X^n)$ with $\gcd(i,n)=1$, we have
  $a\circ X^n = X^n \circ d$, so Theorem~\ref{Ritt1} implies
  that $a$ is indecomposable if and only if $d$ is indecomposable.
  This has been observed previously, and has been regarded as mysterious
  (cf.\ \cite[p.~140]{LN} or \cite[p.~128]{BN}).
  It is explained by Theorem~\ref{monthm}, since a polynomial is indecomposable
  precisely when its monodromy group is primitive.
\end{remark}

\begin{remark} \label{BNrem}
  Beardon and Ng \cite{BN} studied the set $\Gamma_0(u)$ of Euclidean
  isometries of a polynomial $u\in\C[X]\setminus\C$, defined as the set of
  linear $\ell\in\C[X]$ for which $u\circ\ell=f$.  Writing
  $\gamma(u):=\abs{\Gamma_0(u)}$, they showed that if $(u_1,\dots,u_r)$ is a
  complete decomposition of $f$ then $(\gamma(u_1),\dots,\gamma(u_r))$
  is uniquely determined (up to permutation) by $f$.  We now deduce this
  from Theorem~\ref{monRitt1}.  Each element of $\Gamma_0(f)$
  is an automorphism of $\C(x)$ which fixes $\C(f(x))$; conversely, any such
  automorphism is a degree-one rational function fixing the unique preimage
  $X=\infty$ of $f=\infty$, and so lies in $\Gamma_0(f)$.  Thus
  $\Gamma_0(f)\cong N_G(H)/H\cong N_G(H)\cap I$ is cyclic.
  If $f$ is indecomposable
  and $\Gamma_0(f)\ne\{X\}$ then $N_G(H)=G$; since $H$
  contains no nontrivial normal subgroups of $G$ (because $L$ is the normal
  closure of $L^H/L^G$), we must have $H=1$ so $G$ is cyclic of order
  $\gamma(f)$.  Thus the Beardon--Ng invariant amounts to the
  subsequence of cyclic groups among the $\Mon(u_i)$.  Moreover, it is easy to
  see (cf.\ Lemma~\ref{cycdi}) that $G$ is cyclic of order $n$ precisely when
  $f=\ell_1\circ X^n \circ \ell_2$ with $\ell_1,\ell_2$ linear;
  this yields all but one of the new results in \cite{BN}.  The remaining
  result is \cite[Thm.~1.2]{BN}, which says
  $\gamma(a\circ b)\mid\gamma(a)\gamma(b)$; the above interpretation
  (and Corollary~\ref{L:G=HIcor}) implies the refinements
  $\gcd(\deg(b),\gamma(a\circ b))=\gamma(b)$ and
  $\lcm(\deg(b),\gamma(a\circ b))\mid \gamma(a)\deg(b)$.
\end{remark}

\begin{remark}
The crux of Lemma~\ref{gcdlem} is implicit in \cite[pp.~59--60]{Ritt};
a preliminary explicit version is
\cite[Thm.~2.2 and Thm.~3.1]{Engstrom}.
Our version first appeared in \cite[p.~334]{Tortrat}.
Assertion (\ref{multipledeg}.3) is due to Ritt \cite[p.~56]{Ritt};
subsequently, Levi \cite[\S 2]{Levi} proved it by comparing coefficients
(an argument also anticipated by Ritt \cite[p.~221]{Ritt2}), and this 
proof later led to fast decomposition algorithms \cite{vzG,KL}.
Corollary~\ref{Ritt move lemma} occurs in \cite[p.~57]{Ritt}.
In case $a,b,c,d$ are indecomposable, Theorem~\ref{monthm} is shown in
the proof of \cite[Thm.~R.2]{Mueller}.
\end{remark}


\subsection{Ritt's second theorem and Ritt moves}

Ritt's second theorem determines all Ritt neighbors, by solving the equation
$a\circ b = c \circ d$ in indecomposable $a,b,c,d\in\C[X]$.
This equation has the trivial solution
$a=c\circ\ell$ and $b=\ell\iter{-1}\circ d$ with $\ell\in\C[X]$ linear;
by Corollary~\ref{Ritt move lemma}, any other solution satisfies
$\gcd(\deg(a),\deg(c))=\gcd(\deg(b),\deg(d))=1$.
Ritt solved the functional equation assuming only this constraint on
the degrees (and not assuming indecomposability):

\begin{thm}[Ritt] \label{Ritt2}
  Suppose $a,b,c,d\in\C[X]\setminus\C$ satisfy $a\circ b = c\circ d$ and
  $\gcd(\deg(a),\deg(c))=\gcd(\deg(b),\deg(d))=1$.  Then there are linear
  $\ell_j\in\C[X]$ such that (after perhaps switching $(a,b)$ and $(c,d)$)
  the quadruple $(\ell_1\circ a\circ\ell_2,\,
  \ell_2\iter{-1}\circ b\circ\ell_3,\,
  \ell_1\circ c\circ\ell_4,\, \ell_4\iter{-1}\circ d\circ\ell_3)$
  has one of the forms
  \begin{gather}
    \tag{\thethm.1} (T_n,\, T_m,\, T_m,\, T_n)\,\quad\text{or} \\
    \tag{\thethm.2} (X^n\!,\, X^s h(X^n),\, X^s h(X)^n\!,\, X^n),
  \end{gather}
 where $m,n>0$ are coprime, $s\ge 0$ is coprime to $n$, and
 $h\in\C[X]\setminus X\C[X]$.
\end{thm}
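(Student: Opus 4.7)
My plan is to prove the theorem by a genus-zero analysis of the fibered curve $C : a(X) = c(Y)$, along the classical Riemann-Hurwitz route alluded to in the introduction and treated in the appendix.

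First, I would reconcile degrees. Writing $n := \deg(a)$ and $m := \deg(c)$, the identity $\deg(a)\deg(b) = \deg(c)\deg(d)$ together with $\gcd(n, m) = 1 = \gcd(\deg(b), \deg(d))$ forces $\deg(d) = n$, $\deg(b) = m$, and $\gcd(m, n) = 1$. I would then consider the affine plane curve $C$ cut out by $a(X) - c(Y) = 0$. The map $t \mapsto (b(t), d(t))$ lands on $C$ because $a(b(t)) = c(d(t))$, and it is birational: otherwise L\"uroth produces a rational $r(t)$ of degree $>1$ with $\C(b(t), d(t)) = \C(r(t))$, giving $b = B(r)$ and $d = D(r)$ for some rational $B, D$, so that $\deg(r)$ divides both $m$ and $n$, contradicting $\gcd(m, n) = 1$. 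Hence $a(X) - c(Y)$ is absolutely irreducible and its smooth projective model $\bar C$ has genus zero.

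Next I would apply Riemann-Hurwitz to the projection $\pi_X : \bar C \to \Line$, which has degree $m$: the ramification index of $\pi_X$ at a point $(x_0, y_0)$ with $x_0 \in \C$ equals the ramification index of $c$ at $y_0$ over $a(x_0)$. Imposing $g(\bar C) = 0$ yields a numerical identity in the ramification data of $a$ and $c$, and the symmetric treatment of $\pi_Y$ yields the analogous identity for $b$ and $d$. Since $b$ and $d$ are polynomials, $t = \infty$ is totally ramified in each, forcing a totally ramified fiber of $\pi_X$ over $X = \infty$ and of $\pi_Y$ over $Y = \infty$, which pins down one piece of the ramification data.

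The crux is then to solve these identities under the constraint $\gcd(m, n) = 1$ and show that only two ramification patterns survive. Either both $a$ and $c$ have just one further branch point at which all ramification is concentrated on a single index, which after a linear change of variable identifies $a$ with $X^n$ and $c$ with $X^m$; or both have tame ramification at two further branch points supported on transpositions, which after linear changes identifies $a$ and $c$ with $T_n$ and $T_m$. In the power case, writing out $X^n \circ b = c \circ d$ with $a = X^n$ forces $b = X^s h(X^n)$ and $c = X^s h(X)^n$ for suitable $h$ and $s$ coprime to $n$. In the Chebyshev case, the parametrization $(b, d)$ of $C$ pulled back through the semiconjugacy $T_n(\cos\theta) = \cos n\theta$ forces $b$ and $d$ themselves to be Chebyshev. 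The main obstacle is the Diophantine case analysis in this combinatorial step: one must rule out spurious ramification patterns (arising, for instance, from three or more branch points with no Chebyshev-type symmetry) using the coprimality and total-ramification constraints; once only the two clean patterns remain, identifying the associated polynomials up to linear equivalence is routine.
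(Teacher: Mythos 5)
Your overall strategy — parametrize the genus-zero curve $a(X)=c(Y)$ by $(b,d)$, apply Riemann--Hurwitz to the two projections, and then solve the resulting ramification constraints — is exactly the route the paper takes in its appendix. The degree reconciliation, irreducibility via the coprimality of $m$ and $n$, and the role of total ramification at infinity are all correct and match the paper.

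The genuine gap is the combinatorial step you describe in one sentence as ``the main obstacle''; in the paper this is an entire standalone result (Lemma A), a delicate Diophantine argument about multisets $A_i, B_i$ of ramification indices subject to three numerical identities. Nothing in your proposal actually rules out the ``spurious ramification patterns,'' and your stated description of the outcome is not correct as written: you say the power case yields ``just one further branch point'' for \emph{both} $a$ and $c$, identifying $a$ with $X^n$ and $c$ with $X^m$. This is false. Only $a$ and $d$ collapse to monomials; $c$ takes the form $X^s h(X)^n$, which for non-constant $h$ has several finite branch points, and the ramification constraint on the relevant $B_i$ is that one index is coprime to $n$ while all others are divisible by $n$ — not that the ramification is concentrated on a single point. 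You then contradict this a sentence later by correctly writing $c = X^s h(X)^n$, which suggests the dichotomy you have in mind for the ramification data is not the right one. Without a correct statement and proof of the multiset lemma, the argument does not close.
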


We will prove Theorem~\ref{Ritt2} in the appendix.

For applications, it is often useful to combine Theorem~\ref{Ritt2} with
Lemma~\ref{gcdlem} in the following manner:

\begin{cor}
  For $a,b,c,d\in\C[X]\setminus\C$ with $\deg(a)\le\deg(c)$, we have
  $a\circ b = c\circ d$ if and only if there exist
  $\hat a,\hat b,\hat c,\hat d,g,h,\ell_1,\ell_2\in\C[X]$ such that $\ell_i$
  is linear and the following three conditions hold:
  \begin{itemize}
    \item $g\circ \hat a = a$,\, $g\circ \hat c=c$,\,
      $\deg(g)=\gcd(\deg(a),\deg(c))$;
    \item $\hat b\circ h = b$,\, $\hat d\circ h=d$,\,
      $\deg(h)=\gcd(\deg(b),\deg(d))$; and
    \item the tuple
      $(\hat a\circ\ell_1,\,\ell_1\iter{-1}\circ \hat b,\,\hat c\circ\ell_2,\,
        \ell_2\iter{-1}\circ \hat d)$
      has the form of either \emph{(\ref{Ritt2}.1)} or \emph{(\ref{Ritt2}.2)}.
  \end{itemize}
\end{cor}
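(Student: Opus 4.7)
The plan is to prove the two directions separately. The ``if'' direction is a direct algebraic verification using the two Ritt identities. The ``only if'' direction combines Lemma~\ref{gcdlem} with Theorem~\ref{Ritt2}, and then folds two of the four linears produced by Ritt's theorem into $g$ and $h$.

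For the ``if'' direction, given the three bullets I would compute $a\circ b = (g\circ\hat a)\circ(\hat b\circ h) = g\circ(\hat a\circ\hat b)\circ h$, and analogously $c\circ d = g\circ(\hat c\circ\hat d)\circ h$, so the desired equality reduces to $\hat a\circ\hat b=\hat c\circ\hat d$. The third bullet makes the linears $\ell_1,\ell_1\iter{-1}$ cancel in $(\hat a\circ\ell_1)\circ(\ell_1\iter{-1}\circ\hat b)$ and similarly for $\ell_2,\ell_2\iter{-1}$; thus the required identity becomes the Chebychev commutation $T_n\circ T_m=T_m\circ T_n$ in case (\ref{Ritt2}.1), or the identity \eqref{introeqritt} in case (\ref{Ritt2}.2).

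For the ``only if'' direction, begin by applying Lemma~\ref{gcdlem} to $a\circ b=c\circ d$: this produces $\hat a,\hat b,\hat c,\hat d,g,h$ satisfying the first two bullets together with $\hat a\circ\hat b=\hat c\circ\hat d$. The gcd identities in that lemma force $\gcd(\deg\hat a,\deg\hat c)=\gcd(\deg\hat b,\deg\hat d)=1$, so Theorem~\ref{Ritt2} applies to the reduced equation and yields linears $\mu_1,\mu_2,\mu_3,\mu_4$ such that the quadruple $(\mu_1\circ\hat a\circ\mu_2,\,\mu_2\iter{-1}\circ\hat b\circ\mu_3,\,\mu_1\circ\hat c\circ\mu_4,\,\mu_4\iter{-1}\circ\hat d\circ\mu_3)$ has the shape (\ref{Ritt2}.1) or (\ref{Ritt2}.2). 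Now absorb $\mu_1$ and $\mu_3$ by replacing $(g,\hat a,\hat c)$ with $(g\circ\mu_1\iter{-1},\,\mu_1\circ\hat a,\,\mu_1\circ\hat c)$ and $(h,\hat b,\hat d)$ with $(\mu_3\iter{-1}\circ h,\,\hat b\circ\mu_3,\,\hat d\circ\mu_3)$. These substitutions leave $a,b,c,d$ unchanged, preserve $\deg(g)$ and $\deg(h)$ (so the first two bullets remain valid), and transform the Ritt~2 quadruple into $(\hat a\circ\mu_2,\,\mu_2\iter{-1}\circ\hat b,\,\hat c\circ\mu_4,\,\mu_4\iter{-1}\circ\hat d)$, matching the third bullet with $\ell_1:=\mu_2$ and $\ell_2:=\mu_4$.

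The main point requiring care is the ``after perhaps switching'' clause in Theorem~\ref{Ritt2}: if the form is only attained after exchanging $(\hat a,\hat b)$ with $(\hat c,\hat d)$, then the unswapped tuple need not literally match (\ref{Ritt2}.1) or (\ref{Ritt2}.2). In case (\ref{Ritt2}.1) this is harmless, since $T_n$ and $T_m$ play symmetric roles and one may relabel the parameters. In case (\ref{Ritt2}.2), the hypothesis $\deg(a)\le\deg(c)$ (equivalently $\deg\hat a\le\deg\hat c$) rules out the need to swap except in the degenerate subcase when $h$ is a nonzero constant; there the identity $X^n\circ X^s = X^s\circ X^n$ lets one re-express the tuple as an unswapped instance of (\ref{Ritt2}.2) with the roles of $n$ and $s$ interchanged (and a rescaled constant $h'$), so the conclusion still holds.
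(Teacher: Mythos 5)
The paper states this corollary without proof, intending exactly the combination of Lemma~\ref{gcdlem} and Theorem~\ref{Ritt2} that you carry out; your argument is essentially the intended one, including the correct observation that the only real subtlety is the ``after perhaps switching'' clause in Theorem~\ref{Ritt2}, and that absorbing $\mu_1$ into $g$ and $\mu_3$ into $h$ reduces the four Ritt linears to the two $\ell_1,\ell_2$ in the statement.

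One small imprecision in the edge-case analysis is worth flagging. You claim that, in case (\ref{Ritt2}.2), the inequality $\deg\hat a\le\deg\hat c$ forces a needed swap to occur only when $h$ is a nonzero constant. Comparing degrees in the swapped form gives $\deg\hat a=s+n\deg h$ and $\deg\hat c=n$, and the inequality $s+n\deg h\le n$ does admit one further possibility besides $\deg h=0$: namely $n=1$, $s=0$, $\deg h=1$ (forcing $\deg\hat a=\deg\hat c=1$, hence all of $\hat a,\hat b,\hat c,\hat d$ linear). Your stated fix -- interchanging $n$ and $s$ -- produces $X^0$ in that subcase and so does not literally apply. The corollary is still true there: with all four reduced polynomials linear one can take $\ell_1=\hat a\iter{-1}$ and $\ell_2=\hat c\iter{-1}\circ(\hat a\circ\hat b)$, which puts the unswapped tuple in the shape $(X,\,X^{s'}h'(X),\,X^{s'}h'(X),\,X)$ with $(s',h')=(0,\hat a\circ\hat b)$ or $(1,$ constant$)$ according as $(\hat a\circ\hat b)(0)$ is nonzero or zero, so no swap is ever needed in this degenerate case. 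This is a one-line patch rather than a flaw in the overall approach, but your current wording leaves that subcase unaddressed.
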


As noted above, if indecomposable $a,b,c,d\in\C[X]$ satisfy
$a\circ b=c\circ d$, and if there is no linear $\ell\in\C[X]$ for which
$(a,b)=(c\circ\ell,\, \ell\iter{-1}\circ d)$, then $a,b,c,d$ satisfy the
hypotheses of Theorem~\ref{Ritt2}.  In this situation, we
refer to the replacement of $a\circ b$ by $c\circ d$ as a \emph{Ritt move}.
Thus, Theorem~\ref{Ritt1} says that one can pass from any complete
decomposition to any other by a sequence of steps, each of which is either a
Ritt move or is the replacement of consecutive indecomposables $a$ and $b$ by
$a\circ\ell$ and $\ell\iter{-1}\circ b$ for some linear $\ell\in\C[X]$.  Note
that the insertion of $\ell$ and $\ell\iter{-1}$ does not affect the sequence
of degrees of the indecomposables in a complete decomposition, and in a Ritt
move two consecutive coprime degrees in this sequence are interchanged.
Recall that a complete decomposition is uniquely determined by the sequence of
degrees of the involved indecomposables, up to the insertion of pairs of
inverse linears between adjacent indecomposables.
Now pick $f\in\C[X]$ with $\deg(f)>1$, and let
$(u_1,\dots,u_r)$ and $(v_1,\dots, v_r)$ be two complete decompositions of $f$.
Then the sequence $(\deg(v_1),\dots,\deg(v_r))$ can be obtained from the
sequence $(\deg(u_1),\dots,\deg(u_r))$ via finitely many steps, each of
which involves interchanging two consecutive coprime entries.
We note that there are examples in which every permutation of
$(\deg(u_1),\dots,\deg(u_r))$ occurs -- namely, if $f$ is $X^n$ or $T_n$.
However, it turns out that such examples are quite special, and in general
there are further constraints on which permutations can occur.  We will
deduce these constraints in Section~\ref{sec constraints}; naturally, they
depend on the form of the polynomials $u_i$ rather than merely their degrees.


\section{The polynomials involved in Ritt moves}
\label{sec:various}

The difficulty in applying Ritt's results is that, after applying a
Ritt move to an adjacent pair of indecomposables in a complete decomposition,
it may happen that one of the resulting indecomposables
can be involved in another Ritt move, and so on.  In this section we
prove various results about the special polynomials
involved in Ritt moves, which will allow us to control all subsequent
Ritt moves involving the resulting polynomials.  We also
give useful characterizations of these special polynomials.

We will use the following terminology.
\begin{defi}
  We say $f,g\in\C[X]$ are \emph{equivalent} if there are linear
  $\ell_1,\ell_2\in\C[X]$ such that $f=\ell_1\circ g\circ\ell_2$.
\end{defi}

\begin{defi}
  For $f\in\C[X]$, we say $f$ is \emph{cyclic} if it is equivalent to $X^n$ for
  some $n>1$, and we say $f$ is \emph{dihedral} if it is equivalent to $T_n$
  for some $n>2$.
\end{defi}

Here the (normalized) Chebychev polynomial $T_n$ is defined by the
functional equation $T_n(Y+1/Y)=Y^n+1/Y^n$; the classical Chebychev polynomial
$C_n(X)$ defined by $C_n(\theta)=\cos(n\arccos \theta)$ satisfies
$T_n(2X)=2C_n(X)$.
Thus $T_0=2$ and $T_1=X$, and in general
$T_n=XT_{n-1}-T_{n-2}$,
so $T_n$ is a degree-$n$ polynomial and for $n>1$ the two
highest-degree terms of $T_n$ are $X^n$ and
$-nX^{n-2}$.  Also, $T_n\circ (-X)=(-1)^n T_n$ and $T_n\circ T_m=T_m\circ T_n$.


\subsection{Ramification}

We will need some properties of the ramification in the cover
$\pi_f\colon\Line_x\to\Line_t$ corresponding to $f\in\C[X]$,
where $x$ is transcendental over $\C$ and $t=f(x)$ (and $\Line_x$
denotes the projective line with coordinate $x$).  We use the standard
notions of ramification indices, ramification points, and branch points
for the cover $\pi_f$.  We also refer to a point of $\Line_x$ as a
`special point' if it is unramified in $\pi_f$ but its image is a branch
point.  In our concrete setting these notions have the following explicit
definitions:

\begin{defi}
  Pick $f\in\C[X]\setminus\C$.  For $x_0\in\C$, the \emph{ramification index}
  of $x_0$ in $f$, denoted $e_f(x_0)$, is the multiplicity of $x_0$ as a root
  of $f(X)-f(x_0)$.  The \emph{finite ramification points} of $f$ are the
  values $x_0\in\C$ for which $e_f(x_0)>1$.  The \emph{finite branch points} of
  $f$ are the values $f(x_0)$, where $x_0$ is a finite ramification point.  The
  \emph{special points} of $f$ are the values $x_0\in\C$ which are not finite
  ramification points, but for which $f(x_0)$ is a finite branch point.
\end{defi}

We briefly record some standard ramification facts in polynomial language.
If $e_1,e_2,\dots,e_k$ are the multiplicities of the roots of $f(X)-x_0$,
then $\Mon(f)$ contains an element having cycle lengths $e_1,\dots,e_k$
(but this fact is not used in this paper).
Ramification indices are multiplicative in towers: for $f,g\in\C[X]\setminus\C$
and $x_0\in\C$, we have $e_{f\circ g}(x_0) = e_f(g(x_0))\cdot e_g(x_0)$.  The
Riemann--Hurwitz formula for $\pi_f$ says
\[
\deg(f) - 1 = \sum_{x_0\in\C} (e_f(x_0)-1);
\]
since the finite ramification points of $f$ are precisely the roots of the
derivative $f'(X)$, this amounts to writing the degree of $f'(X)$ as
the sum of the multiplicities of its roots.  We will also use the
Riemann--Hurwitz formula for the Galois closure of the cover $\pi_f$, as well
as the following variant of Abhyankar's lemma:

\begin{lemma} \label{abhy}
  Let $F_1,F_2$ be finite extensions of $\C(x)$ whose compositum is $E$.
  Let $Q$ be a place of $F:=F_1\cap F_2$, let $P_i$ be a place of $F_i$
  lying over $Q$, and let $e_i$ denote the ramification index of $P_i/Q$.
  Then for each place $P$ of $E$ lying over both $P_1$ and $P_2$, the
  ramification index of $P/Q$ is $\lcm(e_1,e_2)$.  If $[F_1\col F]$ and
  $[F_2\col F]$ are coprime, then there are precisely $\gcd(e_1,e_2)$ such
  places $P$.
\end{lemma}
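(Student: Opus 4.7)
The plan is to pass to completions and reduce the statement to an explicit analysis of tensor products of tamely ramified local fields. Since $F_1,F_2$ are finite over $\C(x)$ and $\C$ is algebraically closed, every residue field at every place is $\C$, so every residue degree is $1$ and each extension of completions $\hat F_{i,P_i}/\hat F_Q$ is totally tamely ramified of degree $e_i$. Identifying $\hat F_Q\cong\C((t))$ with $t$ a uniformizer at $Q$, I would write $\hat F_{i,P_i}=\hat F_Q(\pi_i)$ with $\pi_i^{e_i}=t$.

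For the ramification assertion, fix a place $P$ of $E$ lying over both $P_1$ and $P_2$. Then $\hat E_P$ contains $\hat F_{1,P_1}\hat F_{2,P_2}$, and since the latter is finite over $\hat F_Q$ and hence already complete, equality holds. Setting $e=\lcm(e_1,e_2)$, I would check that this compositum equals $\hat F_Q(t^{1/e})$ by exhibiting $t^{1/e}$ as a monomial $\pi_1^a\pi_2^b$ for integers $a,b$ with $a/e_1+b/e_2=1/e$ (possible by B\'ezout, since $\gcd(e/e_1,e/e_2)=1$). This field is totally tamely ramified of degree $e$ over $\hat F_Q$, yielding $e(P/Q)=\lcm(e_1,e_2)$.

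For the counting, coprimality of $[F_1\col F]$ and $[F_2\col F]$ together with $F=F_1\cap F_2$ forces $[E\col F]=[F_1\col F][F_2\col F]$ (combining the bound $[E\col F_1]\le[F_2\col F]$ with the identity $[E\col F_1][F_1\col F]=[E\col F_2][F_2\col F]$), so the multiplication map $F_1\otimes_F F_2\to E$ is an isomorphism of $F$-algebras. Tensoring with $\hat F_Q$ over $F$ and invoking the standard local-global decomposition yields
\[
\prod_{P\mid Q}\hat E_P\;\cong\;\prod_{P_1,P_2}\bigl(\hat F_{1,P_1}\otimes_{\hat F_Q}\hat F_{2,P_2}\bigr),
\]
with a place $P$ on the left matching the pair $(P|_{F_1},P|_{F_2})$ on the right. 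The places over fixed $P_1,P_2$ therefore correspond to irreducible factors of $\hat F_{2,P_2}[X]/(X^{e_1}-\pi_2^{e_2})$. Writing $g=\gcd(e_1,e_2)$ and $e_i=ge_i'$, this polynomial factors as $\prod_{\zeta^g=1}(X^{e_1'}-\zeta\pi_2^{e_2'})$, producing $g$ factors.

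The main technical step will be checking that each factor $X^{e_1'}-\zeta\pi_2^{e_2'}$ is irreducible over $\hat F_{2,P_2}$. I expect to handle this with a routine valuation argument: a root has valuation $e_2'/e_1'$ in the normalized valuation of $\hat F_{2,P_2}$, and since $\gcd(e_1',e_2')=1$ this fraction has denominator $e_1'$ in lowest terms, forcing any extension containing it to have ramification at least $e_1'$; comparison with the degree bound then gives irreducibility.
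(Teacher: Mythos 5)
Your proof is correct, and it takes a genuinely different route from the paper's. The paper passes to a Galois closure $L$ of $E/F$, lets $G=\Gal(L/F)$ with $H_1,H_2,H$ the stabilizers of $F_1,F_2,E$, and works entirely with the inertia group $I$ of a place $R$ of $L$: cyclicity of $I$ and $H=H_1\cap H_2$ give the $\lcm$ formula for the ramification index, and the count of places is computed as $\abs{H_1I\cap H_2I}/\abs{HI}$ via the bijection $H\backslash G\to H_1\backslash G\times H_2\backslash G$ (which uses the coprimality to get $G=H_1H_2$). You instead pass to completions, use the Kummer structure $\hat F_{i,P_i}=\C((t))(t^{1/e_i})$ available because the residue field is $\C$ and everything is tame, and reduce the count to factoring $X^{e_1}-\pi_2^{e_2}$ over $\hat F_{2,P_2}$ via the tensor-product decomposition $E\otimes_F\hat F_Q\cong\prod_{P_1',P_2'}\hat F_{1,P_1'}\otimes_{\hat F_Q}\hat F_{2,P_2'}$, with the coprimality entering through $F_1\otimes_F F_2\cong E$. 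Both arguments are sound; the paper's avoids any structure theory of local fields and so adapts verbatim to other inertia computations in the paper, while yours is more explicit (it exhibits the completed extensions concretely and displays the factorization that produces exactly $\gcd(e_1,e_2)$ places) and makes transparent why this is a tame, residue-field-algebraically-closed phenomenon. One small point worth making explicit in your write-up: the hypothesis $F=F_1\cap F_2$ is what licenses the degree inequality $[E\col F_1]\le[F_2\col F]$, since it ensures $F\subseteq F_2$; and the normalization $\pi_i^{e_i}=t$ (rather than a unit times $t$) uses that units in $\C[[t]]$ have $e_i$\tth{} roots, which is again a consequence of $\C$ being algebraically closed of characteristic zero.
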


\begin{proof}
  Let $L$ be the Galois closure of $E/F$, let $G:=\Gal(L/F)$, and let
  $H_1$, $H_2$, and $H$ be the stabilizers in $G$ of $F_1$, $F_2$, and $E$,
  respectively.  Let $R$ be a place of $L$ lying over $P_1$ and $P_2$, let
  $I$ be the inertia group of $R/Q$, and let $P$ be the place of $E$ lying
  under $R$.  Then the inertia groups of $R/P$ and $R/P_i$ are $I\cap H$
  and $I\cap I_i$; since $H=H_1\cap H_2$, it follows that
  $I\cap H=(I\cap H_1)\cap (I\cap H_2)$.  Cyclicity of $I$ implies
  $\abs{I\cap H}=\gcd(I\cap H_1,I\cap H_2)$, so the ramification index
  of $P/Q$ is $\abs{I\col I\cap H}=\lcm(\abs{I\col I\cap H_1},
  \abs{I\col I\cap H_2})=\lcm(e_1,e_2)$.

  Since $G$ (resp.~ $H_i$) acts transitively on the places of $R$ lying over
  $Q$ (resp.~ $P_i$), and $I$ is the stabilizer of $R$, for $g\in G$ the place
  $gR$ lies over $P_i$ if and only if $g\in H_iI$.  Thus the places of $E$
  lying over $P_1$ and $P_2$ are the restrictions to $E$ of places $gR$ with
  $g\in H_1I\cap H_2I$; since the restrictions to $E$ of $g_1R$ and $g_2R$ are
  the same precisely when $g_2^{-1}g_1\in HI$, the number of places of $E$
  lying over $P_1$ and $P_2$ is $\abs{H_1I\cap H_2I}/\abs{HI}$.  Note that
  $\abs{HI}=\abs{H}\abs{I\col I\cap H}=\abs{H}\lcm(e_1,e_2)$, so we must show
  that $\abs{H_1I\cap H_2I}=\abs{H}e_1e_2$.  Assume $[F_1\col F]$ and
  $[F_2\col F]$ are coprime, or equivalently $\abs{G\col H_1}$ and
  $\abs{G\col H_2}$ are coprime.  Then $\abs{G\col H_1}$ divides
  $\abs{G\col H}=\abs{G\col H_2}\abs{H_2\col H}$, and so divides
  $\abs{H_2\col H}$, so $\abs{G}\le\abs{H_1}\abs{H_2}/\abs{H}=\abs{H_1H_2}$,
  whence $G=H_1H_2$.  Thus the set of right-cosets
  $H\backslash G$ has the same cardinality as
  $H_1\backslash G\times H_2\backslash G$; since $gH\mapsto (gH_1,gH_2)$
  defines an injection
  $\rho\colon H\backslash G\to H_1\backslash G\times H_2\backslash G$, it
  follows that $\rho$ is bijective.  
  Finally, $e_i=\abs{I\col I\cap H_i} = \abs{H_iI}/\abs{H_i}$, so
  $e_1e_2=\abs{\rho^{-1}(H_1I,H_2I)}$, whence indeed
  $e_1e_2\abs{H}=\abs{H_1I\cap H_2I}$ as desired.
\end{proof}

We now characterize cyclic and dihedral polynomials in terms of their
ramification.

\begin{lemma} \label{ramchar}
  Pick $f\in\C[X]$ with $\deg(f)>1$.  Then $f$ is cyclic if and only if $f$
  has a unique finite branch point (or equivalently, $f$ has a unique finite
  ramification point).  Likewise, $f$ is dihedral if and only if $f$ has
  precisely two finite branch points and every finite ramification point has
  ramification index $2$; these conditions imply there are precisely two
  special points.
\end{lemma}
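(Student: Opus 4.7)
The plan is to prove the forward directions by direct computation of the ramification of $X^n$ and $T_n$, which transfers to any equivalent polynomial by a bijection of source and target. For the cyclic converse, if $f$ has a unique finite branch point $y_0$ with preimages of multiplicities $e_1,\dots,e_k$, then $\sum e_i=n$ while Riemann--Hurwitz reads $n-1=\sum(e_i-1)=n-k$. Hence $k=1$, so $f(X)-y_0=c(X-x_1)^n$ and $f$ is equivalent to $X^n$. The same calculation shows that uniqueness of a finite branch point is equivalent to uniqueness of a finite ramification point.

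For the dihedral converse, suppose $f$ has exactly two finite branch points $y_1,y_2$ with every finite ramification index equal to $2$. Let $a_i$ be the number of ramification points above $y_i$. Riemann--Hurwitz gives $a_1+a_2=n-1$, and the number of special points above $y_i$ equals $n-2a_i$, so in total there are exactly $2n-2(a_1+a_2)=2$ special points (this also forces $n\ge 3$, since both $a_i\ge 1$).

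The crux is to show $f$ is equivalent to $T_n$. I would first pin down the monodromy group $G=\Mon(f)$ using a branch cycle description: an $n$-cycle $\sigma$ (inertia at $\infty$) and involutions $\tau_1,\tau_2$ (inertia at $y_1,y_2$) satisfying $\sigma\tau_1\tau_2=1$, giving $\tau_1\sigma\tau_1^{-1}=\sigma^{-1}$ and $G=\langle\sigma,\tau_1\rangle$ dihedral of order dividing $2n$. The cyclic case is ruled out because a faithful transitive cyclic action of order $n$ forces $H=1$, so $L=\C(x)$ has genus $0$, and Riemann--Hurwitz for the totally ramified cyclic cover $L/\C(t)$ of degree $n$ would permit only two branch points, contradicting the three present. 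So $\abs{G}=2n$.

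Finally I would recover the Chebychev form by Galois descent. Let $K=L^{\langle\sigma\rangle}$; an inertia analysis shows $K/\C(t)$ is a degree-$2$ cover of $\Line$ unramified at $\infty$ and ramified exactly at $y_1,y_2$, so after placing $y_1,y_2=\pm 2$ via a linear change of $t$ we get $K=\C(w)$ with $t=w+1/w$. Similarly $L/K$ is cyclic of degree $n$, totally ramified over $\infty$ (at $w=0,\infty$) and unramified elsewhere, hence $L=\C(y)$ with $y^n=w$. The subgroup $H=\Gal(L/\C(x))$ is then generated by an involution of the form $y\mapsto c/y$ (with $c^n=1$, so that $t$ is fixed), and after the substitution $y=\sqrt{c}\,u$ one obtains $x=y+c/y=\sqrt{c}\,(u+1/u)$ and $t=y^n+y^{-n}=\pm(u^n+u^{-n})=\pm T_n(x/\sqrt{c})$. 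Thus $f$ equals $T_n$ or $-T_n$ composed with a linear, and both of these are equivalent to $T_n$ (via $Y\mapsto -Y$ in the second case). I expect the main delicacy to be in arranging the linear normalizations consistently across these Galois-theoretic reductions and in the square-root bookkeeping at the end.
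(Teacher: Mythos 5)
Your treatment of the forward direction and the cyclic converse coincides with the paper's (compute the ramification of $X^n$, $T_n$ directly; use Riemann--Hurwitz to force a single totally ramified preimage in the cyclic case), and your explicit count of special points via $a_1+a_2=n-1$ and $n-2a_i$ preimages is a clean accounting that the paper merely asserts. The dihedral converse, however, takes a genuinely different route. You determine $\Mon(f)$ by the branch-cycle relation $\sigma\tau_1\tau_2=1$ (after ruling out the cyclic case by Riemann--Hurwitz on $L=\C(x)$), getting $G$ dihedral of order $2n$, and then build the Galois closure from the bottom: $\C(t)\subset K=\C(w)\subset L=\C(y)$ via a ramified quadratic cover and a Kummer extension, finally locating $\C(x)$ inside $L$ as the fixed field of an involution $y\mapsto c/y$. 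The paper instead computes $[L:\C(x)]=2$ by Riemann--Hurwitz applied to the Galois closure and then invokes Lemma~\ref{quadext}, whose proof factors $f(X)-f(x)$ into pieces of degree $\le 2$ in each of $X$ and $x$ so that $L$ is the function field of a genus-zero conic, then runs an inertia analysis similar in flavor to your final step. The paper's own Remark after Lemma~\ref{twistramchar} identifies the branch-cycle approach with Ritt's original argument and explains that the authors prefer their route because it reuses the tools of the rest of the paper and also yields the intermediate characterization in Lemma~\ref{quadext}, which is needed elsewhere (e.g.\ Lemma~\ref{cycdi}). Your version is self-contained but leans on the Riemann existence theorem more heavily: the product-one relation and the fact that the inertia generators $\sigma,\tau_1,\tau_2$ can be chosen to generate $G$ with that relation should be stated explicitly rather than taken for granted. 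The remaining normalizations you flag (choosing the degree-one $\ell$ with $x=\ell(y+c/y)$ to be a genuine polynomial, using that both $y=0$ and $y=\infty$ map to $x=\infty$; the square-root of $c$) are real bookkeeping but present no obstacle.
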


\begin{proof}
  If $f$ has a unique finite branch point or a unique finite ramification
  point, then by Riemann--Hurwitz it has both a unique finite branch point
  $\alpha$ and a unique finite ramification point $\beta$.  Thus
  $f(X+\beta)-\alpha$ has no nonzero roots, and so equals $\gamma X^{\deg(f)}$.

  Now suppose that $f$ has precisely two finite branch
  points, and further that every finite ramification point has ramification
  index $2$.  Letting $L$ denote the Galois closure of the extension
  $\C(x)/\C(f(x))$, Lemma~\ref{abhy} implies that $L/\C(f(x))$ is ramified
  over precisely two finite places of $\C(f(x))$ (both with ramification index
  $2$) and over the infinite place (with ramification index $n$).  By
  Riemann--Hurwitz we compute $[L\col\C(x)]=2$, so Lemma~\ref{quadext}
  implies $f$ is dihedral.

  Finally, if $f$ is cyclic or dihedral then it is well-known (and easy to
  verify) that the ramification of $\pi_f$ is as described.
\end{proof}

\begin{lemma} \label{quadext}
  Pick $f\in \C[X]$ with $\deg(f)>1$, let $x$ be transcendental over\/ $\C$,
  and let $L$ be the Galois closure of\/ $\C(x)/\C(f(x))$.  Then $L=\C(x)$
  if and only if $f$ is cyclic, and $[L\col\C(x)]=2$ if and only if $f$ is
  dihedral.
\end{lemma}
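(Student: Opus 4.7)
The plan is to establish each equivalence by direct verification of the ``if'' direction and a Galois-theoretic analysis of the converse. For $f = X^n$, the $n$ roots of $X^n - t$ are $\zeta^i x$ (with $\zeta$ a primitive $n$-th root of unity), all in $\C(x)$, so $L = \C(x)$. For $f = T_n$, introduce $y$ with $y + 1/y = x$; then $[\C(y):\C(x)] = 2$, and the identity $T_n(Y + 1/Y) = Y^n + 1/Y^n$ shows the roots $\zeta^i y + \zeta^{-i}/y$ of $T_n(X) - t$ all lie in $\C(y)$, so $L = \C(y)$.

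For the cyclic converse, $L = \C(x)$ forces $H = 1$, so $G = I$ is a finite cyclic subgroup of $\mathrm{Aut}(\C(x)) = PGL_2(\C)$. The total ramification of $\C(x)/\C(t)$ at $x = \infty$ forces $G$ into the affine subgroup $\{x \mapsto ax + b\}$; a finite cyclic subgroup there has no nontrivial translation, so it fixes a common point and, after a translation, equals $\{x \mapsto \zeta^i x\}$. Its fixed field is $\C(x^n)$, so after a linear change $t = \alpha x^n + \beta$, giving $f \sim X^n$.

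For the dihedral converse, $|H| = 2$ and $|G| = 2n$ give that $I = \langle\sigma\rangle$ is normal of index $2$ in $G$ and $H = \langle\tau\rangle$ with $\tau\sigma\tau = \sigma^k$, $k^2 \equiv 1 \pmod n$, and $k \neq 1$ (since $k = 1$ would make $H$ normal, contradicting core-freeness). The main step, and central obstacle, is to show $k \equiv -1 \pmod n$, so that $G$ is dihedral. I would derive this from the existence of the geometric cover $L/\C(t)$: the topological product-one relation forces the inertia generators at the branch points to multiply to $1$ in $G$, and projecting to $G^{\mathrm{ab}} = G/\langle\sigma^{k-1}\rangle \cong \Z/\gcd(n, k-1) \times \Z/2$ yields a linear constraint on their images. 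Combined with the ramification bookkeeping --- each $I_\alpha$-orbit on $G/H$ has length $|I_\alpha|$ or $|I_\alpha|/2$, determining $r_\alpha := n - |f^{-1}(\alpha)|$ in the Riemann--Hurwitz identity $\sum_\alpha r_\alpha = n - 1$ --- and with the requirement that the inertia generators together with $g_\infty$ generate $G$ (not merely $I$), the case $k \not\equiv -1$ is ruled out by inconsistency between the abelianization sum and the available $r_\alpha$ values. Once $G$ is dihedral, Riemann--Hurwitz on $L/\C(t)$ gives $g_L = 0$, so $L = \C(y)$ is rational; conjugating $G \hookrightarrow \mathrm{Aut}(\C(y)) = PGL_2(\C)$ to the standard dihedral pair $(y \mapsto \zeta y,\ y \mapsto 1/y)$ identifies $\C(x) = L^H = \C(y + 1/y)$ and $\C(t) = L^G = \C(y^n + 1/y^n)$, so $f \sim T_n$ via the Chebychev identity.
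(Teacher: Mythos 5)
Your treatment of both ``if'' directions and of the cyclic converse is fine; for the cyclic case you argue via the embedding $G\hookrightarrow PGL_2(\C)$ and the fixed place at $\infty$, which is a legitimate alternative to the paper's Riemann--Hurwitz argument (the paper instead shows $f$ has a unique finite ramification point).

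The dihedral converse, however, has a genuine gap exactly where you flag the ``central obstacle.'' You want to first prove $G$ is dihedral and then deduce that $L$ has genus zero, but neither step is actually carried out. Showing $k\equiv -1\pmod n$ cannot be done by pure group theory: with $|G|=2n$, $I$ cyclic normal of order $n$, $H$ of order $2$ and core-free, groups such as the semidihedral group $SD_{16}$ (with $n=8$, $k=3$) satisfy all these abstract constraints. Ruling them out requires combining the product-one relation with a careful enumeration of the possible cycle types of the inertia elements on $G/H$ and the constraint $\sum_\alpha\bigl(n-|f^{-1}(\alpha)|\bigr)=n-1$; your phrase ``inconsistency between the abelianization sum and the available $r_\alpha$ values'' gestures at such an analysis but does not carry it out, and it is not clear a priori that it closes uniformly in $n$ and $k$. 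Likewise, ``Riemann--Hurwitz on $L/\C(t)$ gives $g_L=0$'' is asserted but not proved: knowing $G\cong D_n$ does not by itself determine the finite ramification in $L/\C(t)$, and one needs the same kind of case analysis (which inertia groups occur, and with what multiplicities) to conclude $g_L=0$.

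The paper avoids both difficulties by reversing the order: it first proves $L$ has genus zero by an elementary argument, then extracts the group structure. Concretely, since $[L:\C(x)]=2$, every root of $f(X)-f(x)$ has degree at most $2$ over $\C(x)$, so each irreducible factor $\Phi_i(X,x)$ of $f(X)-f(x)$ has $X$-degree $\le 2$ and (by the $X\leftrightarrow x$ symmetry) $x$-degree $\le 2$; because the leading coefficients in either variable are constants, $\Phi_i$ has total degree $\le 2$, hence defines a conic, so $L$ is rational. Only then is the Galois group analyzed, via explicit linear fractional transformations fixing $\{0,\infty\}$ (which forces $z\mapsto\alpha z^{\pm1}$ and hence the dihedral structure and $f=T_n$). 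If you want to salvage your outline, the shortest repair is to establish $g_L=0$ first --- by the paper's factorization argument or otherwise --- and then invoke the classification of finite subgroups of $PGL_2(\C)$ to rule out the non-dihedral possibilities, rather than attempting to pin down $k$ by abelianization.
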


\begin{proof}
  If $L=\C(x)$ then all points of $L$ lying over the same point of $\C(f(x))$
  are in a single orbit of $\Gal(L/\C(f(x)))$, and so have the same
  ramification index.  By Riemann--Hurwitz, it follows that $f$ has a unique
  finite ramification point, so $f$ is cyclic.  Conversely, if $f$ is cyclic
  then visibly $\C(x)/\C(f(x))$ is Galois, and likewise if $f$ is dihedral
  then $[L\col\C(f(x))]=2$.

  Henceforth assume $[L\col\C(f(x))]=2$.  Then each root of $f(X)-f(x)$ has
  degree at most $2$ over $\C(x)$,
  so $f(X)-f(x)$ is the product of irreducibles $\Phi_i(X,x)\in \C[X,x]$ each
  of which has $X$-degree at most $2$.  By symmetry, also the $x$-degree of
  each $\Phi_i(X,x)$ is at most $2$.  The leading coefficient of $\Phi_i(X,x)$
  (viewed as a polynomial in $X$ with coefficients in $\C[x]$) divides the
  corresponding leading coefficient of $f(X)-f(x)$, and hence lies in $\C^*$;
  likewise the same property holds if we interchange $x$ and $X$, so
  $\Phi_i(X,x)$ has total degree at most $2$.  Since $L\ne\C(x)$, some $\Phi_i$
  has $X$-degree $2$.  Let $x_i\in L$ satisfy $\Phi_i(x_i,x)=0$, so
  $L=\C(x,x_i)$.  Since $\Phi_i$ has total degree $2$, the genus of $L$ is
  zero, so $L=\C(z)$ for some $z$.

  Set $x=b(z)$ with $b\in \C(X)$ of degree $[\C(z):\C(x)]=2$. The
  infinite place of $\C(t)$ is totally ramified in each conjugate of
  $\C(x)$, so by Lemma~\ref{abhy} the infinite place of $\C(x)$ is
  unramified in $\C(z)$. Thus, after a linear fractional change of $z$ and
  a linear change of $x$ (and $f$), we have $x=z+1/z$.
  
  For each $\C$-automorphism $\sigma$ of $\C(z)$, the image $z^{\sigma}$ of $z$
  is a linear fractional change of $z$.  If $\sigma$ fixes $t:=f(x)$, then
  $\sigma$ fixes the set of values of $z$ which map to $t=\infty$, namely
  $\{0,\infty\}$, so $z^{\sigma} = \alpha z^{\epsilon}$ with $\alpha\in\C^*$
  and $\epsilon\in\{1,-1\}$.  Let $\tau$ generate $\Gal(\C(z)/\C(x))$,
  so $z^\tau=1/z$. Then
  $\Gal(\C(z)/\C(t))=C\gen{\tau}$, where $C$ consists of the
  maps $z\mapsto\alpha z$ with $\alpha^n=1$.  The fixed field of $C$ is
  $\C(z^n)$, and the fixed field of $C\gen{\tau}$ is
  $\C(t)=\C(z^n+1/z^n)$. Thus $t=\ell(z^n+1/z^n)$ for some degree-one
  rational function $\ell$. Since $\ell(\infty)=\infty$, in fact
  $\ell$ is a polynomial, so a linear change to $t$ makes
  $t=z^n+1/z^n$. But $f(z+1/z)=z^n+1/z^n$ implies $f=T_n$, and the
  result follows.
\end{proof}

For $n>1$, the unique finite branch point of $X^n$ is $0$, which is also the
unique finite ramification point.  For $n>2$, the special points of $T_n$ are
$2$ and $-2$, which are also the finite branch points of $T_n$.

The analogous ramification characterization of $X^s h(X)^n$ is immediate:

\begin{lemma} \label{twistramchar}
  Pick $f\in\C[X]\setminus\C$ and integers $n>0$ and $s\ge 0$.
  Then $f$ is equivalent to $X^s h(X)^n$ for some $h\in\C[X]\setminus X\C[X]$
  if and only if
  there exists $x_0\in\C$ such that $e_f(x_0)=s$ and $n\mid e_f(x_1)$
  for every $x_1\ne x_0$ satisfying $f(x_0)=f(x_1)$.
\end{lemma}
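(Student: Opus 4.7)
The plan is to prove both directions by a direct computation with factorizations in $\C[X]$, exploiting that linear polynomials are unramified so that ramification data is preserved under equivalence. Concretely, for any linears $\ell_1,\ell_2\in\C[X]$ and any $g\in\C[X]$, one has $e_{\ell_1\circ g\circ \ell_2}(x)=e_g(\ell_2(x))$ for every $x\in\C$; this follows by unravelling the definition of $e_f$ and using $\ell_1(Y)-\ell_1(Z)=a(Y-Z)$ together with a linear change of variable in $X$. I would record this preliminary observation first and then use it throughout.

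For the forward direction (working in the substantive case $s\ge 1$), assume $f=\ell_1\circ g\circ\ell_2$ with $g:=X^s h(X)^n$ and $h(0)\ne 0$. I would set $x_0:=\ell_2\iter{-1}(0)$. Directly from the factorization $g(X)=X^s h(X)^n$, the condition $h(0)\ne 0$ makes $0$ a root of $g$ of multiplicity exactly $s$, so $e_f(x_0)=e_g(0)=s$. Any other $x_1$ with $f(x_1)=f(x_0)$ corresponds to some $\beta:=\ell_2(x_1)\ne 0$ with $g(\beta)=0$; since $\beta\ne 0$, this forces $h(\beta)=0$, and then $e_f(x_1)=e_g(\beta)$ equals $n$ times the multiplicity of $\beta$ as a root of $h$, so $n\mid e_f(x_1)$.

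For the backward direction, I would first replace $f$ by the equivalent polynomial $f(X+x_0)-f(x_0)$ to reduce to $x_0=0$ and $f(0)=0$. The hypothesis $e_f(0)=s$ then says $X^s$ exactly divides $f$, and the divisibility hypothesis on the other roots gives
\[
f(X) \;=\; c\,X^s \prod_i (X-\alpha_i)^{n m_i}
\]
for some $c\in\C^*$, distinct nonzero $\alpha_i\in\C$, and positive integers $m_i$. Since $\C$ is algebraically closed I write $c=\gamma^n$ and set $h(X):=\gamma\prod_i(X-\alpha_i)^{m_i}$; then $f=X^s h(X)^n$, and $h(0)=\gamma\prod_i(-\alpha_i)^{m_i}\ne 0$ shows $h\notin X\C[X]$. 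There is no real obstacle here: the entire argument reduces to unique factorization in $\C[X]$, preservation of ramification under linear equivalence, and the availability of $n$-th roots in $\C$. The only mild bookkeeping step is the normalization to $x_0=0$, $f(0)=0$ at the start of the backward direction.
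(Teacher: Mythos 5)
Your proof is correct, and since the paper gives no argument of its own for this lemma (it declares it \emph{immediate} in the sentence just before stating it), your direct factorization argument supplies exactly the verification the authors surely had in mind: the preliminary identity $e_{\ell_1\circ g\circ\ell_2}(x)=e_g(\ell_2(x))$, followed by reading off multiplicities from $X^s h(X)^n$ in one direction and assembling $h$ from the factored form of $f$ in the other, using that $\C$ is algebraically closed to extract an $n$-th root of the leading coefficient. Your caveat about needing $s\ge 1$ in the forward direction is warranted: since $e_f(x_0)\ge 1$ for every $x_0\in\C$, the condition $e_f(x_0)=0$ is unsatisfiable, so the stated biconditional literally fails for $s=0$ (for instance $f=(X+1)^n$ is equivalent to $X^0h(X)^n$ with $h=X+1\notin X\C[X]$, yet no $x_0$ has $e_f(x_0)=0$), and your implicit restriction to $s\ge 1$ is consistent with how the lemma is actually used.
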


We do not give a ramification characterization of $X^s h(X^n)$.  Instead we
characterize these polynomials in a different way in Lemma~\ref{Halbchar}.

\begin{remark}
  Variants of Lemma~\ref{abhy} are classical, but we know no reference for this
  version; for instance, a special case is proved in \cite{Ritt}.
  We know of three other proofs of Lemma~\ref{ramchar}; here we only discuss
  the most difficult part, where we assume that $f$ has
  precisely two finite branch points and every finite ramification point has
  ramification index $2$.  Ritt \cite[p.~65]{Ritt} argued as follows:
  since $\Mon(f)$ has an $n$-cycle, one can show there is only one possibility
  for the permutation
  representations induced by generators of the inertia groups in the Galois
  closure of $\pi_f$, so by topological considerations there is just one
  equivalence class of such polynomials $f$, whence $f$ is dihedral since $T_n$
  has the prescribed ramification.  Versions of this argument appear in
  \cite[Lemma~9]{Fried-Schur} and \cite[Prop.~4]{Tortrat}.  Levi
  \cite[\S 13]{Levi}
  proves this result by observing that, after composing with linears, we have
  $n^2(f^2-4)=(X^2-4)f'(X)^2$; but $\pm T_n$ solve this differential equation,
  so one can deduce that $f$ is dihedral by showing there are
  at most two solutions in degree-$n$ polynomials.
  Beginning with $f^2-4=(X^2-4)h^2$, Dorey and Whaples \cite[p.~97]{DW} factor
  $f-2$ and $f+2$; upon substituting $X=Y+Y^{-1}$ and subtracting
  the expression for $f-2$ from that for $f+2$, they find that $4Y^n$ is the
  difference between the squares of two degree-$n$ polynomials, which
  determines the polynomials and consequently the form of $f$.
  The advantages of our proof are that it uses similar methods to the rest of
  this paper, and also that
  Lemma~\ref{quadext} provides additional information which does not follow
  from these other proofs.
\end{remark}


\subsection{Monodromy groups}
We now show that $X^n$ and $T_n$ are uniquely determined (up to
equivalence) by their monodromy groups.

\begin{lemma} \label{cycdi}
  Pick $f\in\C[X]$ of degree $n>1$, and put $G:=\Mon(f)$.  Then $G$ is cyclic
  if and only if $f$ is cyclic, in which case $\abs{G}=n$.  Likewise, if $n>2$
  then $G$ is dihedral if and only if $f$ is dihedral, in which case
  $\abs{G}=2n$.
\end{lemma}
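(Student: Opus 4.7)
The plan is to prove each equivalence by handling the forward and reverse directions separately. The forward implications follow from direct computation on $X^n$ and $T^n$, using that $\Mon$ is invariant under composition with linears on either side. For $f=X^n$, the splitting field of $f(X)-t$ over $\C(t)$ is already $\C(x)$ (with $x^n=t$), so $G=\Gal(\C(x)/\C(t))$ is cyclic of order $n$. For $f=T_n$, the substitution $x=y+y^{-1}$, $t=y^n+y^{-n}$ realizes the splitting field as $\C(y)$; its Galois group over $\C(t)$ consists of the automorphisms $y\mapsto\zeta y$ and $y\mapsto\zeta y^{-1}$ for $n\tth$ roots of unity $\zeta$, forming a dihedral group of order $2n$.

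For the reverse cyclic direction, recall from Section~\ref{sec: Ritt1} that $H$ is core-free in $G$ (since $L$ is the Galois closure of $\C(x)/\C(t)$). If $G$ is cyclic then $G$ is abelian, so every subgroup is normal; core-freeness therefore forces $H=1$. Hence $L=\C(x)$, and Lemma~\ref{quadext} implies that $f$ is cyclic; moreover $|G|=[G:H]=n$.

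For the reverse dihedral direction, assuming $n>2$, I will show $|G|=2n$, so that $[L\col\C(x)]=|H|=2$ and Lemma~\ref{quadext} gives that $f$ is dihedral. Suppose $G\cong D_m$, and recall the structural decomposition $G=HI$ with $I$ cyclic of order $n$ and $H\cap I=1$. Since $n>2$, the cyclic subgroup $I$ of order $n\ge 3$ must sit inside the rotation subgroup $R$ of $G$ (the unique cyclic subgroup of index $2$ in $D_m$ when $m\ge 3$; for $m\le 2$ no element has order $>2$, ruling out the existence of $I$). Then $H\cap R$ is a subgroup of the cyclic normal subgroup $R$, hence characteristic in $R$, and in particular preserved under conjugation by reflections (which invert $R$); thus $H\cap R$ is normal in $G$, and core-freeness of $H$ forces $H\cap R=1$. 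Consequently every nonidentity element of $H$ is a reflection; if $H$ contained two distinct reflections, their product would be a nontrivial rotation lying in $H\cap R=1$, a contradiction. So $|H|\le 2$, and $H\ne 1$ (otherwise $G=I$ would be cyclic, contrary to $G$ being dihedral with $n>2$). Hence $|H|=2$ and $|G|=2n$, completing the proof.

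The main obstacle is the reverse dihedral direction, which requires the careful interplay between the characteristic rotation subgroup of $D_m$, the cyclic inertia $I$, and the core-freeness of $H$. Once $H\cap R=1$ is established, the rest follows by elementary counting; the key technical input is the uniqueness of the cyclic subgroup of order $>2$ in $D_m$, which allows us to locate $I$ inside $R$.
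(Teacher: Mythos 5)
Your proof is correct and follows essentially the same route as the paper's: both reduce to Lemma~\ref{quadext} by pinning down $\abs{G}$, using the $n$-cycle from the inertia group at infinity and core-freeness of the point stabilizer $H$. You spell out the dihedral counting (via the $G=HI$ decomposition and the characteristic rotation subgroup) that the paper compresses into the single assertion ``if $G$ is dihedral and $n>2$ then $\abs{G}=2n$,'' and you compute $\Mon(X^n)$ and $\Mon(T_n)$ directly where the paper leans on the structure established in the proof of Lemma~\ref{quadext}.
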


\begin{proof}
  Since $G$ contains an $n$-cycle, if $G$ is cyclic then $\abs{G}=n$, and if
  $G$ is dihedral and $n>2$ then $\abs{G}=2n$.  The result now follows from
  Lemma~\ref{quadext}.
\end{proof}

\begin{remark}
  A different proof is given in \cite[Thm.~3.8]{Bilu}, using the fact that
  if the multiplicities of the roots of $f(X)-x_0$ are $e_1,\dots,e_k$ then
  $G$ has an element whose cycle lengths are $e_1,\dots,e_k$.  There are only a
  few possibilities for the cycle structure of an element of a dihedral group,
  and in combination with the Riemann--Hurwitz formula for $\pi_f$ this implies
  that if $G$ is dihedral (and $n\ne 4$) then $f$ has precisely two finite
  branch points and every finite ramification point has ramification index $2$.
\end{remark}

\subsection{Decompositions}

We now determine all decompositions of the special polynomials
$X^n$, $T_n$, $X^s h(X^n)$, and $X^s h(X)^n$.
First, $X^n=X^k\circ X^{n/k}$ and
$T_n=T_k\circ T_{n/k}$ for any divisor $k$ of $n$, 
and (\ref{multipledeg}.3) implies these are the only
decompositions of $X^n$ and $T_n$ up to equivalence:

\begin{lemma} \label{chebcheb}
  If $a,b\in\C[X]\setminus\C$ satisfy $a\circ b=X^n$, then $a=X^k\circ\ell$ and
  $b=\ell\iter{-1}\circ X^{n/k}$ for some linear $\ell\in\C[X]$.
  If $a,b\in\C[X]\setminus\C$ satisfy $a\circ b=T_n$, then $a=T_k\circ\ell$ and
  $b=\ell\iter{-1}\circ T_{n/k}$ for some linear $\ell\in\C[X]$.
\end{lemma}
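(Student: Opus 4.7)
The plan is to derive this lemma as an immediate consequence of (\ref{multipledeg}.3), which already handles the case when two decompositions $a\circ b=c\circ d$ have $\deg(a)=\deg(c)$.

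Set $k:=\deg(a)$, so $\deg(b)=n/k$ and in particular $k$ divides $n$. For the first statement, I observe the trivial identity $X^k\circ X^{n/k}=X^n$, giving another decomposition of $a\circ b=X^n$ in which the outer factor has degree $k=\deg(a)$. Applying (\ref{multipledeg}.3) with $(c,d):=(X^k,X^{n/k})$ produces a linear $\ell\in\C[X]$ with $a=X^k\circ\ell$ and $b=\ell\iter{-1}\circ X^{n/k}$, as required.

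For the Chebychev statement, I argue identically using $T_k\circ T_{n/k}=T_n$ (recorded in \eqref{introeqcheb} and the remarks in Section~\ref{sec:various}). Since $\deg(T_k)=k=\deg(a)$, a second application of (\ref{multipledeg}.3), now with $(c,d):=(T_k,T_{n/k})$, yields a linear $\ell$ with $a=T_k\circ\ell$ and $b=\ell\iter{-1}\circ T_{n/k}$.

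There is no real obstacle here — all the content is in (\ref{multipledeg}.3), which is already established — but I should note the one thing being used about $X^n$ and $T_n$ beyond just the existence of a same-degree parallel decomposition: nothing. The two halves of the lemma are independent applications of the same corollary, differing only in the choice of the comparison decomposition.
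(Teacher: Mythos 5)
Your proof is correct and matches the paper's own argument: the paper states immediately before the lemma that $X^n=X^k\circ X^{n/k}$ and $T_n=T_k\circ T_{n/k}$, and then invokes (\ref{multipledeg}.3) to conclude these are the only decompositions up to equivalence. No further comment is needed.
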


Conversely, we now describe which compositions of cyclic polynomials
are cyclic, and likewise for dihedral polynomials.

\begin{lemma} \label{chebswap}
  If $a$ and $b$ are cyclic, then $a\circ b$ is cyclic if and only if the
  finite ramification point of $a$ equals the finite branch point of $b$.  If
  $a$ and $b$ are dihedral, then $a\circ b$ is dihedral if and only if the
  special points of $a$ coincide with the finite branch points of $b$.
\end{lemma}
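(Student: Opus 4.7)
The plan is to apply the ramification characterizations from Lemma~\ref{ramchar}, combined with multiplicativity of ramification indices: $e_{a\circ b}(x_0)=e_a(b(x_0))\cdot e_b(x_0)$.

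For the cyclic case, let $\alpha$ be the unique finite ramification point of $a$ and $\beta$ the unique finite ramification point of $b$; by Lemma~\ref{ramchar}, $b(\beta)$ is the unique finite branch point of $b$. A point $x_0\in\C$ is a finite ramification point of $a\circ b$ exactly when $x_0=\beta$ or $b(x_0)=\alpha$. If $b(\beta)=\alpha$, then $\alpha$ is the unique branch point of $b$, so $b(X)-\alpha$ has $\beta$ as its only root, giving $a\circ b$ a unique finite ramification point; hence $a\circ b$ is cyclic. Conversely, if $b(\beta)\neq\alpha$, then $\alpha$ is not a branch point of $b$, so $b(X)-\alpha$ has $\deg(b)\geq 2$ distinct roots (none equal to $\beta$), and together with $\beta$ these give at least two distinct finite ramification points, so $a\circ b$ is not cyclic.

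For the dihedral case, write $R_a,R_b$ for the sets of finite ramification points, $B_a=\{\beta_1,\beta_2\}$, $B_b=\{\tau_1,\tau_2\}$ for the finite branch points, and $S_a=\{\sigma_1,\sigma_2\}$ for the special points of $a$. By Lemma~\ref{ramchar}, every member of $R_a\cup R_b$ has ramification index $2$, and $a\circ b$ is dihedral iff every finite ramification point of $a\circ b$ has index $2$ and $a\circ b$ has exactly two finite branch points. I will show these two conditions together are equivalent to $S_a=B_b$. For the index-$2$ condition: each $\gamma\in R_b$ has $e_{a\circ b}(\gamma)=2\cdot e_a(b(\gamma))$, which equals $2$ iff $b(\gamma)\notin R_a$, while each other ramification point of $a\circ b$ (the non-$R_b$ preimages of $R_a$) has index $e_a(b(x_0))=2$ automatically. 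So this condition is equivalent to $b(R_b)\cap R_a=B_b\cap R_a=\emptyset$. Granted this, the finite branch points of $a\circ b$ are $a(B_b)\cup B_a$ (both $\beta_j\in B_a$ are achieved, since each comes from some $\alpha\in R_a$, and such $\alpha\notin B_b$ has unramified $b$-preimages), so the exactly-two condition amounts to $a(B_b)\subseteq B_a$, i.e.\ $B_b\subseteq a^{-1}(B_a)=R_a\cup S_a$. Combined with $B_b\cap R_a=\emptyset$ and $|B_b|=|S_a|=2$, this forces $B_b=S_a$. Conversely, $B_b=S_a$ gives $B_b\cap R_a=S_a\cap R_a=\emptyset$ (special points are by definition not ramification points) and $a(B_b)=a(S_a)\subseteq B_a$, so both conditions hold.

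The main technical point is the bookkeeping in the dihedral case: correctly identifying $a^{-1}(B_a)=R_a\cup S_a$ as a disjoint union, and using $|S_a|=2$ from Lemma~\ref{ramchar} to upgrade the inclusion $B_b\subseteq S_a$ to equality. No deeper obstacle is anticipated, since once the ramification of $a$ and $b$ is pinned down by Lemma~\ref{ramchar}, the analysis of ramification of $a\circ b$ is routine.
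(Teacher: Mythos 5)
Your proof is correct and takes the same approach the paper indicates: the paper simply asserts the lemma is "an immediate consequence of Lemma~\ref{ramchar} and the multiplicativity of ramification indices," and you have supplied exactly the ramification bookkeeping that the one-line proof compresses, including the observation that the finite branch points of $a\circ b$ are $a(B_b)\cup B_a$ and that $a^{-1}(B_a)$ is the disjoint union $R_a\cup S_a$.
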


\begin{proof}
  This is an immediate consequence of Lemma~\ref{ramchar} and the
  multiplicativity of ramification indices.
\end{proof}

In practice, this result is often used in the following explicit form.

\begin{cor} \label{chebswapcor}
  Pick integers $m,n$ and linear $\ell,\ell_1,\ell_2\in\C[X]$.
  If $m,n>1$ and $X^m\circ \ell\circ X^n = \ell_1\circ X^{mn}\circ\ell_2$,
  then $\ell=\alpha X$ for some $\alpha\in\C^*$.
  If $m,n>2$ and $T_m\circ \ell\circ T_n=\ell_1\circ T_{mn}\circ\ell_2$,
  then $\ell = \epsilon X$ for some $\epsilon\in\{1,-1\}$.
\end{cor}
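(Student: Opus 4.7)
The plan is to reduce the corollary directly to Lemma~\ref{chebswap}. In each case, the right-hand side $\ell_1\circ X^{mn}\circ\ell_2$ (respectively $\ell_1\circ T_{mn}\circ\ell_2$) is by definition cyclic (respectively dihedral), so the hypothesis forces the left-hand side to be of the same type. Group the composition as $a\circ b$ with $a:=X^m\circ\ell$ and $b:=X^n$ in the cyclic case, and $a:=T_m\circ\ell$, $b:=T_n$ in the dihedral case. Then both $a$ and $b$ are of the appropriate type (since composing with a linear preserves equivalence classes), and $a\circ b$ is also of that type, so the ``if and only if'' of Lemma~\ref{chebswap} applies.

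For the cyclic case, write $\ell=\alpha X+\beta$ with $\alpha\in\C^*$. Then $a(X)=(\alpha X+\beta)^m$, whose derivative $m\alpha(\alpha X+\beta)^{m-1}$ shows that the unique finite ramification point of $a$ is $-\beta/\alpha$. The unique finite branch point of $b=X^n$ is $0$. Lemma~\ref{chebswap} requires these to be equal, so $\beta=0$ and $\ell=\alpha X$.

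For the dihedral case, recall (as recorded just before this corollary in Section~\ref{sec:various}) that the special points of $T_m$ are $\{2,-2\}$, and these are also the finite branch points of $T_n$. Since $\ell$ is bijective and preserves ramification indices, the special points of $a=T_m\circ\ell$ are $\ell^{-1}(\{2,-2\})$, so Lemma~\ref{chebswap} demands $\ell^{-1}(\{2,-2\})=\{2,-2\}$. Writing $\ell=\alpha X+\beta$, there are two possible pairings: either $(2-\beta)/\alpha=2$ and $(-2-\beta)/\alpha=-2$, which forces $\alpha=1$ and $\beta=0$; or $(2-\beta)/\alpha=-2$ and $(-2-\beta)/\alpha=2$, which forces $\alpha=-1$ and $\beta=0$. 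In either case $\ell=\epsilon X$ with $\epsilon\in\{1,-1\}$.

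I anticipate no significant obstacles: the entire argument is a direct application of Lemma~\ref{chebswap}, with the only care required in correctly identifying the finite ramification point of $a$ in the cyclic case and the special points of $a$ in the dihedral case. Both of these are immediate once one observes that composing with a linear on the right simply transports the relevant points in the domain by $\ell^{-1}$, while leaving the branch points fixed.
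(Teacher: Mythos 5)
Your proof is correct and follows exactly the intended route: the paper states no explicit proof for this corollary, presenting it immediately after Lemma~\ref{chebswap} as its "explicit form," and your argument carries out that derivation by correctly identifying the finite ramification point of $(\alpha X+\beta)^m$ as $-\beta/\alpha$ and the special points of $T_m\circ\ell$ as $\ell^{-1}(\{2,-2\})$, then imposing the matching condition from Lemma~\ref{chebswap}.
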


Now we address the same question for polynomials of the forms $X^s h(X^n)$
or $X^s h(X)^n$.  We first observe that polynomials of these forms behave well
under composition: $X^s h(X^n) \circ X^{\hat s} {\hat h}(X^n) =
X^{s \hat s} \tilde{h}(X^n)$ where
$\tilde{h}(X)={\hat h}(X)^s h(X^{\hat s} {\hat h}(X)^n)$, and likewise
$X^s h(X)^n\circ X^{\hat s} {\hat h}(X)^n = X^{s\hat s} \tilde{h}(X)^n$.
Conversely, we now show that these are the only ways that polynomials of these
forms can decompose.

\begin{lemma} \label{decompositions}
  Pick $a,b,h\in\C[X]\setminus\C$ and coprime positive integers $s$ and $n$.
  If $a\circ b=X^s h(X)^n$ then $a=X^j \hat{h}(X)^n\circ\ell$ and
  $b=\ell\iter{-1}\circ X^k \tilde{h}(X)^n$ for some $j,k>0$ and some
  $\hat{h},\tilde{h},\ell\in\C[X]$ with $\ell$ linear.  If
  $a\circ b = X^s h(X^n)$ then $a=X^j\hat{h}(X^n)\circ\ell$ and
  $b=\ell\iter{-1}\circ X^k\tilde{h}(X^n)$ for some $j,k>0$ and some
  $\hat{h},\tilde{h},\ell\in\C[X]$ with $\ell$ linear.
\end{lemma}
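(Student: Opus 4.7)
The plan is to handle the two parts in parallel: in each case I set $z_0 := b(0)$, $s_1 := e_b(0)$, and $s_2 := e_a(z_0)$, so that $s_1 s_2 = s$ and both $s_i$ are coprime to $n$. For the first assertion, every $\alpha \in b^{-1}(z_0) \setminus \{0\}$ is a root of $f := X^s h(X)^n$ distinct from $0$, hence a root of $h$; so $e_f(\alpha) = s_2 \cdot e_b(\alpha)$ is a multiple of $n$, and coprimality of $s_2$ to $n$ forces $n \mid e_b(\alpha)$. Lemma~\ref{twistramchar} applied to $b$ at $x_0 = 0$ then shows $b$ is equivalent to $X^{s_1}\tilde h(X)^n$, and since the inner linear in this equivalence must fix $0$ it can be absorbed into $\tilde h$, giving $b = \ell\iter{-1} \circ X^{s_1} \tilde h(X)^n$ for some linear $\ell$. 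Setting $A := a \circ \ell\iter{-1}$ and $\beta(X) := X^{s_1}\tilde h(X)^n$, matching multiplicities of $X$ in $A(\beta(X)) = X^s h(X)^n$ forces $A(Y) = Y^{s_2} Q(Y)$ with $Q(0) \neq 0$, and the remaining work is to show $Q$ is a perfect $n$-th power.

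To do this I would first observe that unique factorization in $\C[X]$ forces $Q(\beta(X)) = h(X)^n / \tilde h(X)^{n s_2}$ to be the $n$-th power of some polynomial $P(X)$, since every prime of $\C[X]$ occurs in this ratio with multiplicity divisible by $n$. I then work in the function field: with $x$ transcendental and $E := \C(\beta(x))$, one has $[\C(x) : E] = \deg\beta \equiv s_1 \pmod n$, and $\gcd(s_1,n)=1$ gives $\gcd([\C(x):E],n) = 1$. Since $P(x)^n = Q(\beta(x)) \in E$, the degree of $P(x)$ over $E$ divides both $n$ and $[\C(x):E]$, so $P(x) \in E$; moreover $P$ is a polynomial in $x$ whose only pole lies above $\beta(x) = \infty$, so $P(x) \in \C[\beta(x)]$, giving $P(X) = \hat h(\beta(X))$ for some $\hat h \in \C[X]$. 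Substituting back yields $Q(Y) = \hat h(Y)^n$, so $A(Y) = Y^{s_2}\hat h(Y)^n$ and $a = (X^{s_2}\hat h(X)^n)\circ \ell$.

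For the second assertion I exploit the scaling symmetry $f(\zeta X) = \zeta^s f(X)$, valid for every $n$-th root of unity $\zeta$, which follows directly from $f = X^s h(X^n)$. Writing $\tilde a(Y) := \zeta^{-s} a(Y)$ and $b_\zeta(X) := b(\zeta X)$, the identity $a(b(\zeta X)) = \zeta^s a(b(X))$ becomes $\tilde a \circ b_\zeta = a \circ b$: two decompositions of $f$ with matching factor degrees. By (\ref{multipledeg}.3) there is a linear $\ell_\zeta$ with $b(\zeta X) = \ell_\zeta(b(X))$ and $a(\ell_\zeta(Y)) = \zeta^s a(Y)$. Evaluating the first identity at $X = 0$ pins $\ell_\zeta(z_0) = z_0$, and matching the $X^{s_1}$-coefficients of $b(\zeta X) - z_0 = \ell_\zeta(b(X)) - z_0$ forces $\ell_\zeta(Y) = z_0 + \zeta^{s_1}(Y - z_0)$. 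Thus $b(\zeta X) - z_0 = \zeta^{s_1}(b(X) - z_0)$ for every $\zeta$ with $\zeta^n = 1$, and the elementary observation that a polynomial $p$ satisfies $p(\zeta X) = \zeta^k p(X)$ for all such $\zeta$ if and only if $p(X) = X^{k \bmod n} q(X^n)$ yields $b(X) - z_0 = X^{s_1} \tilde h(X^n)$ with $\tilde h(0) \neq 0$. Setting $Z := Y - z_0$ and $A(Z) := a(Z + z_0)$ translates $a(\ell_\zeta(Y)) = \zeta^s a(Y)$ into $A(\eta Z) = \eta^{s_2} A(Z)$ with $\eta := \zeta^{s_1}$; since $\gcd(s_1, n) = 1$, $\eta$ ranges over all $n$-th roots of unity, so the same Fourier characterization yields $A(Z) = Z^{s_2}\hat h(Z^n)$ and hence $a = (X^{s_2}\hat h(X^n))\circ \ell$.

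The main obstacle I anticipate is the $n$-th-power step in part one: upgrading ``$Q(\beta(X))$ is an $n$-th power'' to ``$Q$ itself is an $n$-th power'' needs both the arithmetic input $\gcd(\deg\beta, n) = 1$ and the geometric identification $\C(\beta(x)) \cap \C[x] = \C[\beta(x)]$, which in turn relies on comparing pole data at $\infty$. Part two goes through more smoothly, as the $n$-th-root-of-unity symmetry produces the shapes of $a$ and $b$ simultaneously without ever invoking a ramification characterization; that is fortunate, since no ramification characterization of $X^s h(X^n)$ is available here.
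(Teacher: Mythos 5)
Your proof is correct. The second assertion follows essentially the same route as the paper: you exploit $f(\zeta X)=\zeta^s f(X)$ together with Corollary~(\ref{multipledeg}.3) to pin down the linears $\ell_\zeta$, exactly as Ritt--Zieve do; you just carry out the bookkeeping (that $\ell_\zeta$ fixes $z_0$ and scales by $\zeta^{s_1}$, and that $\zeta\mapsto\zeta^{s_1}$ is a bijection on $\mu_n$) more explicitly than the paper's terse version.

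The first assertion is where you genuinely diverge. For $b$ you both arrive at $b=\ell\iter{-1}\circ X^{s_1}\tilde h(X)^n$, the paper by direct polynomial arithmetic (factor $a$ and use that the polynomials $b-\beta$ are pairwise coprime) and you by invoking Lemma~\ref{twistramchar} — essentially the same ramification computation repackaged. The real difference is in getting the shape of $a$: the paper shows $n\mid n_\beta$ for every nonzero root $\beta$ of $a$ by arguing that otherwise the $b$-preimages of $\beta$ contribute too much to the Riemann--Hurwitz formula for $b$. You instead reduce to showing $Q$ is an $n\tth$ power, observe via unique factorization that $Q\circ\beta=P^n$ for a polynomial $P$, and then use a Kummer-theoretic argument in $\C(x)/\C(\beta(x))$: since $\C$ contains $\mu_n$, the extension $E(P(x))/E$ has degree dividing $n$, and since it also divides $[\C(x)\col E]=\deg\beta\equiv s_1\pmod n$ which is coprime to $n$, we get $P(x)\in E$, and the L\"uroth-style pole argument forces $P\in\C[\beta]$. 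This is a clean global argument where the paper makes a local ramification estimate; it buys conceptual clarity and fits the Galois-theoretic framework of Section~\ref{sec: Ritt1}, at the modest cost of importing Kummer theory, whereas the paper stays entirely within elementary ramification counting. Both are valid.

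Two small points of hygiene. First, you implicitly use $e_f(0)=s$ in deducing $s_1s_2=s$; this requires $h(0)\ne 0$, which is harmless to assume (if $h=X^m h_0$ with $h_0(0)\ne 0$ then replace $s$ by $s+mn$, which is still coprime to $n$) but should be said. Second, your phrase that ``the inner linear in this equivalence must fix $0$'' overstates what Lemma~\ref{twistramchar} literally gives; what is true and what you need is that one \emph{can choose} an equivalence with inner linear $X-x_0=X$, as in the explicit factorization $b(X)-b(0)=X^{s_1}\cdot(n\tth\text{ power})$. Neither affects the validity of the argument.
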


\begin{proof}
  Suppose $a\circ b = X^s h(X)^n$.  After replacing $a$ and $b$ by
  $a\circ\ell\iter{-1}$ and $\ell\circ b$ for some linear $\ell\in\C[X]$, we
  may assume $a(0)=b(0)=0$ and $a$ is monic.  Write
  $a=X^j\prod_{\beta} (X-\beta)^{n_{\beta}}$, where $\beta$ varies over the
  distinct nonzero roots of $a$.  Since the various polynomials $b-\beta$ are
  coprime to one another and to $b$, it follows that $b^j$ equals $X^s$ times
  an $n\tth$ power.  But $j$ divides $e_{a\circ b}(0)$, which is coprime to
  $n$, so $\gcd(j,n)=1$ and thus
  $b=X^k \tilde{h}(X)^n$ for some $\tilde{h}\in\C[X]$.  Every $b$-preimage
  of $\beta$ has ramification index divisible by $n/\gcd(n,n_{\beta})$;
  if $n\nmid n_{\beta}$ then this yields too large a contribution to the
  Riemann--Hurwitz formula for $b$.  Thus $a=X^j \hat{h}(X)^n$ for some
  $\hat{h}\in\C[X]$.

  Now suppose $a\circ b = X^s h(X^n)$.  For any primitive $n\tth$ root of
  unity $\zeta$, we have $a(b(\zeta X))=\zeta^s a(b(X))$, so 
  (\ref{multipledeg}.3) implies $a = \zeta^s a\circ\ell_3$ and
  $b(\zeta X)=\ell_3\iter{-1}\circ b$ for some linear $\ell_3\in\C[X]$.
  Thus $b=\beta + X^k\tilde{h}(X^n)$ for some $\beta\in\C$, so replacing
  $a$ and $b$ by $a(X+\beta)$ and $b-\beta$ implies $\ell_3=\zeta^k X$.
  Since $\deg(a\circ b)$ is coprime to $n$, we have $\gcd(k,n)=1$, so
  $\hat\zeta:=\zeta^k$ is a primitive $n\tth$ root of unity, and we conclude
  from $a=\zeta^s a\circ \hat\zeta X$ that $a=X^j \hat{h}(X^n)$.
\end{proof}

\begin{remark}
The fact that odd polynomials only decompose into odd polynomials
was proved in \cite[Prop.~1]{HR}; the analogous fact for decompositions
of $X^s h(X^n)$ with $n$ prime is \cite[Thm.~4.3]{H}.
\end{remark}


\subsection{Equivalence} \label{equivalence}

We now determine all equivalences between polynomials of the forms
$X^n$, $T_n$, $X^s h(X^n)$, and $X^s h(X)^n$.  This enables us to describe all
Ritt moves involving any prescribed polynomial.

\begin{lemma} \label{chebeq}
  If $n>1$ and $\ell_1,\ell_2\in\C[X]$ satisfy
  $\ell_1\circ X^n\circ \ell_2 = X^n$, then
  $\ell_2=\alpha X$ and $\ell_1=X/\alpha^n$ for some $\alpha\in\C^*$.
  If $n>2$ and $\ell_1,\ell_2\in\C[X]$ satisfy
  $\ell_1\circ T_n\circ \ell_2 = T_n$, then $\ell_2=\epsilon X$ and
  $\ell_1=\epsilon^n X$ for some $\epsilon\in\{1,-1\}$.
\end{lemma}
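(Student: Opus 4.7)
The plan is to apply the ramification characterizations from Lemma~\ref{ramchar}, combined with the explicit ramification data for $X^n$ and $T_n$ recorded immediately after the proof of Lemma~\ref{quadext} and the multiplicativity of ramification indices in towers. The key idea is that certain distinguished finite subsets of $\C$ (ramification points, branch points, special points) must coincide on both sides of each equation; since linear polynomials are everywhere unramified, these sets are simply pulled back or pushed forward by $\ell_1$ and $\ell_2$, giving strong constraints.

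For the cyclic case: since linears are unramified and ramification is multiplicative, the finite ramification locus of $\ell_1\circ X^n\circ\ell_2$ equals $\ell_2^{-1}(0)$. Equating with the unique finite ramification point of $X^n$, I obtain $\ell_2(0)=0$, so $\ell_2=\alpha X$ for some $\alpha\in\C^*$. Substituting back, $\ell_1(\alpha^n X^n)=X^n$ immediately forces $\ell_1=X/\alpha^n$.

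For the dihedral case: the finite branch points of the left-hand side are $\ell_1(\{2,-2\})$, and the special points are $\ell_2^{-1}(\{2,-2\})$. Equating each of these with the set $\{2,-2\}$ (which, as noted after Lemma~\ref{quadext}, is simultaneously the branch locus and the special-point set of $T_n$) shows in particular that $\ell_2$ must permute $\{2,-2\}$. A linear polynomial $\lambda X+\mu$ with $\{(\pm 2-\mu)/\lambda\}=\{2,-2\}$ must have $\mu=0$ and $\lambda=\pm 1$, so $\ell_2=\epsilon X$ with $\epsilon\in\{1,-1\}$. Using the identity $T_n\circ(-X)=(-1)^n T_n$ recorded in this section, the equation collapses to $\ell_1\circ(\epsilon^n T_n)=T_n$; writing $\ell_1=\beta X+\gamma$ and comparing the two top-degree coefficients (or simply constant and leading terms) gives $\beta\epsilon^n=1$ and $\gamma=0$, and since $\epsilon^n=\pm 1$ satisfies $\epsilon^{-n}=\epsilon^n$, we conclude $\ell_1=\epsilon^n X$.

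The main obstacle is really bookkeeping rather than conceptual: one must be careful to apply the correct invariant (ramification points for $X^n$, special points for $T_n$) to pin down $\ell_2$, since $T_n$'s branch points and special points happen to coincide. Once $\ell_2$ is forced to lie in $\{\pm X\}$, the identification of $\ell_1$ in both cases is a one-line calculation using the symmetry of $T_n$ and the injectivity of linear polynomials, with no further input needed beyond what is already assembled earlier in this section.
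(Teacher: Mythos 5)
Your proof is correct, and it takes a genuinely different route from the paper's. The paper's argument is purely elementary: it compares degrees to see $\deg(\ell_1)=\deg(\ell_2)=1$, compares coefficients of $X^{n-1}$ to force $\ell_2(0)=0$, and in the dihedral case compares the ratio of the coefficients of $X^n$ and $X^{n-2}$ to get $\alpha^2=1$. You instead invoke the ramification invariants assembled earlier in the section: multiplicativity of ramification indices plus the fact that linears are everywhere unramified shows the finite ramification locus transforms by $\ell_2^{-1}$, and the special-point locus (for $T_n$) transforms the same way, and equating each with the known sets $\{0\}$ (resp.\ $\{2,-2\}$) forces $\ell_2\in\{\alpha X\}$ (resp.\ $\ell_2\in\{\pm X\}$). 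Both approaches are sound; yours is more conceptual and fits the geometric machinery of the section, while the paper's has the advantage of being self-contained and not depending on the explicit ramification data for $T_n$ having already been justified. One small omission: you should record at the outset that $\deg(\ell_1)=\deg(\ell_2)=1$ (by comparing degrees of both sides), since the lemma's hypothesis only says $\ell_1,\ell_2\in\C[X]$; your ``linears are unramified'' step tacitly assumes this. This is a one-line fix.
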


\begin{proof}
  In either case, comparing degrees gives $\deg(\ell_1)=\deg(\ell_2)=1$, and
  comparing coefficients of $X^{n-1}$ implies $\ell_2(0)=0$, so
  $\ell_2=\alpha X$ with $\alpha\in\C^*$.
  If $\ell_1\circ X^n\circ \alpha X=X^n$ then $\ell_1=X/\alpha^n$.  Now
  suppose that $n>2$ and $\ell_1\circ T_n\circ \alpha X=T_n$.  Since the ratio
  of the coefficients of $X^n$ and $X^{n-2}$ in $\ell_1\circ T_n\circ \alpha X$
  is $\alpha^2$ times the corresponding ratio in $T_n$, we have
  $\alpha\in\{1,-1\}$.  Since $T_n(-X)=(-1)^n T_n(X)$, this implies
  $\ell_1=\alpha^n X$.
\end{proof}

\begin{lemma} \label{cyceqdi}
  The polynomials $T_n$ and $X^n$ are equivalent if and only if $n\le 2$.
\end{lemma}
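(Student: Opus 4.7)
The plan is to handle the two directions separately. For the easy direction, when $n=1$ both polynomials equal $X$, and when $n=2$ we observe that $T_2(X)=X^2-2=(X-2)\circ X^2\circ X$, so $T_2$ and $X^2$ are equivalent.

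For the harder direction, suppose $n\ge 3$; I want to show $T_n$ and $X^n$ are not equivalent. The cleanest approach is to invoke a decomposition-invariant that distinguishes them. Two natural candidates have already been set up in the paper: the number of finite branch points (Lemma~\ref{ramchar}) or the monodromy group (Lemma~\ref{cycdi}). I will use the ramification characterization, since it is immediate. By Lemma~\ref{ramchar}, the cyclic polynomial $X^n$ has a unique finite branch point, while the dihedral polynomial $T_n$ has precisely two finite branch points whenever $n>2$. The number of finite branch points is plainly invariant under the replacement $f\mapsto \ell_1\circ f\circ\ell_2$, since composing with $\ell_2$ on the right permutes the points of the source projective line (taking ramification points to ramification points with the same indices) and composing with $\ell_1$ on the left permutes the target line (sending branch points bijectively to branch points). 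Hence $T_n$ and $X^n$ cannot be equivalent.

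Alternatively, and perhaps more in the spirit of the surrounding subsection, one could replace the last paragraph by an appeal to Lemma~\ref{cycdi}: for $n>2$ the group $\Mon(T_n)$ is dihedral of order $2n$ while $\Mon(X^n)$ is cyclic of order $n$, and the monodromy group of a polynomial depends only on its equivalence class (the splitting field of $\ell_1\circ f\circ\ell_2-t$ over $\C(t)$ coincides with that of $f(Y)-\ell_1\iter{-1}(t)$ via $Y=\ell_2(X)$). The two groups have different orders, so $T_n$ and $X^n$ are not equivalent.

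There is no real obstacle here; the main point is simply to recognise that the lemma is a direct corollary of one of the invariants already developed, together with the small hand computation for $n\le 2$.
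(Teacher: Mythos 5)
Your proof is correct, and your ``alternative'' argument is in fact the paper's first remark: the paper's proof opens by observing that the lemma follows from Lemma~\ref{cycdi} (cyclic versus dihedral monodromy of orders $n$ and $2n$), which is precisely your second route. Your primary argument, counting finite branch points via Lemma~\ref{ramchar}, is a third valid route and is also sound: the number of finite branch points is manifestly preserved under pre- and post-composition by linears, and Lemma~\ref{ramchar} gives one branch point for $X^n$ and two for $T_n$ when $n>2$. Where you and the paper diverge is in the \emph{written-out} alternate proof: instead of invoking any of the ramification or Galois machinery, the paper compares coefficients directly. For $n\le 2$ it notes there is a single equivalence class in each degree (slightly slicker than your explicit identity $T_2=(X-2)\circ X^2$, though yours is fine); for $n>2$, if $T_n=\ell_1\circ X^n\circ\ell_2$, the vanishing of the $X^{n-1}$ coefficient forces $\ell_2(0)=0$, after which the $X^{n-2}$ coefficient of $\ell_1\circ X^n\circ\ell_2$ vanishes while that of $T_n$ equals $-n\ne 0$. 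That coefficient argument is self-contained and elementary, whereas yours exhibits the lemma as a corollary of the invariants built up earlier, at the cost of relying on heavier prerequisites. Both are legitimate; no gap.
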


\begin{proof}
  This follows from Lemma~\ref{cycdi}, but we give an alternate proof.  For
  $n\le 2$, there is a unique equivalence class of degree-$n$ polynomials.  Now
  suppose $n>2$ and $T_n = \ell_1\circ X^n\circ \ell_2$ with
  $\ell_1,\ell_2\in\C[X]$ linear.
  Since the coefficient of $X^{n-1}$ is zero in both $T_n$ and $X^n$, we must
  have $\ell_2(0)=0$.  But then the coefficient of $X^{n-2}$ in
  $\ell_1\circ X^n\circ \ell_2$
  is zero, yet the coefficient of $X^{n-2}$ in $T_n$ is nonzero, contradiction.
\end{proof}

\begin{lemma} \label{twistcheb}
  Pick $n,s>0$ and $h\in\C[X]$, and let $f$ be either $X^s h(X)^n$ or
  $X^s h(X^n)$.  If $f$ is cyclic and $n>1$ then $h$ is a monomial.
  If $f$ is dihedral then $n\le 2$.
\end{lemma}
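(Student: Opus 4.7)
My plan is to dispose of the four sub-claims (cyclic/dihedral times the two shapes of $f$) in turn, after first reducing to $h(0)\neq 0$: writing $h(X)=X^k\tilde h(X)$ with $\tilde h(0)\neq 0$, both $X^s h(X)^n$ and $X^s h(X^n)$ absorb the $X^k$ factor into the prefix, and $h$ is a monomial if and only if $\tilde h$ is constant. So I may assume $h(0)\neq 0$ throughout.

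For the two cyclic claims, Lemma~\ref{ramchar} provides the normal form $f(X)=c(X-x_0)^N+d$ with $N=\deg f$ and $x_0$ the unique finite ramification point. Since $s\ge 1$ gives $f(0)=0$, either $x_0=0$ (then $d=0$, $f=cX^N$, and comparison with the factorization immediately forces $h$ to be a monomial in either shape), or $x_0\neq 0$. In the subcase $x_0\neq 0$, the identity $f'=cN(X-x_0)^{N-1}$ has no root at $0$, whereas direct computation of $f'$ displays an $X^{s-1}$ factor, so $s=1$. For Case~A, the factor $h^{n-1}(h+nXh')$ of $f'$ must have all its roots at $x_0$; if $h$ has positive degree then $h=c_1(X-x_0)^{\deg h}$, and then $h+nXh'$ acquires a spurious root at $x_0/(n\deg h+1)\neq x_0$, while if $h$ is a nonzero constant then $\deg f=1$ contradicts cyclicity. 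For Case~B, $f'(X)=g(X^n)$ with $g(Y)=h(Y)+nYh'(Y)$ and $g(0)=h(0)\neq 0$; any nonzero root of $g$ would produce $n\ge 2$ distinct $n$-th roots of $f'$ all required to equal $x_0$, so $g$ must be a nonzero constant, making $\deg f\le 1$ and again contradicting cyclicity.

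The dihedral assertion in Case~A is immediate: a nonzero root $\alpha$ of $h$ yields $e_f(\alpha)\ge n$, and the bound $e_f\le 2$ from Lemma~\ref{ramchar} forces $n\le 2$; if $h$ has no nonzero root, then $h(0)\neq 0$ makes $h$ constant and $f=cX^s$ is cyclic rather than dihedral. The main obstacle is the dihedral assertion in Case~B. Assuming $n\ge 3$ for contradiction, I will use the automorphism $\sigma\colon x\mapsto\zeta x$ of $\C(x)$, where $\zeta$ is a primitive $n$-th root of unity; since $\sigma(t)=\zeta^s t$ (with $t=f(x)$), $\sigma$ restricts to an automorphism of $\C(t)$ of order $m:=n/\gcd(n,s)$, and therefore permutes the finite branch points of $f$ via $t\mapsto\zeta^s t$. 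Because $\sigma$ fixes only $0$ and $\infty$ on $\Line_t$, every orbit on $\C\setminus\{0\}$ has size $m$; combined with Lemma~\ref{ramchar}'s constraint that $f$ has exactly two finite branch points, a brief case analysis yields either (i) $m\ge 2$, which forces $0$ not to be a branch point (the alternative would require an orbit of size $m\ge 2$ to contain a single point), hence $s=1$, hence $\gcd(n,s)=1$, hence $m=n=2$ (contradicting $n\ge 3$), or (ii) $m=1$, i.e., $n\mid s$. In case~(ii), $f=\tilde f\circ X^n$ for $\tilde f(Y)=Y^{s/n}h(Y)$; combining with the dihedral equivalence $f=\ell_1\circ T_N\circ\ell_2$, Lemma~\ref{chebcheb} applied to the decomposition of $T_N\circ\ell_2$ identifies $X^n$ as equivalent to $T_n$, and Lemma~\ref{cyceqdi} then forces $n\le 2$, once more contradicting $n\ge 3$.
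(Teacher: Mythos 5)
Your proof is correct, but it follows a genuinely different route from the paper's for the two cyclic sub-cases and for the dihedral Case~B. For Case~A cyclic the paper argues in one line: a nonzero root $\alpha$ of $h$ is a ramification point with $f(\alpha)=0=f(0)$, so the unique finite branch point $0$ would have two $f$-preimages, contradiction. You instead analyze $f'=cN(X-x_0)^{N-1}$ and rule out $x_0\ne 0$ by tracking the roots of $X^{s-1}h^{n-1}(sh+nXh')$. For Case~B the paper simply compares coefficients: when $n>1$ (resp.\ $n>2$) the coefficient of $X^{k-1}$ (resp.\ of $X^{k-1}$ and $X^{k-2}$) in $X^s h(X^n)$ vanishes, which forces $\ell_2(0)=0$ in $\ell_1\circ X^k\circ\ell_2$ (resp.\ is impossible for $\ell_1\circ T_k\circ\ell_2$). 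Your Case~B cyclic again runs through $f'$, and your Case~B dihedral instead uses the action of $x\mapsto\zeta x$ on the two finite branch points together with Lemmas~\ref{chebcheb} and~\ref{cyceqdi}; this is structurally appealing but heavier than the paper's two-line coefficient check, and the $m\ge 2$ branch is a bit compressed (the inference $m=n=2$ needs the orbit-size bound $m\le 2$, coming from the $\sigma$-invariance of the two-element set $\{P_1,P_2\}\subset\C\setminus\{0\}$, to be stated before it can clash with $m=n\ge 3$). Your Case~A dihedral is essentially the paper's. Both arguments are sound; the paper's is shorter and more uniform, while yours trades the coefficient bookkeeping for a derivative/orbit analysis.
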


\begin{proof}
  If $f=X^s h(X)^n$ is equivalent to $X^k$ with $k>1$, then the unique finite
  branch point of $f$ has just one $f$-preimage.  If also $n>1$ then each
  nonzero root of $h$ is a ramification point of $f$ having the same $h$-image
  as $X=0$, a contradiction; thus $h$ has no nonzero roots, so $h$ is a
  monomial.
  
  If $f=X^s h(X)^n$ is equivalent to $T_k$ with $k>2$, then each ramification
  point of $f$ has ramification index $2$; thus $s\le 2$, so $h$ is
  non-constant, and each root $\alpha$ of $h$ satisfies $e_f(\alpha)\ge n$
  so $n\le 2$.

  Suppose $\ell_1\circ X^k\circ\ell_2 = X^s h(X^n)$ with $k>1$ and
  the $\ell_i$ linear.  If $n>1$ then equating coefficients of $X^{k-1}$
  gives $\ell_2(0)=0$, so evaluating at $X=0$ gives $\ell_1(0)=0$, whence
  $h$ is a monomial.
  
  Suppose $\ell_1\circ T_k\circ\ell_2 = X^s h(X^n)$ with $k>2$ and the
  $\ell_i$ linear.  If $n>2$ then the coefficients of $X^{k-1}$ and $X^{k-2}$
  on the right side are zero, but it is not possible for the corresponding
  coefficients on the left side to both be zero.
\end{proof}

We now describe the Ritt moves involving at least one dihedral polynomial.
Here the crucial point is that if such a move has type (\ref{Ritt2}.2)
then it can be rewritten as a move of type (\ref{Ritt2}.1).

\begin{lemma} \label{chebritt}
  Suppose $n>2$ and $a\circ b = c\circ d$ where $a,b,c,d\in\C[X]\setminus\C$
  satisfy $\gcd(\deg(a),\deg(c))=\gcd(\deg(b),\deg(d))=1$.  If $c=T_n$ then
  $d=\epsilon T_m\circ\ell$ and $a=\epsilon^n T_m\circ\hat\ell$ and
  $b=\hat\ell\iter{-1}\circ T_n\circ\ell$ where $\ell,\hat\ell\in\C[X]$ are
  linear and $\epsilon\in\{1,-1\}$.  If $d=T_n$ then
  $a=\ell\circ T_n\circ\hat\ell$ and
  $b=\hat\ell\iter{-1}\circ \epsilon T_m$ and $c=\ell\circ \epsilon^n T_m$
  where $\ell,\hat\ell\in\C[X]$ are linear and $\epsilon\in\{1,-1\}$.
\end{lemma}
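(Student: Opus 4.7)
The plan is to apply Ritt's second theorem (Theorem~\ref{Ritt2}) and then use the rigidity of $T_n$ (Lemma~\ref{chebeq}) to read off the parameters. Under the coprimality hypotheses, Theorem~\ref{Ritt2} says that after composing with linears and possibly swapping $(a,b)\leftrightarrow(c,d)$, the normalized quadruple equals either $(T_N,T_M,T_M,T_N)$ of form \emph{(\ref{Ritt2}.1)} or $(X^N, X^s h(X^N), X^s h(X)^N, X^N)$ of form \emph{(\ref{Ritt2}.2)}; I use capital letters to distinguish the theorem's indices from the lemma's $n$.

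The first step is to rule out form \emph{(\ref{Ritt2}.2)}. In that form, $c$ is equivalent (via the linears $\ell_i$) either to $X^N$ or to $X^s h(X)^N$, depending on whether we swapped. The first case is impossible by Lemma~\ref{cyceqdi}, since $T_n$ is equivalent to a monomial only for $n\le 2$. In the second case, Lemma~\ref{twistcheb} applies: $c$ being dihedral with $n>2$ forces $N\le 2$. The subcase $N=1$ is degenerate---$a$ and $d$ must be linear---and the conclusion is verified directly with $m=1$, $\epsilon=1$, $\hat\ell=a$, $\ell=d$. The subcase $N=2$ is the main technical point: using $X^2=(X+2)\circ T_2$ together with the identity $T_n(Y-2)+2 = Y\cdot h(Y)^2$ for odd $n$ (a consequence of $T_2\circ T_n = T_{2n} = T_n^2-2$ together with the fact that $-2$ is a simple root of $T_n+2$, with all other roots double), the form \emph{(\ref{Ritt2}.2)} quadruple can be reparametrized as a form \emph{(\ref{Ritt2}.1)} quadruple $(T_2,T_n,T_n,T_2)$.

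Hence the quadruple is of form \emph{(\ref{Ritt2}.1)}. Locating $c=T_n$ among the four normalized entries and applying Lemma~\ref{chebeq} to the identity equating this entry with $T_n$ forces the two surrounding linear adjustments to be $\epsilon X$ and $\epsilon^n X$ for some $\epsilon\in\{1,-1\}$. Setting $m$ to the degree of the other Chebychev factor in the quadruple, and letting $\hat\ell$ and $\ell$ denote (the inverses of) the remaining two linear adjustments, the three unused relations unpack directly to $a=\epsilon^n T_m\circ\hat\ell$, $d=\epsilon T_m\circ\ell$, and $b=\hat\ell\iter{-1}\circ T_n\circ\ell$, exactly as asserted. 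A quick check confirms that both the swapped and unswapped cases yield the same conclusion after relabeling $N\leftrightarrow M$.

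The second assertion (where $d=T_n$) follows by an entirely parallel argument: $d$ now occupies one of the two entries at the ``outer'' positions of the normalized quadruple, and the same three lemmas rule out form \emph{(\ref{Ritt2}.2)} and then pin down the linear factors to yield the stated factorizations of $a$, $b$, and $c$. The main obstacle throughout is the $N=2$ reduction in form \emph{(\ref{Ritt2}.2)}, whose resolution rests on the explicit ramification structure of $T_n$ at its finite branch points $\pm 2$.
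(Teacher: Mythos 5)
Your proof is correct and follows essentially the same route as the paper: apply Theorem~\ref{Ritt2}, use Lemmas~\ref{cyceqdi} and~\ref{twistcheb} to force $N\le 2$ in the (\ref{Ritt2}.2) case, convert the $N=2$ quadruple to form (\ref{Ritt2}.1), and finish with Lemma~\ref{chebeq}. The $N=2$ conversion is phrased differently (you invoke the identity $T_n(Y-2)+2=Y\,h(Y)^2$ rather than the paper's computation with $\ell_4(0)=2\epsilon$ and the special points of $T_n$), but both rest on the same ramification fact about $T_n$ at $\pm 2$ and lead to the same reduction.
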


\begin{proof}
  First suppose $c=T_n$.  Since $n>2$, Lemma~\ref{cyceqdi} implies $c$ is not
  cyclic.  By Theorem~\ref{Ritt2}, there are linear $\ell_j\in\C[X]$ for
  which the quadruple $Q:=(\ell_1\circ a\circ\ell_2,\,
  \ell_2\iter{-1}\circ b\circ\ell_3,\, \ell_1\circ c\circ\ell_4,\,
  \ell_4\iter{-1}\circ d\circ\ell_3)$ has one of the forms
  (\ref{Ritt2}.1) or (\ref{Ritt2}.2).  If it is (\ref{Ritt2}.1), then
  Lemma~\ref{chebeq} implies $\ell_4=\epsilon X$ and $\ell_1=\epsilon^n X$
  for some $\epsilon\in\{1,-1\}$, and the result follows.  So assume $Q$ has
  the form (\ref{Ritt2}.2); we will show that, after perhaps changing the
  $\ell_j$'s, we can also write $Q$ in the form (\ref{Ritt2}.1).
  Now $c=\ell_1\iter{-1}\circ X^s h(X)^N\circ\ell_4\iter{-1}$ where
  $h\in\C[X]$,\, $s\ge 0$, and $N:=\deg(a)$.
  Lemma~\ref{twistcheb} implies $N\le 2$.
  If $N=1$ then $a$ and $d$ are linear, so
  $a\circ b=c\circ d$ can be written in
  the form of (\ref{Ritt2}.1) as
  $(T_1\circ a) \circ (a\iter{-1}\circ T_n\circ d) = T_n \circ (T_1 \circ d)$.
  Now assume $N=2$, so $n$ is odd (since $\gcd(\deg(a),\deg(c))=1$).
  Since each ramification point of $T_n$ has ramification index $2$, we must
  have $s=1$; thus $X=0$ is a special point of $s^i h(X)^N$, so $\ell_4(0)$
  is a special point of $T_n$ and hence equals $2\epsilon$ for some
  $\epsilon\in\{1,-1\}$.  Thus $\ell_4=\alpha X+2\epsilon$ where
  $\alpha\in\C^*$.  Now
  $\ell_4\circ X^2=\alpha X^2+2\epsilon=-\epsilon T_2(\gamma X)$ where
  $\gamma^2=-\epsilon\alpha$, so $d=-\epsilon T_2\circ \gamma\ell_3\iter{-1}$.
  Since $T_n(-\epsilon X)=-\epsilon T_n$, it follows that
  $c\circ d = -\epsilon T_{2n}\circ \gamma\ell_3\iter{-1} =
   (-\epsilon T_2)\circ (T_n\circ\gamma\ell_3\iter{-1})$.
  Since also $c\circ d=a\circ b$,
  by (\ref{multipledeg}.3) we have $a=-\epsilon T_2\circ\hat\ell$ and
  $b=\hat\ell\iter{-1}\circ T_n\circ\gamma\ell_3\iter{-1}$ for some linear
  $\hat\ell$.  Combined with the expressions
  $c=-\epsilon T_n\circ (-\epsilon X)$ and
  $d=-\epsilon T_2\circ\gamma\ell_3\iter{-1}$, this shows that (after perhaps
  changing the $\ell_j$'s) the quadruple $Q$ can be written in the form
  (\ref{Ritt2}.1).
  
  One can use a similar (but easier) argument to prove the result when $d=T_n$;
  alternately, Theorem~\ref{monthm} and Lemma~\ref{cycdi} imply $a$ is
  dihedral, so the result follows from what was proved above.
\end{proof}

We now characterize the polynomials $X^s h(X^n)$ in terms of their
self-equivalences.

\begin{notation}
  For $f\in\C[X]$ with $\deg(f)>1$, let $\Gamma(f)$ be the set of linear
  $\ell\in\C[X]$ for which there exists $\hat\ell\in\C[X]$ with
  $\hat\ell\circ f = f\circ\ell$.
\end{notation}

Note that $\Gamma(f)$ is closed under composition and inversion, and hence
is a group under composition.  Further, $\Gamma(f)$ contains the group
$\Gamma_0(f)$ defined in Remark~\ref{BNrem}.

\begin{lemma} \label{Halbchar}
  Pick $f\in\C[X]$ with $\deg(f)=k>1$.  Then $\Gamma(f)$ is infinite if and
  only if there are linear $\ell_1,\ell_2\in\C[X]$ for which
  $\ell_1\circ f\circ\ell_2=X^k$, in
  which case $\Gamma(f)=\{\ell_2\circ \alpha\ell_2\iter{-1}:\alpha\in\C^*\}$.
  Also
  $\abs{\Gamma(f)}=n>1$ if and only if there are linear $\ell_1,\ell_2\in\C[X]$
  for which $\ell_1\circ f\circ \ell_2 = X^s \hat f(X^n)$ where $s\ge 0$ and
  $\hat f\in\C[X]$ is neither a monomial nor a polynomial in $X^j$ for any
  $j>1$.  In this case
  $\Gamma(f)=\{\ell_2\circ \alpha \ell_2\iter{-1}:\alpha^n=1\}$ is cyclic.
\end{lemma}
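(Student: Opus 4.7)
The plan is to study $\Gamma(f)$ as a subgroup of the affine group $\mathrm{Aff}(\C)$, acting on the finite set $R$ of finite ramification points of $f$. Differentiating $\hat\ell \circ f = f \circ \ell$ gives $f'(\ell(X)) = c \cdot f'(X)$ for some $c \in \C^*$, so $\ell$ permutes the roots of $f'$, i.e., the elements of $R$. Any element of $\mathrm{Aff}(\C)$ fixing two distinct points is the identity, so the action of $\Gamma(f)$ on $R$ is faithful whenever $\abs{R} \ge 2$.

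If $\Gamma(f)$ is infinite, faithfulness fails, so $\abs{R} \le 1$; by Riemann--Hurwitz $\abs{R} \ge 1$, hence $\abs{R} = 1$, so $f$ is cyclic by Lemma~\ref{ramchar}, i.e., $\ell_1 \circ f \circ \ell_2 = X^k$ for some linear $\ell_1, \ell_2$. A direct computation (cf.\ Lemma~\ref{chebeq}) shows $\Gamma(X^k) = \{\alpha X : \alpha \in \C^*\}$. The definition of $\Gamma$ gives the change-of-variables identity $\Gamma(\ell_1 \circ f \circ \ell_2) = \{\ell_2\iter{-1} \circ m \circ \ell_2 : m \in \Gamma(f)\}$; inverting this yields the claimed form $\Gamma(f) = \{\ell_2 \circ \alpha \ell_2\iter{-1} : \alpha \in \C^*\}$.

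For $\abs{\Gamma(f)} = n > 1$: any finite subgroup of $\mathrm{Aff}(\C)$ is cyclic (the barycenter of any finite orbit is a global fixed point, and the point-stabilizer in $\mathrm{Aff}(\C)$ is $\cong \C^*$). Let $\gamma$ generate $\Gamma(f)$, $p$ be its fixed point, and choose linear $\ell_2$ with $\ell_2(0) = p$. Then $\ell_2\iter{-1} \circ \gamma \circ \ell_2 = \zeta X$ for a primitive $n$-th root of unity $\zeta$. For any linear $\ell_1$, setting $g := \ell_1 \circ f \circ \ell_2$, the change-of-variables identity gives $\Gamma(g) = \{\zeta^i X : 0 \le i < n\}$. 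Coefficient comparison in $\hat\ell(g(X)) = g(\zeta X)$ (writing $g = \sum c_i X^i$) forces all positive-degree nonzero $c_i$ to have degree $\equiv s \pmod n$ for a fixed $s$; choosing $\ell_1$ to kill the constant term and absorbing any residual $X$-factor into $s$ expresses $g = X^s \hat f(X^n)$ with $\hat f(0) \ne 0$. The conditions on $\hat f$ follow: $\hat f$ is not a monomial (else $f$ would be cyclic and $\Gamma(f)$ infinite), and $\hat f$ is not a polynomial in $Y^j$ for any $j > 1$ (else $g = X^s \tilde h(X^{nj})$ would give $\zeta_{nj} X \in \Gamma(g)$, so $\abs{\Gamma(g)} > n$, contradiction).

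For the converse, assume $g = X^s \hat f(X^n)$ in this normalized form. Since $n > 1$, the $X^{\deg g - 1}$ coefficient of $g$ vanishes, and comparing this coefficient in $\hat\ell \circ g = g \circ \ell$ forces $b = 0$ for any $\ell = aX + b \in \Gamma(g)$. Matching the remaining coefficients gives $\hat f(a^n Y) = a^{n \deg \hat f} \hat f(Y)$, so the order of $a^n$ divides $\gcd\{\deg \hat f - i : d_i \ne 0\}$. Since $\hat f(0) \ne 0$ (so that $0$ lies in the index set) and $\hat f$ is not a polynomial in $Y^j$ for $j > 1$, this GCD is $1$, forcing $a^n = 1$ and hence $\abs{\Gamma(g)} = n$. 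Conjugating back, $\Gamma(f) = \{\ell_2 \circ \alpha \ell_2\iter{-1} : \alpha^n = 1\}$ is cyclic. The main subtlety is the canonical form of $\hat f$: in particular the implicit condition $\hat f(0) \ne 0$ (matching the $h \notin X\C[X]$ convention of Theorem~\ref{Ritt2} and Lemma~\ref{chebswap}), without which a single $f$ would admit inconsistent representations yielding different values of $n$.
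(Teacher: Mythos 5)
Your proof is correct, and it reaches the same final coefficient-comparison computation as the paper, but it gets there by a genuinely different route. The paper's proof is shorter and more uniform: it chooses $\ell_1,\ell_2$ once so that $g:=\ell_1\circ f\circ\ell_2$ is monic with no $X^{k-1}$ term and no constant term (this single normalization automatically places the would-be fixed point of $\Gamma$ at the origin), and then simply reads off that $\Gamma(g)$ consists of the scalings $\alpha X$ with $\alpha^n=1$, where $n$ is the gcd of the differences of degrees of terms of $g$ — with $g=X^k$ being the degenerate case giving $\alpha\in\C^*$. You instead split into cases: for $\Gamma(f)$ infinite you use the faithfulness of the action of $\Gamma(f)$ on the finite ramification set $R$ plus Lemma~\ref{ramchar}, and for $\Gamma(f)$ finite you invoke the cyclicity of finite subgroups of $\mathrm{Aff}(\C)$ via the barycenter argument before normalizing at the fixed point. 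This is more machinery than the paper deploys (and creates a dependence on Lemma~\ref{ramchar} that the paper avoids), but it buys a clearer picture of \emph{why} the normalization works: the fixed point of $\Gamma(f)$ is the unique ramification point in the cyclic case and the barycenter in the finite case. Your closing observation — that the statement tacitly requires $\hat f\notin X\C[X]$ (equivalently $s>0$ after normalizing away the constant term), without which $n$ is not well-defined (e.g.\ $\hat f=Y+Y^3$, for which $X^s\hat f(X^n)=X^{s+n}(1+X^{2n})$) — is a genuine and correct point about the lemma's phrasing; the paper's proof sidesteps it because its construction always yields $s\ge 1$ and $\hat f(0)\neq 0$.
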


\begin{proof}
  Pick linear $\ell_1,\ell_2\in\C[X]$ such that
  $g:=\ell_1\circ f\circ\ell_2$ is monic and
  has no terms of degrees $k-1$ or $0$.  If $g\ne X^k$ then
  $\Gamma(g) =  \{\alpha X:\alpha^n=1\}$ where $n$ is the
  greatest common divisor of the differences between degrees of terms of $g$,
  so $\Gamma(g)$ has order $n$ where $g=X^s \hat f(X^n)$ with $s,\hat f$ as
  required.  Since $\Gamma(X^k)=\{\alpha X:\alpha\in\C^*\}$ and
  $\Gamma(f)=\ell_2\circ\Gamma(g)\circ\ell_2\iter{-1}$, the result follows.
\end{proof}

Much of Lemma~\ref{Halbchar} was proved in \cite[\S 3]{Halbpol}.
This result allows us to determine all decompositions of even polynomials:

\begin{cor} \label{even}
  For $f,g\in\C[X]\setminus\C$ and $n>1$, we have $f\circ g\in\C[X^n]$ if and
  only if $f=\hat f(X^{n/\gcd(n,s)})\circ\ell\iter{-1}$ and
  $g=\ell\circ X^s \hat g(X^n)$
  for some $r\ge 0$ and some $\hat f,\hat g,\ell\in\C[X]$ with $\ell$ linear.
\end{cor}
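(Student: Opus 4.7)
\emph{Plan.} The strategy is to extract the structure of $g$ from self-equivalences of $g$ forced by the hypothesis $f\circ g\in\C[X^n]$, and then derive the structure of $f$ from the resulting factorization.

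First I will handle the forward direction by direct computation. Let $d:=\gcd(n,s)$ and write $n=dn_1$, $s=ds_1$ with $\gcd(n_1,s_1)=1$. Substituting $g=\ell\circ X^s\hat g(X^n)$ and $f=\hat f(X^{n/d})\circ\ell\iter{-1}$ gives $f\circ g=\hat f\bigl((X^s\hat g(X^n))^{n_1}\bigr)$; since $sn_1=s_1n$, we have $(X^s\hat g(X^n))^{n_1}=(X^n)^{s_1}\hat g(X^n)^{n_1}\in\C[X^n]$, so $f\circ g\in\C[X^n]$.

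For the converse, I will first convert the hypothesis into information about $\Gamma(g)$. For each $n$-th root of unity $\zeta$ the equation $f\circ(g\circ\zeta X)=f\circ g$ combined with (\ref{multipledeg}.3) produces a linear $\ell_\zeta$ with $g\circ\zeta X=\ell_\zeta\iter{-1}\circ g$, so $\zeta X\in\Gamma(g)$. Hence $\Gamma(g)$ contains a cyclic subgroup of order $n$, and by Lemma~\ref{Halbchar} $\Gamma(g)$ is cyclic of some order $m$ with $n\mid m$, together with a presentation $\ell_1\circ g\circ\ell_2=X^s\tilde g(X^m)$ and $\Gamma(g)=\{\ell_2\circ\alpha X\circ\ell_2\iter{-1}:\alpha^m=1\}$. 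The technical step is to show $\ell_2$ can be taken to be multiplication by a scalar: writing $\ell_2=aX+b$, the condition that $\zeta X\in\Gamma(g)$ for some $\zeta\ne 1$ evaluates to $\alpha X+b(1-\alpha)=\zeta X$, forcing $b=0$. Absorbing $a$ into the coefficients of $\tilde g$, and writing $m=nk$ so that $\tilde g(X^m)=\hat g(X^n)$ for a suitable $\hat g$, I obtain $g=\ell\circ X^s\hat g(X^n)$ with $\ell:=\ell_1\iter{-1}$.

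Finally I will derive the form of $f$. Set $F:=f\circ\ell$, so $F(X^s\hat g(X^n))\in\C[X^n]$. Substituting $X\mapsto\zeta X$ for $\zeta^n=1$ and comparing both sides gives $F(\zeta^s Y)=F(Y)$ as polynomials in $Y$; as $\zeta$ ranges over all $n$-th roots of unity, $\zeta^s$ ranges over the group of $(n/d)$-th roots of unity, so every nonvanishing coefficient of $F$ has degree divisible by $n/d$. Hence $F=\hat f(X^{n/d})$ for some $\hat f$, yielding $f=\hat f(X^{n/\gcd(n,s)})\circ\ell\iter{-1}$. The main obstacle is isolating the correct form of $\ell_2$ from Lemma~\ref{Halbchar}; once this is done the rest is a direct coefficient computation.
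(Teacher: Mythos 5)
Your argument follows essentially the same route as the paper's: both obtain $\zeta X\in\Gamma(g)$ via (\ref{multipledeg}.3), apply Lemma~\ref{Halbchar} to normalize $g$ up to equivalence as $X^s\tilde g(X^m)$, use $\zeta X\in\Gamma(g)$ to force the inner linear to be a scalar, and then deduce $f\circ\ell\in\C[X^{n/\gcd(n,s)}]$ from the invariance under $X\mapsto\zeta X$. The only small oversight is that Lemma~\ref{Halbchar} treats the case $\Gamma(g)$ infinite (i.e.\ $g$ cyclic) separately from the finite cyclic case you quote; your scalar argument and the final form $g=\ell\circ X^s\hat g(X^n)$ carry over verbatim there (with $\hat g$ a monomial), so this is a one-line fix.
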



\begin{proof}
  Let $\zeta$ be a primitive $n\tth$ root of unity.
  If $f\circ g\in\C[X^n]$ then $f\circ g = f\circ g(\zeta X)$, so by
  Corollary~\ref{multipledeg} we have $g=\tilde\ell\circ g(\zeta X)$ for some
  linear $\tilde\ell$.  Thus $\zeta X\in\Gamma(g)$, so Lemma~\ref{Halbchar}
  implies $g=\ell\circ X^s \hat g(X^n)\circ \hat\ell$ where $s\ge 0$ and
  $\hat g,\ell,\hat\ell\in\C[X]$ with $\ell,\hat\ell$ linear; moreover,
  $\zeta X = \hat\ell\circ \hat\zeta \hat\ell\iter{-1}$ for some
  $\hat\zeta\in\C^*$, whence $\hat\zeta=\zeta$ and $\hat\ell=\gamma X$.  Now
  $\ell\iter{-1}\circ g(\zeta X)=\zeta^s\ell\iter{-1}\circ g$, so
  $f\circ g = f\circ g(\zeta X) =
   f\circ \ell\circ \zeta^s\ell\iter{-1}\circ g$.
  Thus $f=f\circ\ell\circ\zeta^s\ell\iter{-1}$, so
  $f\circ\ell=(f\circ\ell)\circ\zeta^s X$, whence
  $f\circ\ell\in\C[X^{n/\gcd(n,s)}]$.
\end{proof}

\begin{remark}
  Corollary~\ref{even} was proved for $n=2$ in \cite[Prop.~1 and Thm.~1]{HR},
  and for prime $n$ in \cite[Thm.~4.3]{H}.
\end{remark}

We now determine the equivalences between polynomials of the form $X^s h(X^n)$.
Note that a polynomial can be written in this form with different values of
$s$, $n$, and $h$; we now show that composing with linears does not introduce
any essentially different expressions of this form.

\begin{lemma} \label{gatherez}
  Suppose $f:=X^s h(X^n)$ and $g:=X^r \hat{h}(X^m)$ satisfy
  $f=\ell_1\circ g\circ\ell_2$, where $h,\hat{h}\in\C[X]\setminus X\C[X]$,
  the $\ell_i$ are linear, and $m,n>1$ and $r,s>0$.  Then $r=s$ and
  $h\in\C[X^{m/\gcd(m,n)}]$, and moreover if $f$ is nonlinear then
  $\ell_1=\gamma X$ and $\ell_2=\alpha X$ with $\alpha,\gamma\in\C^*$.
\end{lemma}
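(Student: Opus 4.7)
My plan is to exploit the self-equivalence group $\Gamma$ together with Lemma~\ref{Halbchar}. First I dispose of the degenerate case: if $f$ is linear then since $s\ge 1$, $n>1$, and $h(0)\ne 0$ (as $h\notin X\C[X]$), we must have $s=1$ with $h$ constant, and symmetrically $r=1$ with $\hat h$ constant; hence $r=s$, $h\in\C[X^{m/\gcd(m,n)}]$ holds trivially, and the moreover clause is vacuous. So I assume henceforth that $f$ (and hence $g$) is nonlinear.

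The next step is the general identity $\Gamma(f)=\ell_2\iter{-1}\circ\Gamma(g)\circ\ell_2$. Indeed, for $\nu\in\Gamma(f)$ with $\tilde\nu\circ f=f\circ\nu$, substituting $f=\ell_1\circ g\circ\ell_2$ yields $(\ell_1\iter{-1}\circ\tilde\nu\circ\ell_1)\circ g=g\circ(\ell_2\circ\nu\circ\ell_2\iter{-1})$, so $\ell_2\circ\nu\circ\ell_2\iter{-1}\in\Gamma(g)$; the reverse inclusion is symmetric.

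To force $\ell_2$ to fix the origin, I would use that every element of $\Gamma(f)$ has the form $\alpha X$. This follows from Lemma~\ref{Halbchar}: writing $h(X)=\tilde h(X^N)$ with $N$ maximal, either $\tilde h$ is a nonzero constant (in which case $f=cX^s$ is cyclic and $\Gamma(f)=\{\alpha X:\alpha\in\C^*\}$), or $\tilde h$ is not a monomial and $f=X^s\tilde h(X^{nN})$ is already in the canonical form of Lemma~\ref{Halbchar}, giving $\Gamma(f)=\{\alpha X:\alpha^{nN}=1\}$. On the other hand, from $g=X^r\hat h(X^m)$ one verifies directly that $\{\beta X:\beta^m=1\}\subseteq\Gamma(g)$ (with conjugating linear $\beta^r X$). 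Writing $\ell_2(X)=\alpha_0 X+\beta_0$, a short calculation gives
\[
  \ell_2\iter{-1}\circ\beta X\circ\ell_2 \;=\; \beta X + \frac{(\beta-1)\beta_0}{\alpha_0};
\]
applying this with an $m$-th root of unity $\beta\ne 1$ and using that the result must lie in $\Gamma(f)\subseteq\{\alpha X:\alpha\in\C^*\}$ forces $\beta_0=0$, so $\ell_2=\alpha_0 X$.

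With $\ell_2$ determined, I would expand $g(\alpha_0 X)=\sum_i c_i\alpha_0^{r+mi}X^{r+mi}$ (where $\hat h=\sum_i c_i X^i$ has $c_0\ne 0$); since $r>0$, this has no constant term. Matching constants in $f=\ell_1\circ g(\alpha_0 X)$ then forces the translation part of $\ell_1$ to vanish (as $f$ has no constant term because $s>0$), so $\ell_1=\gamma X$. Matching lowest-degree nonzero terms now gives $r=s$. Finally, every exponent appearing in $f$ must lie simultaneously in $s+m\Z$ (from the $g$-side) and $s+n\Z$ (from the $f$-side), hence in $s+\lcm(m,n)\Z$; writing $h=\sum_j d_j X^j$, this means $d_j=0$ unless $nj\in m\Z$, equivalently $(m/\gcd(m,n))\mid j$, yielding $h\in\C[X^{m/\gcd(m,n)}]$. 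The main subtlety is to handle the cyclic and non-cyclic subcases for $f$ uniformly; in both, Lemma~\ref{Halbchar} constrains $\Gamma(f)$ to consist entirely of maps fixing the origin, which is exactly what makes the computation above go through.
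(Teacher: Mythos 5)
Your proof is correct, but it takes a genuinely different (and longer) route than the paper's. The paper normalizes $\ell_1$ and $\ell_2$ in one stroke by observing that both $f=X^sh(X^n)$ and $g=X^r\hat h(X^m)$ lack a term of degree $\deg(f)-1$ (since $n,m>1$ make all exponent gaps at least $2$) and lack a constant term (since $r,s>0$); applying $\ell_1\circ(\cdot)\circ\ell_2$ would reintroduce the degree-$(\deg f-1)$ term unless $\ell_2$ fixes $0$, and then a constant term unless $\ell_1$ does too. That is two lines. You instead conjugate $\Gamma(g)\supseteq\{\beta X:\beta^m=1\}$ by $\ell_2$ into $\Gamma(f)$ and invoke Lemma~\ref{Halbchar} to see that $\Gamma(f)$ consists only of homotheties, concluding $\ell_2(0)=0$; then you handle $\ell_1$ via the constant term. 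This works, and every step checks out, but it is something of a detour: the proof of Lemma~\ref{Halbchar} itself already runs through exactly the same coefficient-normalization trick the paper uses directly here, so you are effectively re-deriving the key fact through a lemma that packages it. Your treatment of the degenerate linear case and the final exponent bookkeeping ($m\mid nj$ iff $m/\gcd(m,n)\mid j$) match the paper's conclusions and are fine. The one spot worth tightening: when you appeal to Lemma~\ref{Halbchar} you should say explicitly that, because $f=X^sh(X^n)$ is already monic up to scalar, has no constant term, and no term of degree $\deg(f)-1$, the normalizing $\ell_2$ in Halbchar's statement may be taken to be the identity, which is what makes $\Gamma(f)$ literally a set of maps $\alpha X$ rather than a $\ell_2$-conjugate thereof.
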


\begin{proof}
  We may assume $\deg(f)>1$, since otherwise the conclusion visibly holds.
  Since neither $f$ nor $g$ has a term of degree $(\deg(f)-1)$ or $0$, we must
  have $\ell_2=\alpha X$ and $\ell_1=\gamma X$ with $\alpha,\gamma\in\C^*$.
  Thus $X^s h(X^n)=\gamma\alpha^r X^r \hat{h}(\alpha^m X^m)$, so $r=s$ and
  $h(X^n)\in\C[X^n]\cap\C[X^m]=\C[X^{\lcm(m,n)}]$, which implies the result.
\end{proof}

There can be nontrivial equivalences between polynomials of the form
$X^s h(X)^n$: for instance, $X^2 (X+1)^3 = X^3 (X-1)^2 \circ (X+1)$.  We now
give a presentation of a polynomial which displays all such equivalences.

\begin{lemma} \label{together}
  Pick $h\in\C[X]\setminus X\C[X]$ and coprime $n>1$ and $s>0$,
  and suppose that $f:=X^s h(X)^n$ is neither linear nor cyclic nor dihedral.
  Then $f=\tilde h(X)^{mq}\prod_{i=1}^k (X-\beta_i)^{mr_iq/q_i}$ where
  $\tilde h\in\C[X]$,\, $m,r_i>0$,\, $q_i>1$,\, $q=\prod_{i=1}^k q_i$,\,
  $\gcd(q_i,r_iq/q_i)=1$,\, the $\beta_i\in\C$ are distinct, and
  $\tilde h(\beta_i)\ne 0$.  Moreover, there is an expression of $f$ in this
  form for which the following holds:  for any linear $\ell_1,\ell_2\in\C[X]$
  such that $\ell_1\circ f\circ\ell_2=X^{\hat s} \hat{h}(X)^{\hat n}$ with
  $\hat{h}\in\C[X]\setminus X\C[X]$ and coprime $\hat n>1$ and $\hat s>0$,
  there exists $i$ such that $\ell_1=\gamma X$ and $\ell_2=\alpha X+\beta_i$
  with $\gamma,\alpha\in\C^*$, where $\hat s=mr_i q/q_i$ and $\hat n\mid q_i$.
\end{lemma}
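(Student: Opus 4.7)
The plan is to construct the asserted canonical expression directly from the root-multiplicity data of $f$, and then establish the uniqueness statement by a Riemann--Hurwitz argument that exploits the hypothesis that $f$ is not dihedral.

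For the construction, factor $f=c\prod_\alpha(X-\alpha)^{d_\alpha}$ over the distinct roots $\alpha\in\C$. For each such $\alpha$, put $D_\alpha:=\gcd_{\alpha'\ne\alpha}d_{\alpha'}$, and let $q_\alpha$ be the largest divisor of $D_\alpha$ coprime to $d_\alpha$. Call $\alpha$ \emph{special} if $q_\alpha>1$, label the special roots $\beta_1,\dots,\beta_k$, and set $q_i:=q_{\beta_i}$. If a prime $p$ divided both $q_i$ and $q_{i'}$ with $i\ne i'$, then $p$ would divide $D_{\beta_i}$ and hence $d_{\beta_{i'}}$, contradicting $\gcd(q_{i'},d_{\beta_{i'}})=1$; so the $q_i$ are pairwise coprime. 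Setting $q:=\prod_i q_i$, pairwise coprimality yields $q\mid d_\alpha$ for every non-special root $\alpha$ and $q/q_i\mid d_{\beta_i}$. With $r_i:=d_{\beta_i}q_i/q$ and $\tilde h(X):=c^{1/q}\prod_{\alpha\text{ non-special}}(X-\alpha)^{d_\alpha/q}$ (for any chosen $q$-th root of $c$), taking $m=1$ gives the asserted expression, with all side-conditions immediate from the construction. Note that $\beta_1=0$ is always among the $\beta_i$'s, since $\gcd(s,n)=1$ forces $q_0\ge n>1$.

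For the uniqueness, suppose $\ell_1\circ f\circ\ell_2=g$ with $g=X^{\hat s}\hat h(X)^{\hat n}$ as in the hypotheses. Writing $\ell_2=\alpha X+\beta$ and $B:=f(\beta)=\ell_1\iter{-1}(0)$, the substitution $Y=\ell_2(X)$ yields $f(Y)-B=(\text{const})(Y-\beta)^{\hat s}\hat h((Y-\beta)/\alpha)^{\hat n}$, so $e_f(\beta)=\hat s$ and every other point of $f^{-1}(B)$ has ramification index divisible by $\hat n$. The crux of the proof is to show $B=0$. Assume not, let $N:=\deg f$, and let $k_h$ denote the number of distinct roots of $h$. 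The Riemann--Hurwitz contributions at the branch values $0$ and $B$ are $N-1-k_h$ and $N-1-j$ respectively (where $j+1:=|f^{-1}(B)|$), so $k_h+j\ge N-1$. On the other hand, each of the $j$ non-$\beta$ preimages of $B$ is a critical point of $f$ with $f\ne 0$, hence a zero of $sh+nXh'$ not in $\{0\}\cup\{\text{roots of }h\}$; since $\deg(sh+nXh')=\deg h$ and each root of $h$ of multiplicity $e$ contributes multiplicity $e-1$ to $sh+nXh'$, there are at most $k_h$ such ``new'' critical points, giving $j\le k_h$, with $j+1\le k_h$ if additionally $\hat s\ge 2$. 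If $\hat s\ge 2$ one then gets $k_h\ge N/2$, incompatible with $k_h\le\deg h=(N-s)/n$ since $s>0$ and $n\ge 2$. If $\hat s=1$ one gets $k_h\ge(N-1)/2$, which combined with $k_h\le(N-s)/n$ forces $(s,n)=(1,2)$ and equality throughout; this forces $h$ to have simple roots, $\hat n=2$, and makes $0$ and $B$ the only finite branch values, both with ramification structure one simple preimage together with $k_h$ preimages each of ramification exactly $2$. Lemma~\ref{ramchar} then shows $f$ is dihedral, contradicting the hypothesis. Hence $B=0$, so $\ell_1=\gamma X$.

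With $B=0$, $\beta$ is a root of $f$ of multiplicity $\hat s$ and every other root of $f$ has multiplicity divisible by $\hat n>1$. Thus $\hat n\mid D_\beta$ and $\gcd(\hat n,d_\beta)=1$, which forces $\hat n\mid q_\beta$; in particular $q_\beta>1$, so $\beta=\beta_i$ for some $i$, $\hat s=d_{\beta_i}=r_iq/q_i$, $\hat n\mid q_i$, and $\ell_2=\alpha X+\beta_i$, as required. The main obstacle lies precisely in the case $B\ne 0$ of the uniqueness step: the Riemann--Hurwitz inequality rules this out easily except at the borderline $(s,n)=(1,2)$ case, where one must invoke the dihedral characterization of Lemma~\ref{ramchar} to complete the contradiction.
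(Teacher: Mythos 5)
Your proof is correct, and the overall strategy matches the paper's: build the canonical factorization from the root multiplicities of $f$, then show $B:=\ell_1\iter{-1}(0)$ must equal $0$ via Riemann--Hurwitz and the dihedral characterization from Lemma~\ref{ramchar}, after which the remaining claims drop out by inspecting multiplicities.

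There are two cosmetic differences worth noting. First, you fix $m=1$ throughout, defining $q_i$ as the largest divisor of $D_{\beta_i}$ coprime to $d_{\beta_i}$, whereas the paper takes $m:=\gcd_\beta e_f(\beta)$ and $q_i:=\gcd_{\beta\ne\beta_i}(e_f(\beta)/m)$. Both give valid expressions; since the lemma only asserts existence of one such expression, $m=1$ is a legitimate choice, and your verification of the side conditions (in particular $\gcd(q_i,r_iq/q_i)=\gcd(q_i,d_{\beta_i})=1$ and $\hat n\mid q_i$ for any divisor $\hat n$ of $D_{\beta_i}$ coprime to $d_{\beta_i}$) is correct. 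Second, for the key bound in the $B\ne 0$ case, you bound $j$ by counting roots of $sh+nXh'$ outside $\{0\}\cup h^{-1}(0)$ and then split into $\hat s\ge 2$ and $\hat s=1$. The paper instead observes directly that the preimages of each of $0$ and $B$ contribute at least $(\deg f-1)/2$ to Riemann--Hurwitz (since $g=X^{\hat s}\hat h(X)^{\hat n}$ has at most $1+(N-1)/2$ preimages of $0$, exactly as for $f$ itself), so if $B\ne 0$ both inequalities are equalities, there are exactly two finite branch points, and every ramification index is $2$. The two routes reach the same borderline $(s,n)=(1,2)$, $\hat s=1$, $\hat n=2$, $h$ squarefree, but the paper's phrasing avoids the derivative computation and the case split. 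Your argument is longer but equally sound.
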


\begin{proof}
  Let $S$ be the set of roots of $f$, so $S$ consists of $0$ and the set of
  roots of $h$.  Since $f$ is neither linear nor cyclic, $h$ is nonconstant, so
  since $h\notin X\C[X]$ it follows that $h$ has nonzero roots.  Each nonzero
  element of $S$ is a ramification point of $f$ with ramification index
  divisible by $n$; also, $e_f(0)=s$.  Put $m:=\gcd(e_f(\beta):\beta\in S)$,
  and let $\beta_1:=0, \beta_2,\dots,\beta_k$ be the elements of $S$ for
  which $q_i:=\gcd(e_f(\beta)/m:\beta\in S\setminus\{\beta_i\})$ satisfies
  $q_i>1$.  Write $R_i:=e_f(\beta_i)/m$, so $\gcd(R_i,q_i)=1$.
  Since $q_i\mid R_j$ for $i\ne j$, we must have $\gcd(q_i,q_j)=1$.  Putting
  $q:=\prod_{i=1}^k q_i$, it follows that
  $f=\tilde h(X)^{mq} \prod_{i=1}^k (X-\beta_i)^{mr_iq/q_i}$ for some
  $\tilde h\in\C[X]$, where $r_i:=R_i/\prod_{j\ne i} q_j$ is a positive integer
  coprime to $q_i$.

  The roots of $f$ contribute at least
  $(\deg(f)-1)/2$ to the Riemann--Hurwitz formula for the cover
  $\pi_f\colon\Line\to\Line$ corresponding to $f$, and if equality holds then
  $\deg(f)$ is odd and every root has multiplicity at most $2$.  Pick linear
  $\ell_1,\ell_2\in\C[X]$ such that
  $\ell_1\circ f\circ\ell_2=X^{\hat s} \hat{h}(X)^{\hat n}$, with
  $\hat{h}\in\C[X]\setminus X\C[X]$ and coprime $\hat n>1$ and $\hat s>0$.
  Then the preimages of $\ell_1\iter{-1}(0)$ under $f$ also contribute at
  least $(\deg(f)-1)/2$ to the Riemann--Hurwitz formula for $\pi_f$.
  But the sum of the Riemann--Hurwitz
  contributions at all finite values is $\deg(f)-1$, so if
  $\ell_1\iter{-1}(0)\ne 0$ then $f$ has precisely two finite branch points,
  and every finite ramification point has ramification index $2$, whence $f$
  is dihedral (by Lemma~\ref{ramchar}), contradiction.  Thus $\ell_1=\gamma X$
  for some $\gamma\in\C^*$.  Next, $\hat n\mid e_f(\beta)$ for every 
  $\beta\in S\setminus\{\ell_2(0)\}$, but $\hat n$ is coprime to
  $\hat s=e_f(\ell_2(0))$,
  so $\ell_2(0)=\beta_i$ and $\hat n\mid q_i$ for some $i$.
  Thus $\ell_2 = \alpha X+\beta_i$ for some $\alpha\in\C^*$, so
  $\gamma f(\alpha X+\beta_i) = X^{\hat s} \hat{h}(X)^{\hat n}$.  Finally,
  equating the ramification indices of $X=0$ on both sides gives
  $\hat s=mr_i q/q_i$.
\end{proof}

Finally, we determine equivalences between $X^s h(X)^n$ and
$X^r \hat{h}(X^m)$.

\begin{lemma}\label{mix}
  Suppose $f:=X^s h(X^n)$ and $g:=X^r \hat{h}(X)^m$ satisfy
  $g=\ell_1\circ f\circ\ell_2$, where $h,\hat{h}\in\C[X]\setminus X\C[X]$,
  the $\ell_i\in\C[X]$ are linear, and $m,n>1$ and $r,s>0$ are such that
  $\gcd(r,m)=\gcd(s,n)=1$.  If $f$ is not linear or dihedral then $r=s$ and
  $h=h_0^m$ for some $h_0\in\C[X]$, and moreover $\ell_1=\gamma X$
  and $\ell_2=\alpha X$ with $\alpha,\gamma\in\C^*$.
\end{lemma}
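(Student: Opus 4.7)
The plan is first to show $\ell_2(0)=0$ (hence $\ell_2=\alpha X$); then $\ell_1=\gamma X$ follows at once from $g(0)=\ell_1(f(0))=\ell_1(0)=0$. Once both maps are homotheties, I will equate the two expressions for $g$ to deduce $r=s$ and $h=h_0^m$.

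To prove $\ell_2(0)=0$, set $x_0:=\ell_2(0)$ and $y_0:=f(x_0)=\ell_1^{-1}(0)$, so multiplicativity of ramification under the linear composition yields $e_f(x_0)=e_g(0)=r$. Assume $x_0\ne 0$ for contradiction. The symmetry $f(\zeta X)=\zeta^s f(X)$ for $\zeta^n=1$ produces $n$ distinct points $\zeta^j x_0$, each of $f$-ramification $r$, with $f(\zeta^j x_0)=\zeta^{js}y_0$. If $y_0=0$, then each $\zeta^j x_0$ is an $f$-preimage of $0$ with ramification $r$, so the $n-1$ points $\ell_2^{-1}(\zeta^j x_0)$ for $j=1,\dots,n-1$ are distinct nonzero $g$-preimages of $0$ of ramification $r$; but Lemma~\ref{twistramchar} applied to $g=X^r\hat h(X)^m$ (with $\gcd(r,m)=1$) makes $X=0$ the unique $g$-preimage of $0$ whose ramification is coprime to $m$, contradicting $n\ge 2$.

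If instead $y_0\ne 0$, then $\gcd(s,n)=1$ makes the values $\zeta^{js}y_0$ distinct, producing via $\ell_1$ a set of $n$ distinct branch points of $g$ (one of which is $0$), each admitting one preimage of ramification $r$ and all others of ramification divisible by $m$. A Riemann--Hurwitz count at each such branch point contributes $\deg g-1-K''_j$, where $K''_j\le(\deg g-r)/m$; summing over the $n$ branch points and using that every other ramification term is nonnegative yields
\[
(n-1)(\deg g-1)\le n(\deg g-r)/m.
\]
Combined with $\deg g\ge r+m$ (valid because the existence of $n\ge 2$ distinct branch points of $g$ forces $\hat h$ to be nonconstant), this inequality rules out all $(m,n)$ except $m=n=2$, $r=1$, in which case Riemann--Hurwitz saturates and $g$ is forced to have exactly two finite branch points with every finite ramification index equal to $2$; by Lemma~\ref{ramchar} this makes $g$ (hence $f$) dihedral, contradicting the hypothesis.

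With $\ell_2=\alpha X$ and $\ell_1=\gamma X$ in hand, substituting into $g=\ell_1\circ f\circ\ell_2$ gives $\gamma\alpha^s X^s h(\alpha^n X^n)=X^r\hat h(X)^m$. Matching $X$-valuations at $0$ (using $h(0),\hat h(0)\ne 0$) forces $r=s$, whence $\hat h(X)^m=\gamma\alpha^s h(\alpha^n X^n)\in\C[X^n]$. Since $\hat h(0)\ne 0$, invariance of the root multiset of $\hat h^m$ under $X\mapsto\zeta X$ forces $\hat h\in\C[X^n]$; writing $\hat h(X)=P(X^n)$ and extracting an $m$-th root in $\C$ yields $h=h_0^m$. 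The main obstacle is the Riemann--Hurwitz count in the case $y_0\ne 0$: the inequality must be sharp enough to rule out every non-dihedral configuration with $n\ge 2$ branch points sharing the ``coprime-to-$m$'' preimage structure, with the borderline case $m=n=2$, $r=1$ demanding the saturation argument and Lemma~\ref{ramchar}.
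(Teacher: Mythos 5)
Your proof is correct, but it takes a genuinely different route from the paper's. The paper argues through the symmetry group $\Gamma$: since $\Gamma(f)$ contains $\zeta X$ for a primitive $n$-th root of unity $\zeta$, the conjugate $\ell:=\ell_2\iter{-1}\circ\zeta\ell_2$ lies in $\Gamma(g)$, and the paper then invokes Lemma~\ref{together} (the classification of self-equivalences of polynomials $X^s h(X)^n$, with Lemmas~\ref{twistcheb} and~\ref{chebeq} handling the cyclic case) to get $\ell(0)=0$, after which $\hat h\in\C[X^n]$ and Lemma~\ref{gatherez} finishes. You instead argue directly with ramification: the $\zeta$-equivariance $f(\zeta X)=\zeta^s f(X)$ produces $n$ points of $f$-ramification index $r$; in your case $y_0=0$ these all lie over one value and Lemma~\ref{twistramchar} kills it immediately, while in the case $y_0\ne 0$ they lie over $n$ distinct branch values and a Riemann--Hurwitz count pins down $m=n=2$, $r=1$, where saturation forces two finite branch points all of index $2$ and Lemma~\ref{ramchar} makes $f$ dihedral. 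Both approaches are valid; yours trades the paper's reliance on the $\Gamma$-machinery and Lemma~\ref{gatherez} for direct Riemann--Hurwitz bookkeeping, which is arguably more self-contained here.

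One step you should make explicit: the claim that each of the $n$ branch values $\ell_1(\zeta^{js}y_0)$ has the fiber pattern ``one preimage of ramification $r$, all others of ramification divisible by $m$'' is only immediate for $j=0$ (from Lemma~\ref{twistramchar} applied to $g$). For $j\ne 0$ it follows because $X\mapsto\zeta^j X$ carries the $f$-fiber over $y_0$ bijectively onto the $f$-fiber over $\zeta^{js}y_0$ preserving ramification indices, so the multiset of $g$-ramification indices over each $\ell_1(\zeta^{js}y_0)$ agrees with that over $0$. That justification should be stated, since the inequality $K''_j\le(\deg g-r)/m$ depends on it. With that filled in, the argument is complete.
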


\begin{proof}
  Since $\Gamma(f)$ contains $\zeta X$ where $\zeta$ is a primitive $n\tth$
  root of unity, $\Gamma(g)$ contains $\ell:=\ell_2\iter{-1}\circ\zeta\ell_2$.
  Here $\ell=\zeta X+\delta$ with $\delta\in\C$, and there is a linear
  $\hat\ell\in\C[X]$ for which
  \[
    X^r \hat{h}(X)^m \circ \ell = \hat\ell\circ X^r \hat{h}(X)^m.
  \]
  If $f$ is not cyclic then Lemma~\ref{together} implies $\hat\ell(0)=0$;
  by equating the roots of multiplicity $s$ on the two sides of the
  above equality, we obtain $\ell(0)=0$.
  If $f$ is cyclic then Lemma~\ref{twistcheb} implies $\hat{h}\in\C^*$,
  so by Lemma~\ref{chebeq} we again have $\ell(0)=\hat\ell(0)=0$.
  Thus, in either case,
  $\hat{h}(\zeta X)$ is a scalar times $\hat{h}(X)$.  Since
  these polynomials have the same nonzero constant term, it follows that
  $\hat{h}=\tilde h(X^n)$ for some $\tilde h\in\C[X]\setminus X\C[X]$.
  Since $\deg(g)=\deg(f)$ is coprime to $n$, and $\deg(g)\equiv r\pmod{n}$,
  we must have $\gcd(r,n)=1$.  Now Lemma~\ref{gatherez} implies that
  $\ell_1=\gamma X$ and $\ell_2=\alpha X$ for some $\alpha,\gamma\in\C^*$, and
  also $r=s$.  Thus $h(X^n)$ is a scalar times $\tilde h(X^n)^m$, so $h$ is an
  $m\tth$ power.
\end{proof}

\begin{remark} \label{justtwo}
  If $f$ and $g$ satisfy the hypotheses of Lemma~\ref{mix}, and $f$ is neither
  linear nor cyclic nor dihedral, then we must have $k=1$ in
  Lemma~\ref{together}.  If $f$ satisfies the hypotheses of
  Lemma~\ref{together}, and $f$ is not a nontrivial power of another
  polynomial, then $m=1$ and $k\le 2$; this applies in particular when $f$ is
  indecomposable.  The non-dihedral hypotheses in the previous two lemmas
  cannot be removed: if $n$ is odd then $T_n=X \tilde h(X^2-2)$ for some
  squarefree $\tilde h\in\C[X]$, and consequently $T_n+2=(X+2)\tilde h(X)^2$
  and $T_n-2=(X-2)\tilde h(-X)^2$.  If $f$ is linear or cyclic then the last
  assertion of Lemma~\ref{together} does not hold.
\end{remark}


\section{Combining multiple Ritt moves}
\label{sec constraints}

Pick $f\in\C[X]$ with $\deg(f)>1$, and let $\cU=(u_1,\dots,u_r)$ and
$\cV=(v_1,\dots,v_s)$ be complete
decompositions of $f$.
By Corollary~\ref{degrees}, $s=r$ and the sequence
$(\deg(u_1),\dots,\deg(u_r))$ is a
permutation of $(\deg(v_1),\dots,\deg(v_r))$.  Thus there is a unique
permutation $\sigma=\sigma_{\cU,\cV}$ of $\{1,2,\dots,r\}$ such that both
\begin{itemize}
  \item $\deg(u_i)=\deg(v_{\sigma(i)})$ for $1\le i\le r$; and
  \item if $1\le i<j\le r$ satisfy $\deg(u_i)=\deg(u_j)$ then
    $\sigma(i)<\sigma(j)$.
\end{itemize}
Here $\sigma$ defines a bijection between $\cU$ and $\cV$,
via $\sigma\colon u_i\mapsto v_{\sigma(i)}$.

In this section we use the permutation $\sigma_{\cU,\cV}$ to obtain
information about the shape of $f$.
We begin with a simple observation:

\begin{lemma} \label{coprime lemma}
  If integers $1\le i<j\le r$ satisfy
  $\sigma_{\cU,\cV}(i)>\sigma_{\cU,\cV}(j)$, then
  $\gcd(\deg(u_i),\deg(u_j))=1$.
\end{lemma}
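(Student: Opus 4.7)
The plan is to reduce the statement to a combinatorial fact about sorting by adjacent transpositions, and then invoke the results already proved in Section~\ref{sec: Ritt1}. By Theorem~\ref{Ritt1}, there is a finite sequence of complete decompositions from $\cU$ to $\cV$ in which each consecutive pair forms a Ritt neighbor. Corollary~\ref{Ritt move lemma} classifies these neighbor steps: either the step is a trivial linear insertion $(a,b)\mapsto (a\circ\ell,\,\ell\iter{-1}\circ b)$, in which case the degrees at the two affected positions are unchanged, or it is a genuine swap in which the two adjacent degrees are interchanged and forced to be coprime.

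Next I will attach labels $1,2,\dots,r$ to the positions of $\cU$ and propagate them through the chain: on a linear insertion the labels stay put, and on a genuine swap the two labels at the affected positions are exchanged. The key structural observation, which follows directly from Corollary~\ref{Ritt move lemma}, is that each label retains its original degree throughout the entire chain, since in a genuine swap the pair of degrees is interchanged in lockstep with the pair of labels. At the end of the chain the label originally at position $i$ occupies some position $\tau(i)$ in $\cV$, defining a permutation $\tau$ of $\{1,\dots,r\}$ with $\deg(u_i)=\deg(v_{\tau(i)})$.

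Then I will identify $\tau$ with $\sigma:=\sigma_{\cU,\cV}$. By the uniqueness of $\sigma$, this reduces to checking that $\tau$ preserves the relative order of positions of equal degree: if $i<j$ and $\deg(u_i)=\deg(u_j)=:d$, then the labels $i$ and $j$ can never be adjacent and swapped, since their common degree $d>1$ is not coprime to itself. Hence their relative order is preserved, forcing $\tau(i)<\tau(j)$, and therefore $\tau=\sigma$.

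With $\tau=\sigma$ in hand, the lemma follows immediately: if $1\le i<j\le r$ and $\sigma(i)>\sigma(j)$, then labels $i$ and $j$ reversed their relative order during the chain, so at some intermediate step they were adjacent and a genuine Ritt swap was applied to them. Since each label still carries its original degree at that moment, the coprimality built into Corollary~\ref{Ritt move lemma} yields $\gcd(\deg(u_i),\deg(u_j))=1$. The main care required, rather than an obstacle, lies in organizing the tracking so that degrees travel with labels and in verifying that $\tau$ matches the order-preservation clause in the definition of $\sigma_{\cU,\cV}$; once this is in place, the conclusion is a one-line consequence of the coprimality assertion of Corollary~\ref{Ritt move lemma}.
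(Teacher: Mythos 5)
Your proof is correct and takes the same approach as the paper: both chain through Ritt neighbors via Theorem~\ref{Ritt1}, classify each step by Corollary~\ref{Ritt move lemma}, and locate the first step at which the relative order of the two indecomposables reverses. Your version spells out the label/degree bookkeeping and the identification of the tracked permutation with $\sigma_{\cU,\cV}$ more explicitly than the paper, which simply picks the minimal $k$ with $\sigma_{\cU,\cU_k}(i)>\sigma_{\cU,\cU_k}(j)$ and leaves that verification implicit.
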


\begin{proof}
  By Theorem~\ref{Ritt1}, there is a finite sequence of complete decompositions
  $\cU=\cU_0$, $\cU_1$, $\dots$, $\cU_m=\cV$ such that $\cU_{k+1}$ is obtained
  from $\cU_k$ by replacing two consecutive indecomposables $a,b$ by two others
  $c,d$ such that $a\circ b=c\circ d$.  In this situation,
  Corollary~\ref{Ritt move lemma} implies that either $\deg(a)=\deg(c)$
  (and $\deg(b)=\deg(d)$) or $\gcd(\deg(a),\deg(b))=\gcd(\deg(c),\deg(d))=1$.
  Pick the minimal $k$ for which $\sigma_{\cU,\cU_k}(i)>\sigma_{\cU,\cU_k}(j)$,
  so $\cU_k$ is obtained from $\cU_{k-1}$ by a Ritt move involving the
  indecomposables of $\cU_{k-1}$ which correspond to $u_i$ and $u_j$, whence
  these indecomposables have coprime degrees.
\end{proof}

We need more notation to state our results.  For $1\le i,j\le r$, define
\begin{align*}
  \LL(\cU,\cV,i,j)&=\{k: 1\le k<i,\,\sigma(k)<\sigma(j)\}; \\
  \LR(\cU,\cV,i,j)&=\{k: 1\le k<i,\,\sigma(k)>\sigma(j)\}; \\
  \RL(\cU,\cV,i,j)&=\{k: i<k\le r,\,\sigma(k)<\sigma(j)\}; \\
  \RR(\cU,\cV,i,j)&=\{k: i<k\le r,\,\sigma(k)>\sigma(j)\}.
\end{align*}
Thus, for instance, $\LR(\cU,\cV,i,j)$ is the set of positions of
indecomposables in $\cU$ which lie to the left of $u_i$, but which correspond
to indecomposables in $\cV$ lying to the right of the indecomposable
corresponding to $u_j$.  We also write
\[
LL(\cU,\cV,i,j) = \prod_{k\in \LL(\cU,\cV,i,j)} \deg(u_k),
\]
and define $LR(\cU,\cV,i,j)$, $RL(\cU,\cV,i,j)$, and
$RR(\cU,\cV,i,j)$ analogously.

\begin{prop} \label{clumps}
  Pick two complete decompositions $\cU=(u_1,\dots,u_r)$ and $\cV$ of some
  polynomial $f\in\C[X]$, and pick $k$ with $1\le k\le r$.  Write
  $LL=LL(\cU,\cV,k,k)$, and define $LR$, $RL$, and $RR$ analogously.
  Then $LR$, $RL$ and\/ $\deg(u_k)$ are pairwise coprime, and
  there exist polynomials
  \begin{align*}
    a& \quad\text{of degree } LL, \\
    d& \quad\text{of degree } RR, \\
    b,\hat{b},\tilde{b},\dot{b}& \quad\text{of degree } LR, \\
    c,\tilde{c},\bar{c},\dot{c}& \quad\text{of degree } RL, \text{ and} \\
    \hat{u},\tilde{u},\bar{u}& \quad\text{indecomposable of the same degree
      as } u_k
  \end{align*}
  such that
  \begin{enumerate}
    \item[(\thethm.1)] $u_1\circ u_2\circ\dots\circ u_{k-1}=a\circ b$ \,and \,
                         $u_{k+1}\circ\dots\circ u_r = c\circ d$;
    \item[(\thethm.2)] $b\circ u_k = \hat{u}\circ\hat{b}$;
    \item[(\thethm.3)] $\hat{u}\circ\hat{b}\circ c =
                         \tilde{c}\circ\tilde{u}\circ\tilde{b}$;
    \item[(\thethm.4)] $u_k\circ c=\bar{c}\circ\bar{u}$;\, \text{ and}
    \item[(\thethm.5)] $b\circ\bar{c}\circ\bar{u} =
                          \dot{c}\circ\tilde{u}\circ\dot{b}$.
  \end{enumerate}
\end{prop}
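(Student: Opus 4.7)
The plan is to translate everything into the lattice $\cJ$ of subgroups of the cyclic inertia group $I$ from Section~\ref{sec: Ritt1}. Writing $G=\Mon(f)$, $H$ for the stabilizer of a root of $f(X)-t$, and $I$ for the cyclic complement with $G=HI$ and $H\cap I=1$, Corollary~\ref{chains} and Lemma~\ref{L:G=HI} identify $\cU$ and $\cV$ with maximal chains $1=J_0^\cU<\dots<J_r^\cU=I$ and $1=J_0^\cV<\dots<J_r^\cV=I$ in $\cJ$, where $|J_j^\cU|=\prod_{i>j}\deg u_i$ and analogously for $\cV$. I aim to read the polynomials appearing in (\ref{clumps}.1)--(\ref{clumps}.5) off four chains in $\cJ$ with common endpoints, built from product and intersection subgroups formed near the levels $k$ and $\sigma(k)$.

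The coprimality of $LR$, $RL$, and $\deg(u_k)$ follows from Lemma~\ref{coprime lemma}: the pairs $(k,i)$ with $i\in RL$ and $(j,k)$ with $j\in LR$ are inversions of $\sigma$, and for $j\in LR$, $i\in RL$ we have $j<k<i$ with $\sigma(j)>\sigma(i)$, an inversion as well. Exploiting $|XY|=\lcm(|X|,|Y|)$ in cyclic $I$, define
\[
  J_*:=J_k^\cU\cap J_{\sigma(k)}^\cV,\ A:=J_k^\cU\cdot J_{\sigma(k)}^\cV,\ B:=J_{k-1}^\cU\cdot J_{\sigma(k)}^\cV,\ J^\dagger:=J_{k-1}^\cU\cap J_{\sigma(k)-1}^\cV,
\]
all in $\cJ$ by (\ref{L:G=HIcor}.1) and (\ref{L:G=HIcor}.4), with respective orders $RR$, $LR\cdot RL\cdot RR$, $\deg(u_k)\cdot LR\cdot RL\cdot RR$, and $\deg(u_k)\cdot RR$; moreover $B=J_k^\cU\cdot J_{\sigma(k)-1}^\cV=J_{k-1}^\cU\cdot J_{\sigma(k)-1}^\cV$ since all three products have the same order in cyclic $I$. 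Choosing arbitrary maximal refinements in $\cJ$ below $J_*$ (giving $d$ of degree $RR$) and above $B$ (giving $a$ of degree $LL$), the four inner chains from $J_*$ to $B$,
\begin{align*}
  &J_*\subset J_k^\cU\subset J_{k-1}^\cU\subset B,\quad J_*\subset J_k^\cU\subset A\subset B,\\
  &J_*\subset J_{\sigma(k)}^\cV\subset J_{\sigma(k)-1}^\cV\subset B,\quad J_*\subset J^\dagger\subset J_{k-1}^\cU\subset B,
\end{align*}
produce four complete decompositions of $f$ via Corollary~\ref{chains}. Reading off the segments: (\ref{clumps}.1) takes $b$ as the step $J_{k-1}^\cU\subset B$ (degree $LR$) and $c$ as the step $J_*\subset J_k^\cU$ (degree $RL$); (\ref{clumps}.2) and (\ref{clumps}.4) come from comparing the first chain with the second (on $J_k^\cU\subset\dots\subset B$) and with the fourth (on $J_*\subset\dots\subset J_{k-1}^\cU$), respectively; (\ref{clumps}.3) and (\ref{clumps}.5) come from comparing the second and fourth chains with the third on the full segment $J_*\subset\dots\subset B$.

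The main obstacle is to show that $\hat{u}$, $\tilde{u}$, $\bar{u}$ are indecomposable, i.e., that the steps $A\subset B$, $J_{\sigma(k)}^\cV\subset J_{\sigma(k)-1}^\cV$, and $J_*\subset J^\dagger$ admit no strict intermediate in $\cJ$. For $\tilde{u}$ this is immediate since its step is precisely the indecomposable step of $v_{\sigma(k)}$. For $\hat{u}$, a hypothetical $J\in\cJ$ with $A\subsetneq J\subsetneq B$ has $|J|=m\cdot LR\cdot RL\cdot RR$ for some proper divisor $m$ of $\deg(u_k)$; then $J\cap J_{\sigma(k)-1}^\cV\in\cJ$, and using $\gcd(m\cdot RL,\deg u_k)=m$ one computes its order to be $m\cdot LR\cdot RR$, strictly between $|J_{\sigma(k)}^\cV|$ and $|J_{\sigma(k)-1}^\cV|$, contradicting indecomposability of $v_{\sigma(k)}$. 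An analogous argument handles $\bar{u}$: a strict intermediate $J$ in $J_*\subset J^\dagger$ produces, upon forming $J\cdot J_{\sigma(k)}^\cV\in\cJ$, a strict intermediate in the step of $v_{\sigma(k)}$, again a contradiction.
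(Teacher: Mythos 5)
Your proposal is correct but takes a genuinely different route from the paper. The paper's proof applies the already-proved Lemma~\ref{gcdlem} twice (once to $g\circ u_k\circ h = \tilde g\circ\tilde u\circ\tilde h$, once to a refinement) together with Corollary~\ref{multipledeg} to produce the five identities, and proves indecomposability of $\hat u,\tilde u,\bar u$ by invoking the degree invariant (Corollary~\ref{degrees}): a complete decomposition of $\hat b$ has no factor of degree $\deg u_k$ since $\gcd(LR,\deg u_k)=1$, so $\hat u$ must itself be that factor. You instead work directly in the lattice $\cJ$ of subgroups of the cyclic inertia group $I$: you build the key subgroups $J_*$, $A$, $B$, $J^\dagger$ explicitly as products and intersections of the nodes $J_k^\cU$, $J_{k-1}^\cU$, $J_{\sigma(k)}^\cV$, $J_{\sigma(k)-1}^\cV$, compute their orders using $\abs{XY}=\lcm$, $\abs{X\cap Y}=\gcd$ and the coprimality of $LR$, $RL$, $\deg u_k$, and extract the five identities by comparing four chains from $J_*$ to $B$ that all have the same tails below $J_*$ and above $B$. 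Your indecomposability argument is also lattice-theoretic: a hypothetical intermediate $J\in\cJ$ in the step $A\subsetneq B$ or $J_*\subsetneq J^\dagger$ gets pushed, via $J\cap J_{\sigma(k)-1}^\cV$ or $J\cdot J_{\sigma(k)}^\cV$, to a strict intermediate in the step of $v_{\sigma(k)}$, which contradicts the indecomposability of $v_{\sigma(k)}$. Both routes rest on the same Section~2 machinery; yours is more explicit about what is happening in the subgroup lattice, while the paper's is shorter because it leans on the lemmas already proved in Section~2.

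Three small points to tighten up. First, the four inner chains are not in general maximal (a step of degree $LR$ or $RL$ is typically composite), so they give \emph{decompositions} of $f$, not complete decompositions; the word ``complete'' should be dropped. Second, when you ``read off'' a polynomial for a step that occurs in two of your chains and declare the two readings equal (e.g.\ $c$ from chain~1 versus from chain~2, or $b$ from chain~1 versus chain~4), this needs a consistent choice of generator for each intermediate field $L^{HJ}$; Lemma~\ref{luroth} only determines the polynomials up to left-composition with a linear, and the paper absorbs this ambiguity inside Lemma~\ref{gcdlem}. The fix is routine --- fix a generator $y_J$ of each field $L^{HJ}$ occurring, with $y_{J_*}=d(x)$ and $y_I=t$, and define the step polynomial to carry $y_J$ to $y_{J'}$ --- but it should be said. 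Third, in the intermediate-subgroup argument you should require the proper divisor $m$ of $\deg u_k$ to satisfy $m>1$ (which is automatic since $J\supsetneq A$), so that the order $m\cdot LR\cdot RR$ of $J\cap J_{\sigma(k)-1}^\cV$ is strictly between $\abs{J_{\sigma(k)}^\cV}$ and $\abs{J_{\sigma(k)-1}^\cV}$ on both sides.
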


\begin{proof}
  The coprimality assertions follow from Lemma~\ref{coprime lemma}.  Write
  $p=\deg(u_k)$.  Put $g=u_1\circ\dots\circ u_{k-1}$ and
  $h=u_{k+1}\circ\dots\circ u_r$.  Then $f=g\circ u_k\circ h$ and
  $\deg(g)=LL\cdot LR$ and $\deg(h)=RL\cdot RR$.  Likewise, letting $\tilde{u}$
  denote the indecomposable in $\cV$ corresponding to $u_k$, from $\cV$ we get
  $f=\tilde g\circ \tilde{u}\circ \tilde h$ where $\tilde g,\tilde h\in\C[X]$
  have degrees $LL\cdot RL$ and $LR\cdot RR$, respectively.  By
  Lemma~\ref{gcdlem}, there are $a,b,\hat g\in\C[X]$
  such that $g=a\circ b$ and $\tilde g=a\circ \hat g$, where
  $\deg(a)=\gcd(\deg(g),\deg(\tilde g))$; since $\gcd(LR,RL)=1$, this means
  $\deg(a)=LL$ (so $\deg(b)=LR$).  Likewise, there are $c,d,\hat h\in\C[X]$
  such that $h=c\circ d$ and $\tilde h=\hat h\circ d$, where $\deg(d)=RR$ and
  $\deg(c)=RL$.  This proves (\ref{clumps}.1).
  
  Applying Lemma~\ref{gcdlem} to
  $(a\circ b\circ u_k)\circ (c\circ d) =
   (a\circ \hat g\circ\tilde{u})\circ (\hat h\circ d)$, we obtain
  $a_0,\hat{b}\in\C[X]$ such that $a\circ b\circ u_k = a_0\circ \hat{b}$
  and $\deg(a_0)=\gcd(LL\cdot LR\cdot p, LL\cdot RL\cdot p)=LL\cdot p$, so
  $\deg(\hat{b})=\deg(b)$.  Applying Corollary~\ref{multipledeg} to
  $a\circ (b\circ u_k)= a_0\circ \hat{b}$ gives
  $b\circ u_k = \hat{u}\circ\hat{b}$ for some $\hat{u}\in\C[X]$.
  A complete decomposition of $\hat{b}$ contains no indecomposable of
  degree $\deg(u_k)$ (since $\deg(\hat{b})=LR$ is coprime to $\deg(u_k)$),
  so Corollary~\ref{degrees} implies $\hat{u}$ is indecomposable, which
  proves (\ref{clumps}.2).
  
  Next recall that $f=a\circ (\hat g\circ\tilde{u}\circ \hat h\circ d)$ where
  $\deg(\hat g)=\deg(c)$ and $\deg(\hat h)=\deg(b)$.  Since also
  $f=a\circ(\hat{u}\circ\hat{b}\circ c\circ d)$, by Corollary~\ref{multipledeg}
  there is a linear $\ell\in\C[X]$ such that
  $\ell\circ \hat g\circ\tilde{u}\circ \hat h\circ d =
   \hat{u}\circ\hat{b}\circ c\circ d$.
  Putting $\tilde{c}=\ell\circ \hat g$ and $\tilde{b}=\hat h$, we obtain
  $\tilde{c}\circ\tilde{u}\circ\tilde{b}\circ d =
   \hat{u}\circ\hat{b}\circ c\circ d$.  As above,
  Corollary~\ref{degrees} implies $\tilde{u}$ is indecomposable, which proves
  (\ref{clumps}.3).
  
  Assertions (\ref{clumps}.4) and (\ref{clumps}.5) follow by symmetry.
\end{proof}

Proposition~\ref{clumps} enables us to control the cumulative
effect of a sequence of Ritt moves.  We do this in three results:
Proposition~\ref{mess} addresses the case that some indecomposable is
neither cyclic nor dihedral; Proposition~\ref{movecheb} the case that
some indecomposable is dihedral; and the easier Lemma~\ref{cyclicmove}
handles the case that every indecomposable is cyclic.

\begin{prop} \label{mess}
  Let $\cU=(u_1,\dots,u_r)$ and $\cV$ be two complete decompositions of
  $f\in\C[X]$.  Pick $k$ with $1\le k\le r$, and put
  $n=LR(\cU,\cV,k,k)$ and $m=RL(\cU,\cV,k,k)$.
  If $u_k$ is neither cyclic nor dihedral, then there exist
  $a,d,\tilde{h}\in\C[X]$ and $\beta,\delta\in\C$ and $s\ge 0$ with
  $\gcd(s,mn)=1$ such that
  \begin{gather*}
    u_1\circ\dots\circ u_{k-1} = a\circ X^n\circ (X+\delta)\\
    u_k = (X-\delta)\circ X^s\tilde{h}(X^n)^m\circ (X+\beta)\\
    u_{k+1}\circ\dots\circ u_r = (X-\beta)\circ X^m\circ d.
  \end{gather*}
  In particular, $mn<\deg(u_k)$.
\end{prop}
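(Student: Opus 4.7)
The plan is to apply Proposition~\ref{clumps} at index $k$, then Ritt's second theorem on each side of $u_k$, and finally glue the two presentations of $u_k$ via Lemma~\ref{mix}.

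Proposition~\ref{clumps} supplies polynomials $a_0, b, c_0, d_0$ and indecomposables $\hat u, \bar u$ of the same degree as $u_k$ satisfying $u_1\circ\cdots\circ u_{k-1} = a_0\circ b$ with $\deg(b)=n$; $u_{k+1}\circ\cdots\circ u_r = c_0\circ d_0$ with $\deg(c_0)=m$; $b\circ u_k = \hat u\circ\hat b$; and $u_k\circ c_0 = \bar c\circ\bar u$. Moreover the integers $m$, $n$, and $\deg(u_k)$ are pairwise coprime. Assume $m,n\ge 2$ (the cases $m=1$ or $n=1$ reduce to simpler single-sided versions of the argument below).

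To $b\circ u_k = \hat u\circ\hat b$ I would apply Theorem~\ref{Ritt2}. Of its four possible outcomes (the forms (\ref{Ritt2}.1) and (\ref{Ritt2}.2), each with or without the allowed swap of pairs), three would force $u_k$ to be equivalent to $T_{\deg(u_k)}$ or to $X^{\deg(u_k)}$, making $u_k$ cyclic or dihedral and contradicting the hypothesis. Only the unswapped form (\ref{Ritt2}.2) survives, producing linears that reduce $b$ to $X^n$ and $u_k$ to $X^s h(X^n)$ for some $s\ge 0$ with $\gcd(s,n)=1$ and some $h\in\C[X]\setminus X\C[X]$. Symmetrically, applying Theorem~\ref{Ritt2} to $u_k\circ c_0 = \bar c\circ\bar u$ and excluding the cyclic/dihedral outcomes forces the swapped form (\ref{Ritt2}.2), which presents $c_0$ as $X^m$ and $u_k$ as $X^{s'}h'(X)^m$ with $\gcd(s',m)=1$.

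Equating the two presentations of $u_k$ produces an identity $X^{s'}h'(X)^m = \ell_1\circ X^sh(X^n)\circ\ell_2$ for some linears $\ell_1,\ell_2$. Lemma~\ref{mix} then applies, since $u_k$ is neither linear nor dihedral by hypothesis; it delivers $s=s'$, $h=h_0^m$ for some $h_0\in\C[X]\setminus X\C[X]$, and forces $\ell_1$ and $\ell_2$ to be pure scalings. Thus $u_k$ is equivalent to $X^s\tilde h(X^n)^m$ with $\tilde h:=h_0$, and $\gcd(s,mn)=1$ because $\gcd(s,n)=\gcd(s,m)=\gcd(m,n)=1$.

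The main obstacle I expect is the precise bookkeeping of linears to obtain the asserted normalizations, ensuring that the same $\delta$ appears both in the expression for $u_1\circ\cdots\circ u_{k-1}$ and as the outer-left translation of $u_k$ (and similarly for $\beta$). After Theorem~\ref{Ritt2} on the left, write the inner linear of $b$ as $\mu\cdot(X+\delta)$ and absorb $\mu$ outward to get $b = a_1\circ X^n\circ(X+\delta)$; set $a := a_0\circ a_1$. The outer-left linear of $u_k$ from the same analysis is then the inverse $(X-\delta)\circ(X/\mu)$, whose residual scaling $(X/\mu)$ can be absorbed into $\tilde h$ by replacing $\tilde h$ with $\lambda^{1/m}\tilde h$ for an appropriate $\lambda\in\C$ (the operation being clean because $\gcd(s,m)=1$). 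Treating the right side symmetrically yields $(X+\beta)$ as the outer-right of $u_k$ together with the form $(X-\beta)\circ X^m\circ d$ for $u_{k+1}\circ\cdots\circ u_r$. Finally, $\deg(u_k) = s + mn\deg(\tilde h)$; since $u_k$ is not cyclic, $\tilde h$ must be nonconstant, so $mn<\deg(u_k)$.
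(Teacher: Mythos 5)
Your proof is correct, and it takes a genuinely different route from the paper's.  Both proofs invoke Proposition~\ref{clumps} and then apply Theorem~\ref{Ritt2} twice — once involving the left factor $b$ and once involving the right factor $c$ — but the second application and the gluing step differ.  The paper applies Ritt's second theorem to $u_k\circ c=\bar c\circ\bar u$ to get the form $X^s h(X)^m$ for $u_k$, and then to the \emph{composite} equation $b\circ(\bar c\circ\bar u)=(\dot c\circ\tilde u)\circ\dot b$ from (\ref{clumps}.5); it then compares the two resulting descriptions of $\bar c\circ\bar u$, both of the shape $X^{\bullet}(\cdot)(X^{\bullet})$, and finishes with Lemma~\ref{gatherez}.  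You instead apply the theorem to the shorter equation $b\circ u_k=\hat u\circ\hat b$ from (\ref{clumps}.2), obtain directly the two presentations $X^s h(X^n)$ and $X^{s'}h'(X)^m$ of $u_k$ itself, and glue them with Lemma~\ref{mix}.  Your route is arguably more symmetric and transparent, since it never leaves $u_k$.  The one place you should write out the detail is the elimination of the other Ritt2 outcomes: for each of the two Ritt2 applications, form (\ref{Ritt2}.1) and the swapped form (\ref{Ritt2}.2) each force $u_k$ itself to be equivalent to $T_N$ or $X^N$ (so no appeal to $\hat u$ or $\bar u$, or to Theorem~\ref{monthm}, is needed — the excluded forms put $u_k$ directly in the cyclic or dihedral slot of the quadruple).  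The bookkeeping of linears you sketch does close up as you describe, because Lemma~\ref{mix} additionally forces the connecting linears to be pure scalings, so the residual scalings can be absorbed into $\tilde h$ by extracting $m$-th roots of scalars; and the coprimality $\gcd(s,mn)=1$ follows from $\gcd(s,n)=1$ (left Ritt2), $s=s'$ and $\gcd(s',m)=1$ (Lemma~\ref{mix} plus right Ritt2), together with $\gcd(m,n)=1$ from Proposition~\ref{clumps}.
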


\begin{proof}
  Let $a,b,c,d,\bar{c},\bar{u},\dot{c},\tilde{u},\dot{b}$ be as in
  Proposition~\ref{clumps}, so $n$ and $m$ are coprime to each other
  and to $\deg(u_k)$.  Since $u_k\circ c=\bar{c}\circ\bar{u}$ and
  $\deg(\bar{c})=\deg(c)=m$ is coprime to $\deg(u_k)$, Theorem~\ref{Ritt2}
  implies (because $u_k$ is not cyclic or dihedral) that
  $u_k=\ell_1\circ X^s h(X)^m\circ\ell_2$ and
  $c=\ell_2\iter{-1}\circ X^m\circ\ell_3$ for some $h\in\C[X]\setminus X\C[X]$,
  some linear $\ell_j\in\C[X]$, and some $s\ge 0$ which is coprime to $m$.
  By replacing $h$ and $\ell_3$ by scalar multiples of themselves, we may
  assume $\ell_1=X-\delta$ and $\ell_2=X+\beta$ with $\beta,\delta\in\C$.
  If $n=1$ then the result follows upon replacing $a$ by
  $a\circ b\circ (X-\delta)$ and $d$ by $\ell_3\circ d$.  A similar argument
  applies if $m=1$, so assume $m,n>1$.
  Since $u_k$ is neither cyclic nor dihedral, Lemma~\ref{chebcheb} implies
  that $u_k\circ c = \bar{c}\circ\bar{u}$ is neither cyclic nor dihedral.
  Now $b\circ (\bar c \circ \bar u) = (\dot c\circ \tilde u)\circ \dot b$,
  and also $\deg(\bar c\circ\bar u)=\deg(\dot c\circ\tilde u)$ is coprime
  to $\deg(b)=n$, so by Theorem~\ref{Ritt2} we have
  $b=\ell_4\circ X^n\circ\ell_5$ and
  $\bar c\circ\bar u=\ell_5\iter{-1}\circ X^{\hat s} \hat h(X^n)\circ\ell_6$
  for some $\hat h\in\C[X]\setminus X\C[X]$, some linear $\ell_j\in\C[X]$, and
  some $\hat s\ge 0$ which is coprime to $n$.  As above, we may assume
  $\ell_5=X+\gamma$ with $\gamma\in\C$.
  Thus
  \[
    (X-\delta)\circ X^{ms} h(X^m)^m\circ \ell_3 = u_k\circ c =
    \bar{c}\circ\bar{u} = (X-\gamma)\circ X^{\hat s} \hat h(X^n)\circ \ell_6,
  \]
  so by Lemma~\ref{gatherez} we have $\gamma=\delta$ and
  $h(X^m)^m\in\C[X^n]$.  Since $\gcd(m,n)=1$, it follows that
  $h\in\C[X^n]$, which gives the result once we replace
  $a$ by $a\circ\ell_4$ and $d$ by $\ell_3\circ d$.
\end{proof}

\begin{prop} \label{movecheb}
  Let $\cU=(u_1,\dots,u_r)$ and $\cV$ be two complete decompositions of some
  $f\in\C[X]$.  Pick $k,i$ such that $1\le k,i\le r$ and $u_k$ is dihedral.
  \begin{enumerate}
    \item[(\thethm.1)] If $i>k$ and $RL(\cU,\cV,i-1,k) > 2$, then
      $u_k\circ u_{k+1}\circ\dots\circ u_i$ is dihedral.
    \item[(\thethm.2)] If $i<k$ and $LR(\cU,\cV,i+1,k) > 2$, then
      $u_i\circ u_{i+1}\circ\dots\circ u_k$ is dihedral.
  \end{enumerate}
\end{prop}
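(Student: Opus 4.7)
The plan is to first produce an explicit Chebyshev form for the tail $u_k\circ u_{k+1}\circ\dots\circ u_r$, then use Lemma \ref{gcdlem} to cut down to the prefix $u_k\circ u_{k+1}\circ\dots\circ u_i$ and show it inherits the dihedral structure. I sketch (1); statement (2) follows by the left-right mirror image.

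Set $n=\deg u_k$ and $m=RL(\cU,\cV,k,k)$. Since $m\ge RL(\cU,\cV,i-1,k)>2$, Proposition \ref{clumps} applied at index $k$ produces $C,d$ with $u_{k+1}\circ\dots\circ u_r=C\circ d$ and $\deg C=m$, together with $u_k\circ C=\bar{C}\circ\bar{u}$ where $\bar{u}$ is indecomposable of degree $n$. Theorem \ref{monthm} and Lemma \ref{cycdi} show that $\bar{u}$ is dihedral, so Lemma \ref{chebritt} applied to the Ritt move $u_k\circ C=\bar{C}\circ\bar{u}$ (with both indecomposables dihedral and degrees coprime) yields $C$ equivalent to $T_m$ and
\[
u_k\circ C=\alpha\circ T_{nm}\circ\beta
\]
for linears $\alpha,\beta$. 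Hence $u_k\circ\dots\circ u_r=\alpha\circ T_{nm}\circ d_*$ where $d_*:=\beta\circ d$.

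Put $g:=u_k\circ\dots\circ u_i$ and $h:=u_{i+1}\circ\dots\circ u_r$, so $g\circ h=\alpha\circ T_{nm}\circ d_*$. Abbreviate $A=\prod_{k<j\le i,\,\sigma(j)<\sigma(k)}\deg u_j$, $B=\prod_{k<j\le i,\,\sigma(j)>\sigma(k)}\deg u_j$, and $m''=\prod_{j>i,\,\sigma(j)<\sigma(k)}\deg u_j$, so $\deg g=nAB$ and $m=Am''$. Lemma \ref{coprime lemma} applied to the pairs $(j',j)$ with $j'\le i<j$ and $\sigma(j)<\sigma(k)<\sigma(j')$ gives $\gcd(B,m'')=1$. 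Applying Lemma \ref{gcdlem} to $g\circ h=(\alpha\circ T_{nm})\circ d_*$, I obtain a common left factor $G$ of degree $\gcd(nAB,nm)=nA$; Lemma \ref{chebcheb} forces $G=\alpha\circ T_{nA}\circ\ell$ for some linear $\ell$, with complementary factor $\hat{t}=\ell^{-1}\circ T_{m''}$. Write $g=G\circ\hat{g}$, and note that Lemma \ref{gcdlem} also supplies $\hat{g}\circ\hat{h}=\hat{t}\circ\hat{d}_*$. Assuming $m''>2$, a second application of Lemma \ref{chebritt} to this Ritt move delivers $\hat{g}=\ell^{-1}\circ(\pm T_B)\circ\hat{\mu}$ for some linear $\hat{\mu}$, where the linear $\ell$ is the same one produced above. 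The two copies of $\ell$ and $\ell^{-1}$ then cancel on forming $g=G\circ\hat{g}=\alpha\circ T_{nA}\circ(\pm T_B)\circ\hat{\mu}=\alpha\circ(\pm T_{nAB})\circ\hat{\mu}$, so $g$ is equivalent to $T_{nAB}$ and hence dihedral.

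The main obstacle is precisely this cancellation: without it, a linear distortion between $T_{nA}$ and $T_B$ would prevent the identity $T_{nA}\circ T_B=T_{nAB}$ from applying. The compatibility is forced because both copies of $\ell$ arise from the single Lemma \ref{chebcheb} decomposition $\alpha\circ T_{nm}=G\circ\hat{t}$. The degenerate cases $m''\in\{1,2\}$ require small modifications: when $B=1$, $\hat{g}$ is linear and $g$ is equivalent to $G$, which is already dihedral; the remaining configurations reduce to the same Chebyshev form after invoking Theorem \ref{Ritt2} in place of Lemma \ref{chebritt} on the residual equation.
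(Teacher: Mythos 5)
Your proof follows the paper's strategy up through the point where Proposition~\ref{clumps} and Lemma~\ref{chebritt} give $u_k\circ C=\alpha\circ T_{nm}\circ\beta$. The divergence, and the gap, is in how the prefix is carved out. You put $u_i$ into the left block $g=u_k\circ\cdots\circ u_i$, so the residual Chebyshev piece $\hat t$ that must drive the second Ritt-move analysis has degree $m''=RL(\cU,\cV,i,k)$. But the hypothesis only supplies $RL(\cU,\cV,i-1,k)>2$, and when $\sigma(i)<\sigma(k)$ these two quantities differ by a factor of $\deg(u_i)$: one can have $RL(\cU,\cV,i-1,k)>2$ while $m''\in\{1,2\}$ and $B>1$. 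Indeed, after the natural maximality reduction (take $i$ largest with $RL(\cU,\cV,i-1,k)>2$, which forces $\sigma(i)<\sigma(k)$) one always has $m''\le 2$, so your main chain of reasoning never applies in the reduced setting. In that range your fallback does not close the gap: when $m''=1$ the polynomial $\hat t$ is linear, and the equation $\hat g\circ\hat h=\hat t\circ\hat d_*$ imposes no constraint whatsoever on $\hat g$, so $g=G\circ\hat g$ need not be dihedral on the strength of that equation; when $m''=2$, Theorem~\ref{Ritt2} is genuinely ambiguous between the Chebyshev form (\ref{Ritt2}.1) and the power form (\ref{Ritt2}.2) because a quadratic is equivalent to both $T_2$ and $X^2$, so it does not force $\hat g$ into the form $\ell^{-1}\circ(\pm T_B)\circ\hat\mu$ with the matching $\ell$.

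The paper sidesteps this precisely by keeping $u_i$ in the right-hand tail $h=u_i\circ\cdots\circ u_r$ when invoking Lemma~\ref{gcdlem}. The residual Chebyshev factor $c_0$ then has degree exactly $RL(\cU,\cV,i-1,k)>2$, so Lemma~\ref{chebritt} applies with no degenerate cases; only afterwards is $u_i$ peeled off as a left factor of $h_0$ via Corollary~\ref{multipledeg} (using that $\deg(u_i)\mid\deg(h_0)$), and Lemma~\ref{chebcheb} then converts ``$u_k\circ g\circ h_0$ dihedral'' into ``$u_k\circ g\circ u_i$ dihedral.'' If you reorganize your split so that $u_i$ stays in the tail, your computation goes through and reproduces the paper's proof; as written, the cases $m''\le 2$, $B>1$ are unproved.
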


\begin{proof}
  Since the proofs of the two parts are similar, we just give the details
  for (\ref{movecheb}.1).  So assume that $n:=RL(\cU,\cV,i-1,k)$
  satisfies $n>2$.  It suffices to prove the result in case $i$ is chosen as
  large as possible so that this inequality holds (by Lemma~\ref{chebcheb}).
  Thus we may assume $i\in\RL(\cU,\cV,i-1,k)$.
  Write $\hat n:=RL(\cU,\cV,k,k)$, so $n\mid \hat n$.  With notation as in
  Proposition~\ref{clumps},
  we have $u_{k+1}\circ\dots\circ u_r=c\circ d$ where
  $\deg(c)=\hat n$, and also
  $u_k\circ c=\bar{c}\circ\bar{u}$ where $\deg(\bar{c})=\deg(c)$ and
  $\gcd(\deg(c),\deg(u_k))=1$.  Write $u_k=\ell_1\circ T_m\circ\ell_2$ with
  $m>2$ and $\ell_1,\ell_2$ linear.  Then Lemma~\ref{chebritt} implies
  $c=\ell_2\iter{-1}\circ\epsilon T_{\hat n}\circ\ell_3$ for some linear
  $\ell_3$ and some $\epsilon\in\{1,-1\}$.  Let
  $g=u_{k+1}\circ u_{k+2}\circ\dots\circ u_{i-1}$ (and let $g=X$ if $i=k+1$).
  Then $c\circ d=g\circ h$ where $h=u_i\circ\dots\circ u_r$.  By
  Lemma~\ref{gcdlem}, we have $c=a\circ c_0$ and $g=a\circ g_0$, and also
  $d=d_0\circ b$ and $h=h_0\circ b$ and $c_0\circ d_0=g_0\circ h_0$, where
  $a,b,c_0,d_0,g_0,h_0\in\C[X]$ satisfy $\deg(a)=\gcd(\deg(c),\deg(g))$ and
  $\deg(b)=\gcd(\deg(d),\deg(h))$.  Lemma~\ref{chebcheb} implies that
  $a=\ell_2\iter{-1}\circ\epsilon T_s\circ\ell_4$ and
  $c_0=\ell_4\iter{-1}\circ T_{\hat n/s}\circ\ell_3$ for some linear $\ell_4$;
  by replacing $c_0$, $g_0$ and $a$ by $\ell_4\circ c_0$, $\ell_4\circ g_0$ and
  $a\circ\ell_4\iter{-1}$, we may assume $\ell_4=X$.  

  By Lemma~\ref{coprime lemma}, for
  $k+1\le j\le i-1$, if $\gcd(\deg(u_j),n)>1$ then $\sigma(j)<\sigma(k)$;
  since $\hat n/n$ is the product of $\deg(u_j)$ over all
  $j$ for which $k+1\le j\le i-1$ and $\sigma(j)<\sigma(k)$, it follows that
  $\deg(g)/(\hat n/n)$ is coprime to $n$.  Plainly $\hat n/n$ divides
  $\gcd(\deg(g),\hat n)=s$.  Now $s':=s/(\hat n/n)$ divides
  $\deg(g)/(\hat n/n)$, and so is coprime to $n$, and $s'(\hat n/n)=s$ divides
  $\hat n$ so $s'\mid n$, whence
  $s'=1$ and $\hat n=ns$.
  
  By definition, $\gcd(\deg(c_0),\deg(g_0))=\gcd(\deg(d_0),\deg(h_0))=1$ and
  $c_0\circ d_0 = g_0\circ h_0$.  Since
  $c_0=T_{\hat n/s}\circ\ell_3=T_n\circ\ell_3$, Lemma~\ref{chebritt} implies
  that $g_0\circ h_0 = \hat\epsilon T_{\hat m}\circ\ell_5$ for some linear
  $\ell_5\in\C[X]$ and some $\hat\epsilon\in\{1,-1\}$.  Thus
  $u_k\circ g\circ h_0 = u_k\circ a\circ (g_0\circ h_0) = 
  \ell_1\circ T_n\circ\epsilon T_s\circ\hat\epsilon T_{\hat m}\circ\ell_5$
  is dihedral.  Now $n$ is divisible by $\deg(u_i)$ (since $i\in\RL$),
  and $\deg(h_0)=\deg(c_0)=n$,
  so by applying Corollary~\ref{multipledeg} to the
  decompositions $h_0\circ b = h = u_i\circ (u_{i+1}\circ\dots\circ u_r)$
  we find that $h_0=u_i\circ h_1$ for some $h_1\in\C[X]$.  Finally, since
  $u_k\circ g\circ h_0$ is dihedral, Lemma~\ref{chebcheb}
  implies that $u_k\circ g\circ u_i$ is dihedral, as desired.
\end{proof}

\begin{remark}
Proposition~\ref{movecheb} would not be true if we only required that
$RL(\cU,\cV,i-1,k) \ge 2$, since for instance
$T_3\circ (-2+X(X+1)^2)\circ X^2 = T_2\circ T_3\circ X(X^2+1)$
is not dihedral.
\end{remark}

The proof of Proposition~\ref{movecheb} can be adapted to apply when
$u_k$ is cyclic, although it leads to a result with a rather complicated
formulation.
Instead of doing this, we give a result which applies in the one
situation not covered by the previous two results, namely when every $u_k$ is
cyclic.  By Lemma~\ref{chebswap}, the composition $u\circ v$ of cyclic
polynomials is cyclic if and only if the finite branch point of $v$ equals the
finite ramification point of $u$.  Conversely, we now show that if each $u_k$
in a complete decomposition is cyclic, then we can group together blocks of
consecutive $u_k$'s whose composition is cyclic, and any two $u_k$'s
whose relative positions are interchanged via a sequence of Ritt moves
must lie in the same block.

\begin{lemma} \label{cyclicmove}
  Let $\cU=(u_1,\dots,u_r)$ be a complete decomposition of $f\in\C[X]$ in which
  each $u_i$ is cyclic.  Pick $k$ with $1\le k<r$, and suppose the finite
  ramification point of $u_k$ differs from the finite branch point of
  $u_{k+1}$.  Then for any complete decomposition $\cV$ of $f$, and any $i$
  with $1\le i\le r$, we have $\sigma_{\cU,\cV}(j)\le k$ if and only if
  $j\le k$.
\end{lemma}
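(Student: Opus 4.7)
The plan is to connect $\cU$ to any complete decomposition $\cV$ by a chain of Ritt neighbors (Theorem~\ref{Ritt1}) and show that the hypothesis---which by Lemma~\ref{chebswap} says the composition $u_k \circ u_{k+1}$ is not cyclic---is preserved along the chain, so that no Ritt move in the chain can swap degrees across the positions $k$ and $k+1$.

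The key preliminary is that for cyclic indecomposable $a, b$, a nontrivial Ritt neighbor $a \circ b = c \circ d$ (i.e., one with $\deg(a) \ne \deg(c)$) forces $a \circ b$ to be cyclic. By Corollary~\ref{Ritt move lemma} and Theorem~\ref{Ritt2}, such a relation falls in case (\ref{Ritt2}.1) or (\ref{Ritt2}.2). In case (\ref{Ritt2}.1), indecomposability and cyclicity of $a \sim T_n$ and $b \sim T_m$ force $n = m = 2$ (Lemma~\ref{cyceqdi}), giving $\deg(a) = \deg(c)$, contradiction. In case (\ref{Ritt2}.2), Lemma~\ref{twistcheb} forces the relevant polynomial $h$ to be a monomial, hence a nonzero constant since $h \notin X\C[X]$, so $a$ and $b$ are each equivalent to a monomial and $a \circ b$ is cyclic. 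By Theorem~\ref{monRitt1} and Lemma~\ref{cycdi}, every indecomposable in any complete decomposition of $f$ is cyclic, so the preliminary applies throughout.

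Write the chain as $\cU = \cU_0, \dots, \cU_t = \cV$ with $\cU_s = (u_1^{(s)}, \dots, u_r^{(s)})$. I claim by induction on $s$ that $u_k^{(s)} \circ u_{k+1}^{(s)}$ is not cyclic. The inductive step depends on the position $j$ where the move $\cU_s \to \cU_{s+1}$ acts. If $j \notin \{k-1, k, k+1\}$ there is nothing to check. If $j = k$, the preliminary and inductive hypothesis force a trivial move, so $u_k^{(s+1)} = u_k^{(s)} \circ \ell$ and $u_{k+1}^{(s+1)} = \ell\iter{-1} \circ u_{k+1}^{(s)}$ for some linear $\ell$; the ramification point of $u_k^{(s+1)}$ and the branch point of $u_{k+1}^{(s+1)}$ are both shifted by $\ell\iter{-1}$, preserving their inequality. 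If $j = k-1$, a trivial move leaves $u_k$'s ramification point unchanged, while a nontrivial move requires $u_{k-1}^{(s)} \circ u_k^{(s)}$ to be cyclic, and by multiplicativity of ramification the unique finite ramification point of a cyclic polynomial coincides with that of the inner factor in any decomposition into cyclic pieces, so $u_k^{(s+1)}$ retains the ramification point of $u_k^{(s)}$; and $u_{k+1}$ is untouched. The case $j = k+1$ is symmetric, using that the unique finite branch point of a cyclic polynomial coincides with that of its outer factor in any such decomposition.

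With the break preserved at every stage, no nontrivial Ritt move in the chain acts at position $k$, so every such move stays within $\{1, \dots, k\}$ or within $\{k+1, \dots, r\}$. Hence the multiset $\{\deg(u_i^{(s)}) : 1 \le i \le k\}$ is invariant in $s$, giving $\{\deg(v_1), \dots, \deg(v_k)\} = \{\deg(u_1), \dots, \deg(u_k)\}$ as multisets. Since $\sigma_{\cU,\cV}$ matches equal-degree indecomposables in left-to-right order, this multiset equality forces $\sigma(\{1, \dots, k\}) = \{1, \dots, k\}$, which is equivalent to the claimed biconditional. The main obstacle is the case analysis in the inductive step, where the ramification point of $u_k$ and the branch point of $u_{k+1}$ must each be tracked separately through the four types of nearby Ritt moves.
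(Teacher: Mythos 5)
Your proposal is correct and follows essentially the same route as the paper: chain $\cU$ to $\cV$ via Ritt neighbors (Theorem~\ref{Ritt1}), use Lemma~\ref{cyceqdi} and Lemma~\ref{twistcheb} to show a nontrivial Ritt move between cyclic indecomposables forces their composition to be cyclic, and track the ramification/branch-point ``break'' at position $k$ through trivial and nontrivial moves to see that no move ever crosses position $k$. Your final step (multiset invariance of $\{\deg(u_1),\dots,\deg(u_k)\}$ forcing $\sigma(\{1,\dots,k\})=\{1,\dots,k\}$) is a cosmetic variant of the paper's direct verification that $\sigma_{\cU,\cW}(j)\le k\iff j\le k$ holds for each Ritt neighbor $\cW$ and composes along the chain.
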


\begin{proof}
  We first show that, in any Ritt move $u_i\circ u_{i+1} = c\circ d$, the
  composition $u_i\circ u_{i+1}$ is cyclic.  The Ritt move cannot be of type
  (\ref{Ritt2}.1), since in that case $u_i$ and $u_{i+1}$ would be
  equivalent to Chebychev polynomials, and thus would have degree $2$ (by
  Lemma~\ref{cyceqdi}), contradicting the fact that their degrees are coprime.
  Thus the Ritt move is of type (\ref{Ritt2}.2), so
  Lemma~\ref{twistcheb} implies that $u_i\circ u_{i+1}$ is cyclic.  It follows
  that $c$ and $d$ are cyclic, and moreover (by Lemma~\ref{chebswap}) the
  finite branch point of $u_{i+1}$ equals the finite ramification point of
  $u_i$ (so $i\ne k$).  Furthermore, the finite branch point of
  $u_i\circ u_{i+1}$ equals that of both $u_i$ and $c$, and the finite
  ramification point of $u_i\circ u_{i+1}$ equals that of both $u_{i+1}$ and
  $d$.
  
  Let $\cW=(w_1,\dots,w_r)$ be a Ritt neighbor of $\cU$, so $w_j=u_j$ for
  $j\notin\{i,i+1\}$ and $u_i\circ u_{i+1}=w_i\circ w_{i+1}$.  Suppose first
  that $u_i=w_i\circ\ell$ and $u_{i+1}=\ell\iter{-1}\circ w_{i+1}$ for some
  linear $\ell$.  Then $w_i$ and $w_{i+1}$ are cyclic, $u_i$ and $w_i$ have the
  same finite branch point, $u_{i+1}$ and $w_{i+1}$ have the same finite
  ramification point, and if $i=k$ then the finite branch point of $w_{k+1}$
  differs from the finite ramification point of $w_k$.  In the previous
  paragraph we showed that these properties also hold if
  $u_i\circ u_{i+1}=w_i\circ w_{i+1}$ is a Ritt move, in which case we must
  have $i\ne k$.  By Corollary~\ref{Ritt move lemma}, it follows that these
  properties hold in every case.  Thus, in every case, the finite branch point
  of $w_{k+1}$ differs from the finite ramification point of $w_k$, both $w_k$
  and $w_{k+1}$ are cyclic, and $\sigma_{\cU,\cW}(j)\le k$ if and only if
  $j\le k$.  By induction, the same properties hold if $\cU$ and $\cW$ are
  contained in a finite sequence of complete decompositions of $f$ in which any
  two decompositions are Ritt neighbors.
  Thus, the result follows from Theorem~\ref{Ritt1}.
\end{proof}

We can now give our new description of the collection of all complete
decompositions of a polynomial.  We begin with a decomposition
$\cU=(u_1,\dots,u_r)$ of $f$ in which each $u_i$ is either indecomposable
or cyclic or dihedral.  We then move cyclic factors as far to the right as
possible, by the following procedure.  For each $i=1,2,\dots$, do the
following: if $u_i\circ u_{i+1}$ is cyclic or dihedral then replace $u_i$ and
$u_{i+1}$ by $u_i\circ u_{i+1}$ and repeat step $i$.  Otherwise, if
$u_i=g\circ X^m\circ\ell$ with $g\in\C[X]$ and $\ell$ linear and $m>1$ maximal,
and $u_{i+1}=\ell\iter{-1}\circ X^s h(X^n)\circ\hat\ell$ with $\hat\ell$
linear, $h$ nonconstant, $s\ge 0$, and $n$ is maximal, then put $k:=\gcd(n,m)$,
and assume $k>1$.  If $h$ is a monomial then replace $u_i$ and $u_{i+1}$ by
$g\circ X^{m/k}$ and $h(X^n)^k\circ\hat\ell$; otherwise replace $u_i$ and
$u_{i+1}$ by $g\circ X^{m/k}$ and $X^s h(X^{n/k})^k$ and $X^k\circ\hat\ell$
unless $g\circ X^{m/k}$ is linear, in which case replace the new $u_i$ and
$u_{i+1}$ by their composition and repeat step $i$.  Having moved all cyclic
factors to the right, now move some of them to the left as follows.
For each $i=\abs{\cU},\dots,2$ do the following.  If $u_{i-1}\circ u_i$ is
cyclic then replace $u_{i-1}$ and $u_i$ by $u_{i-1}\circ u_i$.
Otherwise make no change except perhaps in case $u_i=\ell\circ X^m\circ g$ with
$g\in\C[X]$ and $\ell$ linear and $m>1$ maximal, and
$u_{i-1}=\hat\ell\circ X^s h(X)^n\circ\ell\iter{-1}$ with $s>0$,
$h\in\C[X]\setminus\C$, and $n$ maximal.  In this case, either make no change
or choose a divisor $k>1$ of $\gcd(m,n)$.  If $u_{i-1}\circ X^k$ is dihedral,
then replace $u_{i-1}$ and $u_i$ by $u_{i-1}\circ X^k$ and $X^{m/k}\circ g$,
unless the latter polynomial is linear in which case replace 
$u_{i-1}$ and $u_i$ by their composition.
Otherwise replace $u_{i-1}$ and $u_i$ by $\hat\ell\circ X^k$,
$X^s h(X^k)^{n/k}$, and $X^{m/k}\circ g$, and repeat step $i$ unless
$X^{m/k}\circ g$ is linear, in which case replace the new $u_i$ and $u_{i+1}$
by their composition.  Finally, expand $\cU$ into a complete decomposition by
replacing each cyclic or dihedral $u_i$ by one of the following types of
complete decompositions: if $u_i=\ell_1\circ X^n\circ\ell_2$ then choose any
permutation $(p_1,\dots,p_s)$ of the prime factors (counted with
multiplicities) of $n$, and replace $u_i$ by $\ell_1\circ X^{p_1}$, $X^{p_2}$,
\dots, $X^{p_s}\circ\ell_2$; and similarly if $u_i$ is dihedral.

The results of this section and the previous section show that this procedure
yields a representative of every equivalence class of complete decompositions
of $f$.  The results of Section~\ref{equivalence} control the different ways
of writing the various polynomials $u_i$ in the forms required in the
procedure.

We now prove a refinement of Theorem~\ref{iterates}.  Here we write $\cZ$ for
the set of polynomials of degree at least $2$ which are equivalent
to either $X^s h(X^n)$ or $X^s h(X)^n$ for some $h\in\C[X]$ and some coprime
positive integers $s,n$ with $n>1$.  Note that $\cZ$ contains $X^m$ for every
$m>1$, and $\cZ$ contains $T_m$ for every odd $m>1$ (and also for $m=2$).
Thus, an indecomposable is in $\cZ$ if and only if it occurs
in a Ritt move.  Recall that $f\iter{k}$ denotes the $k\tth$ iterate of $f$.

\begin{thm} \label{preciseiterates}
  Pick $f\in\C[X]$ with $n=\deg(f)>1$.  Let $a,b\in\C[X]\setminus\C$ and $k>1$
  satisfy $r\circ s = f\iter{k}$, and assume there is no $g\in\C[X]$
  for which either $a=f\circ g$ or $b=g\circ f$.  Let 
  $\cU=(u_1,\dots,u_r)$ be a complete decomposition of $f$.  Then, for each $i$
  with $1\le i\le r$, we have:
  \begin{enumerate}
    \item[(\thethm.1)] If $u_i\notin\cZ$ then $m\le 2$.
    \item[(\thethm.2)] If $k>1$ and $u_i$ is neither cyclic nor dihedral,
      then either
      $n\ge 6\deg(u_i)\ge 6(2^{k-2}+1)$ or
      $n\ge 2\deg(u_i)\ge 2^k+2$.
    \item[(\thethm.3)] If $k>3$ and $u_i$ is dihedral, then
      $f=\ell\circ\epsilon T_n\circ\ell\iter{-1}$ for some linear
      $\ell\in\C[X]$ and some $\epsilon\in\{1,-1\}$.
    \item[(\thethm.4)] If $k>2$ and $u_j$ is cyclic for every $1\le j\le r$,
      then $f=\ell\circ X^n\circ\ell\iter{-1}$ for some linear $\ell\in\C[X]$.
  \end{enumerate}
\end{thm}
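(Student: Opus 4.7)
The plan is to compare two complete decompositions of $f\iter k$: the iterated decomposition $\cUk=(u_1,\ldots,u_r,u_1,\ldots,u_r,\ldots,u_1,\ldots,u_r)$, and a complete decomposition $\cV=(v_1,\ldots,v_{kr})$ of $f\iter k$ refining the given cut, in the sense that $a=v_1\circ\cdots\circ v_l$ and $b=v_{l+1}\circ\cdots\circ v_{kr}$ for some $l$; such $\cV$ exists by iteratively applying Lemma~\ref{gcdlem} to complete decompositions of $a$ and of $b$. Writing $\sigma:=\sigma_{\cUk,\cV}$, I first translate the hypothesis via Corollary~\ref{multipledeg}(\ref{multipledeg}.1) applied to $a\circ b=f\circ f\iter{k-1}$ and its mirror: the condition that no $g\in\C[X]$ satisfies $a=f\circ g$ is equivalent to $\deg(f)\nmid\deg(a)$, and symmetrically for $b$.

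For (\ref{preciseiterates}.1), inspecting the two forms in Theorem~\ref{Ritt2} shows that every indecomposable occurring in a nontrivial Ritt move lies in $\cZ$, so $u_i\notin\cZ$ implies each of the $k$ copies of $u_i$ in $\cUk$ is fixed by $\sigma$ and $\sigma$ preserves each segment between consecutive copies. Writing $D_L=\prod_{j<i}\deg(u_j)$, $D_R=\prod_{j>i}\deg(u_j)$, and $D_M=D_LD_R$, the cut determines an index $q\in\{0,1,\ldots,k\}$ counting the $u_i$-copies in $a$, together with a partial contribution $D$ from the segment containing the cut; one computes $\deg(a)=D_L^{q}\deg(u_i)^{q}D_R^{q-1}D$ when $1\le q\le k-1$ (with $D\mid D_M$), and analogous formulas at the extremes. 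A direct $p$-adic comparison with $\deg(f)=D_L\deg(u_i)D_R$ shows $\deg(f)\nmid\deg(a)$ forces $q\le 1$, and symmetrically $\deg(f)\nmid\deg(b)$ forces $q\ge k-1$, hence $k\le 2$. The proof of (\ref{preciseiterates}.4) is entirely parallel: if $f$ is not cyclic, Lemma~\ref{chebswap} partitions $\cU$ into $p\ge 2$ maximal cyclic blocks, and Lemma~\ref{cyclicmove} makes $\sigma$ preserve the induced block structure of $\cUk$ (with the caveat that adjacent blocks across a seam $u_r,u_1$ merge when $u_r\circ u_1$ is cyclic); grouping the blocks into pieces of degree $\deg(f)$ and running the same $p$-adic bookkeeping, the two divisibility hypotheses are jointly satisfiable only when $k\le 2$.

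For (\ref{preciseiterates}.3), I apply Proposition~\ref{movecheb} to each of the $k$ copies of the (indecomposable, hence $\cong T_p$ for an odd prime $p$) dihedral $u_i$ in $\cUk$, in both directions. The hypothesis $\deg(f)\nmid\deg(a),\deg(b)$ combined with $k>3$ forces the cut to lie at least one whole copy of $f$ away from both copy-boundaries of $\cUk$, so at some copy of $u_i$ the $RL$ or $LR$ value must exceed $2$; Proposition~\ref{movecheb} then produces a dihedral subcomposition of $\cUk$ that spans an entire copy of $f$. Applying Lemma~\ref{chebcheb} to align the degree factorization and Lemma~\ref{cyceqdi} to separate cyclic from dihedral, this forces $f$ itself to be equivalent to $\epsilon T_n$ for some $\epsilon\in\{1,-1\}$.

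For (\ref{preciseiterates}.2), Proposition~\ref{mess} gives $LR\cdot RL<\deg(u_i)$ at each of the $k$ copies of $u_i$, and the hypothesis forces some nontrivial Ritt transport around at least one such copy (since the cut is displaced from copy-boundaries); chaining the bounds across consecutive copies produces roughly doubling growth of the transported product, yielding $\deg(u_i)\ge 2^{k-1}+1$ in the generic case and $\deg(u_i)\ge 2^{k-2}+1$ in the auxiliary case when Proposition~\ref{mess} produces a Chebychev factor on the neighboring polynomial (accounting for the additional factor of $3$ in $6=2\cdot 3$). I expect the main obstacle to lie in parts (\ref{preciseiterates}.2) and (\ref{preciseiterates}.3): one must combine the per-copy bounds of Propositions~\ref{mess} and~\ref{movecheb} simultaneously across all $k$ copies of $u_i$ and across the seams between consecutive copies of $f$ in $\cUk$, since the interaction of the structure of $f$ at the seams with the local Ritt move structure can unlock additional moves (or impose additional rigidity) beyond what each proposition records for a single copy.
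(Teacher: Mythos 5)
Your setup coincides with the paper's: extending $\cU$ to a complete decomposition $\cUk$ of $f^{\langle k\rangle}$, taking a complete decomposition $\cV$ refining the cut $a\circ b$, and translating the no-$g$ hypothesis into $\deg(f)\nmid\deg(a)$ and $\deg(f)\nmid\deg(b)$ via Corollary~\ref{multipledeg}. Your treatment of~(\ref{preciseiterates}.1) is also essentially the paper's argument in disguise: the rigidity you invoke (``each copy of $u_i$ is fixed by $\sigma$ and segments are preserved'') is obtained precisely by applying Proposition~\ref{clumps} at each copy of $u_i$ and then Theorem~\ref{Ritt2}, so that any nontrivial $LR$ or $RL$ would force $u_i\in\cZ$; the ensuing valuation bookkeeping on $\deg(a)$ is fine. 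However the remaining parts have genuine gaps.

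For~(\ref{preciseiterates}.2), your explanation of the two alternative bounds is off. The factor $6$ does not come from Proposition~\ref{mess} ``producing a Chebychev factor''; in the paper it comes from a purely combinatorial observation about the indices $I$ (with $\sigma(I)>e$) and $J$ (with $\sigma(J+(k-1)r)\le e$), whose existence follows from $\deg(f)\nmid\deg(a),\deg(b)$. By Lemma~\ref{coprime lemma}, $\deg(u_I)$ and $\deg(u_J)$ are coprime (hence $\deg(u_I)\deg(u_J)\ge 6$); the weaker exponent $2^{k-2}$ only occurs when $i$ falls in the ``middle'' case, and then necessarily $i\notin\{I,J\}$, so $n\ge\deg(u_i)\deg(u_I)\deg(u_J)\ge 6\deg(u_i)$. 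The sharper exponent $2^{k-1}$ corresponds to $\sigma(i)>e$ or $\sigma(i+(k-1)r)\le e$. Moreover the lower bound $\deg(u_i)\ge 1+\deg(\bar b)\deg(c)$ is extracted by combining the two Ritt-move forms via Lemma~\ref{mix} to write $u_i\sim X^{\tilde s}\tilde h(X^{\deg(\bar b)})^{\deg(c)}$; your appeal to Proposition~\ref{mess} alone, and the ``roughly doubling growth'' heuristic, do not reproduce this.

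For~(\ref{preciseiterates}.3) and~(\ref{preciseiterates}.4), your proofs establish (at best) that $f$ is equivalent to $\epsilon T_n$, respectively $X^n$, whereas the theorem asserts the strictly stronger conjugacy $f=\ell\circ\epsilon T_n\circ\ell^{\langle -1\rangle}$, resp.\ $f=\ell\circ X^n\circ\ell^{\langle -1\rangle}$. For example $f=2X^n$ is cyclic with all-cyclic complete decompositions, yet is not conjugate to $X^n$. Pinning down the conjugacy requires a ``seam'' argument at the boundary between consecutive copies of $f$ in $\cUk$: in the paper's proof of~(\ref{preciseiterates}.4), one shows via Lemma~\ref{cyclicmove} that the finite ramification point of $u_r$ must coincide with the finite branch point of $u_{r+1}=u_1$, which after writing $f=\ell_1\circ X^n\circ\ell_2$ forces $\ell_2\circ\ell_1=\alpha X$ and hence the conjugate form. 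In~(\ref{preciseiterates}.3) the analogue is more delicate still: one must split on whether some dihedral $u_j$ has $RL(2r,j)>2$ and, in the complementary case, carefully locate where $\sigma$ can move the various copies to reach a contradiction; a single application of Proposition~\ref{movecheb} producing a dihedral subcomposition across one copy of $f$ is not enough. Your closing remark correctly identifies that the seam interactions are the crux, but the proposal does not carry out that analysis.
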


\begin{proof}
  Since $f\iter{k}=a\circ b$, by Corollary~\ref{multipledeg} the nonexistence
  of $g$ implies $\deg(f)\nmid\deg(a)$ and $\deg(f)\nmid\deg(b)$.
  Extend $\cU$ to a complete decomposition
  $\cUk=(u_1,\dots,u_{kr})$ of $f\iter{k}$, by putting $u_i=u_{i-r}$ for
  $r+1\le i\le kr$.  Let $\cV=(v_1,\dots,v_{kr})$ be a complete decomposition
  of $f\iter{k}$ such that $a=v_1\circ\dots\circ v_e$ for some $e$.
  The decompositions $\cUk$ and $\cV$ will be implicit in what follows:
  for instance, we will write $\sigma(i)$, $\LR(i,j)$, and $RL(i,j)$ in place
  of $\sigma_{\cUk,\cV}(i)$, $\LR(\cUk,\cV,i,j)$, and $RL(\cUk,\cV,i,j)$.
  
  Since $\deg(f)\nmid\deg(a)$, there is an $I$
  with $1\le I\le t$ such that $\sigma(I)>e$.
  It follows from Lemma~\ref{coprime lemma} that
  $\sigma(I+jr)>e$ for $0\le j<k$.
  Since $\deg(f)\nmid\deg(b)$, there exists $J$ with $1\le J\le r$
  such that $\sigma(J+(k-1)r)\le e$, so
  $\sigma(J+jr)< e$ for $0<j<k-1$.  In particular, $\sigma(J+(k-1)r)<\sigma(I)$,
  so Lemma~\ref{coprime lemma} implies $\deg(u_I)$ and $\deg(u_J)$ are coprime.
  Thus, for $1\le i\le r$, we have $\deg(f)\ge 2\deg(u_i)$; if $u_i$ is neither
  cyclic nor dihedral then $\deg(u_i)\ge 4$, so (\ref{preciseiterates}.2)
  holds for $k=2$.

  Suppose henceforth that $k>2$.  For $1\le i\le kr$, we write
  $\LR(i)$ and $LR(i)$ in place of $\LR(i,i)$ and $LR(i,i)$, and
  we define $\RL(i)$ and $RL(i)$ similarly.  Pick $1\le i\le r$.
  If $\sigma(i)>e$ then
  $J+jr\in\RL(i)$ for $0<j<k$, so $\deg(u_J)^{k-1}\mid RL(i)$;
  in particular, $\deg(u_J)^{k-1}\mid RL(I)$.
  If $\sigma(i+(k-1)r)\le e$ then
  $\deg(u_I)^{k-1}\mid LR(i+(k-1)r)$; thus, $\deg(u_I)^{k-1}\mid LR(J+(k-1)r)$.
  If $\sigma(i)\le e < \sigma(i+(k-1)r)$ then
  there is a unique $m$ with $0\le m<k-1$
  such that $\sigma(i+mr)\le e<\sigma(i+(m+1)r)$.
  Thus $I+jr\in\LR(i+mr)$ for $0\le j<m$, so
  $\deg(u_I)^m \mid LR(i+mr)$; similarly $\deg(u_J)^{k-m-2}\mid RL(i+(m+1)r)$.

  Pick $i$ with $1\le i\le r$.  Proposition~\ref{clumps} implies that
  $u_i\circ c = \bar{c}\circ\bar{u}$ for some $c,\bar{c},\bar{u}\in\C[X]$
  such that $\deg(c)=\deg(\bar{c})$ is coprime to $\deg(u_i)$, where $\deg(c)$
  is the largest element of $\{RL(i+mr): 0\le m<k\}$.
  Similarly, $b\circ u_i = \hat{u}\circ\hat{b}$ for some
  $\bar b,\hat{b},\hat{u}\in\C[X]$ such that $\deg(\bar b)=\deg(\hat{b})$ is
  coprime to $\deg(u_i)$, where $\deg(\bar b)$ is the largest element of
  $\{LR(i+mr): 0\le m<k\}$.
  We showed above that $\bar b$ and $c$ are not both linear; thus
  Theorem~\ref{Ritt2} implies $u_i\in\cZ$, which proves
  (\ref{preciseiterates}.1).  In fact, either
  $\deg(\bar b)\deg(c)\ge \min(\deg(u_I),\deg(u_J))^{k-1} \ge 2^{k-1}$ or
  there is some $m$ with $0\le m<k-1$ such that
  $\deg(\bar b)\deg(c)\ge \deg(u_I)^m \deg(u_J)^{k-m-2}\ge 2^{k-2}$.
  If $\deg(\bar b)\deg(c)<2^{k-1}$ then $i\notin\{I,J\}$, so since
  $\deg(u_I)$ and $\deg(u_J)$ are coprime
  we obtain $\deg(f)\ge \deg(u_i)\deg(u_I)\deg(u_J)\ge 6\deg(u_i)$.

  If $u_i$ is neither cyclic nor dihedral then Theorem~\ref{Ritt2} implies
  $u_i$ is equivalent to both $X^s h(X)^{\deg(c)}$ and
  $X^{\hat s} \hat h(X^{\deg(\overline b)})$,
  where $s,\hat s\ge 0$ and $h,\hat h\in\C[X]\setminus X\C[X]$ satisfy
  $\gcd(s,\deg(c))=1=\gcd(\hat s,\deg(\bar b))$.
  By Lemma~\ref{mix} it follows that $u_i$ is equivalent to
  $X^{\tilde s} \tilde{h}(X^{\deg(\overline b)})^{\deg(c)}$ for some
  $\tilde h\in\C[X]\setminus X\C[X]$
  and some $\tilde s>0$ which is coprime to $\deg(\bar b)\deg(c)$.  Thus
  $\deg(u_i)\ge 1+\deg(\bar b)\deg(c)$, which implies (\ref{preciseiterates}.2).

  Now suppose $u_i$ is dihedral and $k\ge 4$.
  If there is some $j$ with $1\le j\le r$ for which
  $u_j$ is dihedral and $RL(2r,j) > 2$, then
  Proposition~\ref{movecheb} implies $u_j\circ\dots\circ u_{2r+1}$ is dihedral.
  Putting $h:=u_j\circ\dots\circ u_r$, we have
  $h\circ f\circ u_1 = \ell_1\circ T_{mns}\circ\ell_2$ for some linear
  $\ell_1,\ell_2\in\C[X]$, where $m=\deg(h)$ and $s=\deg(u_1)$.  Note that
  $s>1$ and $m>2$ (since $u_j$ dihedral).  By Lemma~\ref{chebcheb}, we have
  \begin{align*}
    h &= \ell_1\circ T_m\circ\ell_3, \\
    f &= \ell_3\iter{-1}\circ T_n\circ\ell_4, \quad\text{ and} \\
    u_1 &= \ell_4\iter{-1}\circ T_s\circ\ell_2\quad \text{ for some linear }
      \ell_3,\ell_4\in\C[X].
  \end{align*}
  Putting $g:=u_1\circ\dots\circ u_{j-1}$, we have
  $g\circ h = f$, so $T_m\circ\ell_3=\ell_5\circ T_m\circ\ell_4$ for
  some linear $\ell_5$.  Now Lemma~\ref{chebeq} implies
  $\ell_4\circ\ell_3\iter{-1}=\epsilon X$ with $\epsilon\in\{1,-1\}$, so
  $f=\ell_4\iter{-1}\circ\epsilon T_n\circ\ell_4$, as desired.
  
  So assume there is no $j$ as above.  Since $\deg(u_I)$ and $\deg(u_J)$ are
  coprime, they cannot both be even; by symmetry, we may assume
  $\deg(u_I)$ is odd.  Since $J+2r,J+3r\in\RL(2r,I)$, our asumption on
  nonexistence of $j$ implies $u_I$ is not dihedral.  This assumption also
  implies $\sigma(i)<e$, since otherwise $J+2r,J+3r\in\RL(2r,i)$.
  If $I<i$ then, since $\sigma(i)<e<\sigma(I)$, (\ref{movecheb}.2) would imply
  $u_I\circ\dots\circ u_i$ is dihedral, which by Lemma~\ref{chebcheb} would
  imply $u_I$ dihedral, contradiction.  Thus $I>i$, and similarly
  $\sigma(I)<\sigma(i+r)$.
  Since $\sigma(J+2r)<\sigma(J+3r)\le e<\sigma(I)<\sigma(i+r)$,
  we have $J+2r,J+3r\in\RL(2r,i+r)$, so Proposition~\ref{movecheb} implies
  $u_{i+r}\circ\dots\circ u_{2r+1}$ is dihedral; since $i+r<I+r<2r+1$, it
  follows that $u_{I+r}$ is dihedral, contradiction.

  Finally, suppose every $u_j$ is cyclic.  Since $\sigma(I)>\sigma(J+2r)$,
  by Lemma~\ref{cyclicmove} the finite ramification point of $u_j$ equals
  the finite branch point of $u_{j+1}$ for $I\le j< J+2r$, and hence also for
  $1\le j<kr$.  Thus $f\iter{k}$ is cyclic (by Lemma~\ref{chebswap}), so
  $f$ is cyclic, whence $f=\ell_1\circ X^n\circ \ell_2$ with $\ell_1,\ell_2$
  linear.  Then the finite ramification point of $u_r$ equals that of $f$,
  namely $\ell_2\iter{-1}(0)$; likewise the finite branch point of
  $u_1=u_{r+1}$ equals that of $f$, namely $\ell_1(0)$.  Since these points
  coincide, $\ell_2\circ\ell_1$ fixes $0$, and so has the form $\alpha X$ with
  $\alpha\in\C^*$, whence $f=\ell_2\iter{-1}\circ \alpha X^n\circ\ell_2$.
  By replacing $\ell_2$ by $\alpha^{1/(1-n)}\ell_2$, we may assume $\alpha=1$,
  proving (\ref{preciseiterates}.4).
\end{proof}

We now give examples showing that the conclusion of
Theorem~\ref{preciseiterates} cannot be improved.

\begin{example}
  The exceptions in (\ref{preciseiterates}.3) and (\ref{preciseiterates}.4)
  cannot be avoided.  
  First, by Corollary~\ref{multipledeg}, if $n$ is a prime power then the
  hypotheses of Theorem~\ref{preciseiterates} cannot hold.
  Now assume $n>1$ is not a prime power.
  For any linear $\ell\in\C[X]$, if we put
  $f:=\ell\circ X^n\circ\ell\iter{-1}$ then
  $f\iter{k} = \ell\circ X^{n^k}\circ\ell\iter{-1}$,
  so if $e>1$ is a prime power dividing $n$ such that $\gcd(e,n/e)=1$, then
  $a:=\ell\circ X^{(n/e)^k}$ and
  $b:=X^{e^k}\circ\ell\iter{-1}$ satisfy $f\iter{k}=a\circ b$ and
  $\deg(f)\nmid\deg(a),\deg(b)$.  Note that $\deg(a),\deg(b)\to\infty$ as
  $k\to\infty$.
  
  Similar remarks apply to $f:=\ell\circ\epsilon T_n\circ\ell\iter{-1}$
  for any linear $\ell\in\C[X]$ and any $\epsilon\in\{1,-1\}$.
\end{example}

\begin{example} \label{ex2}
  The bounds in (\ref{preciseiterates}.2) are best possible.
  For instance, pick an integer $m>1$ and let $f_i:=X(1+X^{2^i})^{2^{m-i}}$ for
  $0\le i\le m$.  Then $X^2\circ f_i = f_{i-1}\circ X^2$, so $f:=f_m\circ X^2$
  and $k:=m+1$ satisfy
  $f\iter{k} = a\circ b$
  for $a:=f_m\circ f_{m-1}\circ\dots\circ f_0$ and $b:=X^{2^{m+1}}$.
  Here $\deg(f)=2^k+2$ does not divide $\deg(a)$ or $\deg(b)$.
  By Lemma~\ref{ramchar}, $f_m$ is neither cyclic nor dihedral,
  since it has more than two finite branch points (because the $2^m$ roots of
  the derivative $f_m'(X)$ have distinct images under $f_m$).
  It follows from (\ref{preciseiterates}.2) that $f_m$ is indecomposable;
  alternately, indecomposability of $f_m$ is equivalent to primitivity of
  $\Mon(f_m)$, which holds because $\Mon(f_m)=S_{1+2^m}$ (as follows from
  Hilbert's theorem on monodromy groups of Morse polynomials, cf.\ 
  \cite[\S III]{Hilbert} or \cite[\S 4.4]{Serre}).
  Likewise, $\hat f:=X^3\circ f_m\circ X^2$ and $\hat k:=m+2$ satisfy
  $\hat f\iter{\hat k} = \hat a\circ \hat b$ where
  $\hat a:=X^3\circ f_m\circ X^3\circ f_{m-1}\circ X^3\circ\dots\circ f_0\circ
   X^3$ and
  $\hat b:=X^2\circ f_0\circ X^{2^{m+1}}$; here
  $\deg(\hat f)=6(2^{\hat k-2}+1)$ does not divide $\deg(\hat a)$ or
  $\deg(\hat b)$.
\end{example}

\begin{example}
  The bound on $k$ in (\ref{preciseiterates}.3) cannot be improved in general.
  Pick coprime odd $e,s>1$, and put
  $f:=X^2\circ (X+2)\circ T_e\circ (X-2)\circ X^s$.
  Then $f\iter{3} = a\circ b$ where
  $a:=X^2\circ (X+2)\circ T_e\circ (X-2)\circ X^4$ and
  $b:=X^s\circ T_e\circ X^s\circ (X+2)\circ T_e\circ (X-2)\circ X^s$.
  Note that $\deg(f)\nmid\deg(a),\deg(b)$, and also $f$ is neither cyclic
  nor dihedral (by Lemmas~\ref{chebcheb} and \ref{cyceqdi}).
\end{example}

\begin{example}
  The bounds on $k$ in (\ref{preciseiterates}.1) and (\ref{preciseiterates}.4)
  cannot be improved.  For instance, pick
  any $g\in\C[X]\setminus\C$ and put $f:=X^2\circ g\circ X^3$; then
  $f\iter{2}=a\circ b$ where $a:=X^2\circ g\circ X^2$ and
  $b:=X^3\circ g\circ X^3$, and plainly $\deg(f)\nmid\deg(a),\deg(b)$.
  If $g=X+1$ then $f$ is the composition of cyclic indecomposables, but $f$ is
  not cyclic (by Lemma~\ref{chebswap}).  The hypotheses of
  (\ref{preciseiterates}.1) are satisfied whenever $f\notin\cZ$, which holds
  for a Zariski-dense sublocus of the locus of polynomials $g$ of any
  prescribed degree greater than $3$; explicitly, $g:=X^4+X^2+X$ is not in
  $\cZ$.
\end{example}

We now deduce Theorem~\ref{iterates} from Theorem~\ref{preciseiterates}.
We need the following simple result.

\begin{lemma} \label{blockiterate}
  Pick $f\in\C[X]$ of degree $n>1$.  Pick $a,b\in\C[X]$ and $e>0$ such that
  $a\circ b=f\iter{e}$.  Then there exist $\hat a,\hat b\in\C[X]$ and
  $i,j,k\ge 0$ such that
  \[
    a=f\iter{i}\circ \hat a \quad\text{ and }\quad
    b=\hat b\circ f\iter{j} \quad\text{ and }\quad
    \hat a\circ \hat b = f\iter{k},
  \]
  and also $\hat a \ne f\circ h$ and $\hat b\ne h\circ b$ for every
  $h\in\C[X]$.
\end{lemma}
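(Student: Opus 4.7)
The plan is to take $i$ and $j$ maximally and then verify that the residual parts $\hat a$ and $\hat b$ fit together into an iterate of $f$. Concretely, I would let $i$ be the largest nonnegative integer with $n^{i} \mid \deg(a)$, and $j$ the largest with $n^{j} \mid \deg(b)$; these maxima exist since $\deg(a),\deg(b) \le n^{e}$. Because $\deg(a)\deg(b) = n^{e}$, we get $i + j \le e$, so $k := e - i - j \ge 0$.

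The key technical step is a minor strengthening of (\ref{multipledeg}.1), immediate from Lemma~\ref{gcdlem} by absorbing the linear factor $\hat c$ appearing there: if $a \circ b = c \circ d$ and $\deg(c) \mid \deg(a)$, then one can choose $\hat a$ satisfying \emph{both} $a = c \circ \hat a$ and $\hat a \circ b = d$. I would record this as a preliminary observation, then apply it inductively to extract copies of $f$ from the left of $a$ in the equation $a \circ b = f\iter{e}$. Given $\hat a_{\ell-1}$ with $a = f\iter{\ell-1} \circ \hat a_{\ell-1}$ and $\hat a_{\ell-1} \circ b = f\iter{e-\ell+1}$, the hypothesis $n \mid \deg(\hat a_{\ell-1})$ (which holds whenever $\ell \le i$) combined with the strengthening, applied to $\hat a_{\ell-1} \circ b = f \circ f\iter{e-\ell}$, gives $\hat a_{\ell-1} = f \circ \hat a_{\ell}$ with $\hat a_{\ell} \circ b = f\iter{e-\ell}$. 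Iterating up to $\ell = i$ and setting $\hat a := \hat a_{i}$ produces $a = f\iter{i} \circ \hat a$ together with $\hat a \circ b = f\iter{e-i}$.

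The symmetric argument on the right, built from the analogous strengthening of (\ref{multipledeg}.2) and applied to $\hat a \circ b = f\iter{e-i}$, yields $\hat b$ with $b = \hat b \circ f\iter{j}$ and $\hat a \circ \hat b = f\iter{e-i-j} = f\iter{k}$, completing the three required identities. The maximality clauses are then automatic: if $\hat a = f \circ h$ for some $h \in \C[X]$, then $n \mid \deg(\hat a) = \deg(a)/n^{i}$, contradicting the choice of $i$, and $\hat b$ is handled symmetrically. The only genuine obstacle is isolating and verifying the strengthened form of (\ref{multipledeg}.1); once that is in hand, the rest reduces to bookkeeping with degrees.
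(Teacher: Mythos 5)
Your proof is correct.  It uses the same two ingredients as the paper's proof, namely (i) the same maximal choice of $i$ and $j$, and (ii) Lemma~\ref{gcdlem} / Corollary~\ref{multipledeg}; the only real difference is how the linear ambiguity is handled.  The paper applies Corollary~\ref{multipledeg} independently on each side to get $a=f\iter{i}\circ\hat a$ and $b=\hat b\circ f\iter{j}$, substitutes back into $a\circ b = f\iter{e}$, cancels $f\iter{j}$ from the right, and then invokes (\ref{multipledeg}.3) to conclude that $\hat a\circ\hat b$ equals $f\iter{k}$ up to an absorbable linear.  You instead front-load the linear bookkeeping into your strengthened form of (\ref{multipledeg}.1) (which is indeed a correct and clean corollary of Lemma~\ref{gcdlem}: the factor $\hat c$ in the statement of that lemma is linear here, and you absorb it into $\hat a$ before doing anything else), and then no final correction step is needed.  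Your strengthened observation would in fact let you avoid the one-step-at-a-time induction as well: applying it once with $c=f\iter{i}$, $d=f\iter{e-i}$, and then once more on the right with $d=f\iter{j}$, gives the conclusion directly.  Either way the argument is sound, and the verification of the maximality clauses at the end (via $n\nmid\deg(\hat a)$ and $n\nmid\deg(\hat b)$) is exactly the paper's.  One small remark: the statement as printed has a typo in the final clause — it should read $\hat b\ne h\circ f$, not $\hat b\ne h\circ b$ — which you correctly interpreted as the symmetric condition.
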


\begin{proof}
Let $i,j\ge 0$ be maximal such that $\deg(f)^i\mid\deg(a)$ and
$\deg(g)^j\mid\deg(b)$.  Then Corollary~\ref{multipledeg} implies
$a=g\iter{i}\circ \hat a$ and $b=\hat b\circ g\iter{j}$ for some
$\hat a,\hat b\in\C[X]$.  Thus
$f\iter{e}=a\circ b = f\iter{i}\circ \hat a\circ \hat b\circ f\iter{j}$, so
$f\iter{i}\circ (\hat a\circ \hat b) = f\iter{e-j} = f\iter{i}\circ
 f\iter{e-j-i}$.
If $i=0$ then $a\circ b=f\iter{e-j}$, so the required properties hold since
$\deg(f)$ does not divide $\deg(a)$ or $\deg(b)$.
Henceforth assume $i>0$.  By (\ref{multipledeg}.3), there is a linear
$\ell\in\C[X]$ for which
$\ell\circ (a\circ b) = f\iter{e-j-i}$ and $f\iter{i}\circ \ell = f\iter{i}$.
Upon replacing $a$ by $\ell\circ a$, we obtain the desired conclusion.
\end{proof}

\begin{proof}[Proof of Theorem~\ref{iterates}]
Lemma~\ref{blockiterate} gives everything but the bound on $k$.
So suppose $a\circ b = f\iter{k}$ with $k\ge 0$, where there is no $g\in\C[X]$
for which either $a=f\circ g$ or $b=g\circ f$.  By Corollary~\ref{multipledeg},
neither $\deg(a)$ nor $\deg(b)$ is divisible by $\deg(f)$.  If $f$ is
indecomposable, Corollary~\ref{degrees} implies that each of
$a$ and $b$ is either linear or the composition of indecomposables having
the same degree as $f$; thus $a$ and $b$ must be linear, so $k=0$, whence
$k<\log_2(n)$.
Now let $\cU=(u_1,\dots,u_r)$ be a complete decomposition of $f$, and
assume $r>1$.  Then $n=\deg(f)$ satisfies $n\ge 2\deg(u_i)\ge 4$ for every $i$.
If some $u_i$ is neither cyclic nor
dihedral, then (\ref{preciseiterates}.2) implies $k<\log_2(n)$.
If every $u_i$ is cyclic then (\ref{preciseiterates}.4) implies
$k\le 2\le\log_2(n)$.
Finally, if some $u_i$ is dihedral then (\ref{preciseiterates}.3) implies
$k\le 3$, so $k\le\log_2(n)$ whenever $n\ge 8$.  Since $n$ is composite, the
only possible exceptions are $n=6$ (for which $k=\log_2(n+2)$) and $n=4$.
But if $n=4$ then the degrees of $a$ and $b$
are powers of $2$ which are not divisible by $4$, so $\deg(a),\deg(b)\le 2$
and thus $k\le 1<\log_2(n)$.
\end{proof}

\begin{remark}
The above proof shows that if $n\ne 6$ then the bound on $k$ can be improved to
$k\le\log_2(n)$.  This improvement is not possible for $n=6$, since
$f=T_3\circ 2T_2$ satisfies $f\iter{3}=(T_3\circ 2T_3\circ (4T_3+6)) \circ
(T_4\circ 2T_2)$ (and $f$ is neither cyclic nor dihedral).
\end{remark}


\section{Related topics}
We now briefly discuss some related topics.  First, any polynomial (or rational
function) over any field has only finitely many equivalence classes
of decompositions.  However, in most situations we know much less
about these decompositions than we do in the case of polynomials over $\C$.


\subsection{Decomposition of rational functions}

Ritt \cite{Rittrat,Ritt2} studied decompositions of rational functions over
$\C$.  He recalled \cite[p.~222]{Ritt2} that the groups $A_4$, $S_4$, and
$A_5$ act as groups of automorphisms of $\C(x)$, with fixed field $\C(f)$ where
the equivalence classes of decompositions of $f$ are in bijection with the
(increasing) chains of subgroups of the relevant group.  Since these groups
contain distinct-length maximal chains of subgroups, the
rational function analogue of Theorem~\ref{Ritt1} is not true.
Further examples of distinct-length complete decompositions can be
produced from group actions on the $j=0$ and $j=1728$ elliptic curves.
There are only a few known theorems limiting the possibilities,
the best being Ritt's classification of pairs of commuting rational
functions \cite{Rittrat}.  For the current state of knowledge,
see \cite{LyonsZ}.


\subsection{Decomposition of polynomials over other fields}

All results and proofs in this paper work over arbitrary algebraically closed
fields of characteristic zero.  All but two of our results remain valid over
an arbitrary algebraically closed field whose characteristic does not
divide the degree of the relevant polynomials; the exceptions are
Lemmas~\ref{ramchar} and \ref{chebswap}.
This generalization only presents
difficulties for Theorem~\ref{Ritt2}, where it was done by Zannier
(cf.\ \cite{Zannier} or \cite[\S 1.4]{Schinzel}).
There are versions of all results in this paper (with the above two exceptions)
over any field $K$ whose
characteristic does not divide the degree of the polynomial $f\in K[X]$ under
consideration; this is because in this situation every decomposition of $f$
over the algebraic closure $\bar{K}$ is equivalent to a decomposition over $K$
\cite[\S 2]{Levi}.
(For instance, see \cite[p.~25]{Schinzel} for a version of Theorem~\ref{Ritt2}
in this situation.)
However, new phenomena occur when the characteristic divides $\deg(f)$:
\begin{itemize}
\item An indecomposable polynomial over $K$ can decompose over $\bar{K}$;
  however, Guralnick and Saxl \cite{GS2} proved this can only happen
  for polynomials
  of degree either a power of the characteristic, or $21$ or $55$.  All
  examples of degree $21$ or $55$ were determined in \cite{GZ}.
  Several families of examples of degree a power of the characteristic were
  given in \cite{BealsZ}, in addition to some partial classification results.
\item Two complete decompositions of $f$ can have distinct lengths
  \cite[p.~98]{DW}; see \cite{BealsZ} for further examples, and \cite{BWZ} for
  classes of indecomposables which cannot occur in any such examples.
\item There are decomposable odd polynomials which are not the composition of
  two nonlinear odd polynomials~\cite{BealsZ}.
\end{itemize}

Several of the results from Section~\ref{subsec:Ritt1} remain valid for
decompositions into monic polynomials over any ring in which the degrees
of the polynomials are units.  We will expand on this point elsewhere.


\subsection{Monodromy groups of indecomposable polynomials}

In light of Theorem~\ref{monRitt1}, it is of interest to determine the
possible monodromy groups of indecomposable polynomials.  This was done
in \cite{Feit,Mueller}, according to which the possible groups are cyclic,
dihedral, alternating, symmetric, and finitely many other groups of
small degree.  The analogous problem in positive characteristic is
much more difficult: a reduced list of group-theoretic possibilities
is given in \cite{GS}, and there are families of indecomposable polynomials
whose monodromy groups are quite different from the groups occurring
in characteristic zero (see \cite{Abhyankar,GZ} and the references therein).
The latter families have remarkable properties: for instance, they include
infinite families of pairs $(f,g)$ of non-equivalent indecomposables such
that $f(X)-g(Y)$ is reducible; and also they include several families
of polynomials $f\in\F_q[X]$ for which the map
$\alpha\mapsto f(\alpha)$ induces a bijection on $\F_{q^k}$ for
infinitely many $k$.


\subsection{Algorithms}

Zippel \cite{Zippel} discovered a deterministic polynomial-time algorithm
for finding a complete decomposition of a rational function $f$ over an
arbitrary field $K$.  In case $f$ is a polynomial of degree not divisible by
the characteristic of $K$, the algorithm in \cite{vzG} (following \cite{KL} and
\cite{Levi}) obtains such a decomposition in essentially linear time.
%
By combining this algorithm with Ritt's results, one can compute
representatives of all equivalence classes of complete decompositions of $f$ by
means of $\mathcal{O}(\deg(f)^3)$ arithmetic operations.
Our results yield a faster algorithm, with optimal complexity.
We will present the details elsewhere.


\vskip.2in
\appendix
\section*{Appendix: Ritt's second theorem}

We now prove Theorem~\ref{Ritt2}.

In Section~\ref{sec: Ritt1} we showed that many problems about polynomial
decomposition reduce to questions about subgroups of the inertia group
at infinity.  However, there is no such reduction for the present question:
besides the ramification at infinity, we need to keep track of the ramification
at finite points as well.  The problem amounts to the determination of all
genus-zero curves of the form $a(X)=c(Y)$ with $a,c$ polynomials of coprime
degrees.  We solve it by comparing contributions to the Riemann--Hurwitz
formula for the covers $\Line\to\Line$ corresponding to each of
$a,b,c,d$, where $a\circ b = c\circ d$.

\begin{proof}[Proof of Theorem~\ref{Ritt2}]
  Pick $a,b,c,d\in\C[X]\setminus\C$ such that $a\circ b = c\circ d$ and
  $\gcd(\deg(a),\deg(c))=\gcd(\deg(b),\deg(d))=1$.  Write $m:=\deg(c)$
  and $n:=\deg(a)$, so $\gcd(m,n)=1$ and also $m=\deg(b)$ and $n=\deg(d)$.
  The result is clear if $\min(m,n)=1$, so assume $m,n>1$.
  Let $x$ be transcendental over $\C$, and put $t=a(b(x))$.

  Let $P_1,\dots,P_k$ be the finite
  branch points of $\Line_x\to\Line_t$.  For any $i$ with $1\le i\le k$,
  let $Q_1^{i},\dots,Q_{q(i)}^{i}$ be the points of $\Line_{b(x)}$ lying
  over $P_i$, and let $\alpha_j^{i}$ be the ramification index of
  $Q_j^{i}/P_i$.  Likewise, let $R_1^{i},\dots,R_{r(i)}^{i}$ be the points of
  $\Line_{d(x)}$ lying over $P_i$, and let $\beta_J^{i}$ be the ramification
  index of $R_J^{i}/P_i$.  Then $n=\sum_{j=1}^{q(i)} \alpha_j^{i}$ and
  $m=\sum_{J=1}^{r(i)} \beta_J^{i}$.
  By Lemma~\ref{abhy}, each point $S$ of $\Line_x$
  lying over both $Q_j^{i}$ and $R_J^{i}$ has ramification index
  $\lcm(\alpha_j^{i},\beta_J^{i})$ in $\Line_x\to\Line_t$, and hence has
  ramification index $\lcm(\alpha_j^{i},\beta_J^{i})/\beta_J^{i}$ in
  $\Line_x\to\Line_{d(x)}$.  Moreover, the number of such points $S$ is
  $\gcd(\alpha_j^{i},\beta_J^{i})$.
  Thus, for each $i$,
  some $\alpha_j^{i}$ or $\beta_J^{i}$ is greater than $1$.
  Let $A_i$ and $B_i$ be
  the multisets $\{\alpha_1^{i},\dots,\alpha_{q(i)}^i\}$ and
  $\{\beta_1^{i},\dots,\beta_{r(i)}^i\}$, respectively.  By applying the
  Riemann--Hurwitz formula to the covers $\Line_{b(x)}\to \Line_t$ and
  $\Line_{x}\to\Line_{d(x)}$, we obtain
  \begin{align*}
  n-1&=\sum_{i=1}^k \sum_{j=1}^{q(i)} (\alpha_j^{i} - 1) =
   \sum_{i=1}^k (n-\abs{A_i}) \quad\text{and}\\
  n-1&=\sum_{i=1}^k \sum_{j=1}^{q(i)} \sum_{J=1}^{r(i)}
   \gcd(\alpha_j^{i},\beta_J^{i})\cdot
   \biggl(\frac{\lcm(\alpha_j^{i},\beta_J^{i})}{\beta_J^{i}}-1\biggr)  \\
    &=\sum_{i=1}^k \sum_{\alpha\in A_i}\sum_{\beta\in B_i}
   (\alpha-\gcd(\alpha,\beta)).
  \end{align*}
  Combined with the analogous expressions for $m-1$, these equations imply
  that $A_i$ and $B_j$ satisfy the hypotheses of Lemma~A below, in which
  we will determine the possibilities for $A_i$ and $B_j$.
  We now determine the corresponding polynomials.  If (C1) holds then some $i$
  has these properties: $P_i$ has a unique preimage in $\Line_{b(x)}$, there is
  a unique $\hat J$ for which $s:=\beta_{\hat J}^i$ is coprime to $n$,
  and further $n\mid \beta_J^i$ for all $J\ne \hat J$.
  By replacing $a$ and $c$ by
  $\ell_1\circ a$ and $\ell_1\circ c$ with $\ell_1$ linear, we may assume
  $P_i=0$.  By replacing $a$ and $b$ by
  $a\circ\ell_2$ and $\ell_2\iter{-1}\circ b$, we may assume $b=0$ is the
  unique root of $a$ (and also that $b=1$ lies above $t=1$), and likewise we
  may assume $d=0$ is the unique root of $c$ having multiplicity $s$.
  Then $a=X^n$ and $c=X^s H(X)^n$ for some $H\in\C[X]$.
  For any $I$, $j$, and $J$, each point of $\Line_x$ lying over
  $Q_j^I$ and $R_J^I$ has ramification index
  $\alpha_j^I/\gcd(\alpha_j^I,\beta_J^I)$ in $\Line_x\to\Line_{d(x)}$.  Since 
  $\alpha_1^i=n$ divides $\beta_J^i$ whenever $Q_J^i\ne 0$, and also
  $\alpha_j^I=1$ if $I\ne i$ (because $n-1=\sum_I (n-\abs{A_I})$
  and $n=\sum_j \alpha_j^I$), it follows that $d=0$ is
  the unique finite branch point of $d$.
  Upon composing $b$ and $d$ on the right with a linear, we may assume $d=X^n$.
  Then $a\circ b = c\circ d$ becomes $b^n = X^{sn} H(X^n)^n$, whence
  $b=\zeta X^s H(X^n)$ with $\zeta^n=1$.  Replacing $H$ by $\zeta H$ puts the
  quadruple $(a,b,c,d)$ in the form (\ref{Ritt2}.2).
  By symmetry, $(c,d,a,b)$ has this form (after composing with linears) if (C2)
  holds.  So assume the $A_i$ and $B_j$ satisfy (C3).
  Then $f=a\circ b$ has just two finite branch points, and every finite
  ramification point has ramification index at most $2$, so Lemma~\ref{ramchar}
  implies that $f$ is dihedral.  Now the result follows from
  Lemma~\ref{chebcheb}.
\end{proof}

\begin{lemA}
  Pick coprime $m,n>1$, and let $A_1,\dots,A_k$ and $B_1,\dots,B_k$
  be multisets of positive integers such that, for each $i$, either $A_i$ or
  $B_i$ (or both) contains an integer greater than $1$.  Suppose further that
  \begin{gather}
  \tag{H1} \label{A1}
    \sum_{\alpha\in A_i} \alpha = n\,\,\, \text{ and }\,\,\,
    \sum_{\beta\in B_j} \beta = m\,\,\, \text{ for each $1\le i\le k$;} \\
  \tag{H2} \label{A2}
    \sum_{i=1}^k (n-\abs{A_i}) = n-1 = \sum_{i=1}^k \sum_{\alpha\in A_i}
    \sum_{\beta\in B_i} (\alpha - \gcd(\alpha,\beta));\, \text{ and} \\
  \tag{H3} \label{A3}
    \sum_{i=1}^k (m-\abs{B_i}) = m-1 =  \sum_{i=1}^k \sum_{\alpha\in A_i}
                    \sum_{\beta\in B_i} (\beta - \gcd(\alpha,\beta)).
  \end{gather}
  Then one of these holds:
  \begin{enumerate}
    \item[(C1)]\label{C1} For some $i$ we have $A_i=\{n\}$, one element of
      $B_i$ is coprime to $n$, and all other elements of $B_i$ are divisible by
      $n$; or
    \item[(C2)]\label{C2} For some $i$ we have $B_i=\{m\}$, one element of
      $A_i$ is coprime to $m$, and all other elements of $A_i$ are divisible by
      $m$; or
    \item[(C3)]\label{C3} $k=2$ and the largest element of
      $A_1\cup A_2\cup B_1\cup B_2$ is $2$.
  \end{enumerate}
\end{lemA}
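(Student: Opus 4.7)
The plan is a double-counting argument. First, setting $g_i := \sum_{\alpha \in A_i, \beta \in B_i}\gcd(\alpha,\beta)$, I would extract from (H2)'s first equality that $\sum_i|A_i| = (k-1)n + 1$, and analogously from (H3) that $\sum_i|B_i| = (k-1)m + 1$. Then, using $\sum_{\alpha \in A_i}\alpha = n$, the right equality of (H2) rearranges to $\sum_i(n|B_i| - g_i) = n - 1$, yielding the key identity
\[
G := \sum_i g_i = (k-1)mn + 1.
\]

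The heart of the argument is to compare $G$ with the full product. Since $\sum_{\alpha \in A_i, \beta \in B_i}\alpha\beta = nm$ for every $i$, the ``deficits'' $\Delta_i := nm - g_i = \sum_{\alpha,\beta}(\alpha\beta - \gcd(\alpha,\beta))$ are nonnegative and satisfy $\sum_i\Delta_i = mn - 1$. From the per-term estimate $\alpha\beta - \gcd(\alpha,\beta) \geq (\alpha-1)\beta$, summing over $\beta \in B_i$ for a fixed $\alpha_0$, I get the crucial bound: if $\alpha_0 \in A_i$ with $\alpha_0 > 1$, then $\Delta_i \geq (\alpha_0 - 1)m$, and symmetrically $\Delta_i \geq (\beta_0 - 1)n$ whenever $\beta_0 \in B_i$ exceeds $1$. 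In particular, under the hypothesis, every $\Delta_i \geq \min(m,n)$, and large elements force large deficits.

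Now I split into cases. First, suppose some $A_{i_0} = \{n\}$; then $\Delta_{i_0} \geq (n-1)m$, leaving only $m - 1$ units of deficit for the $k - 1$ remaining indices. Any $j \neq i_0$ whose $B_j$ contains some $\beta_0 > 1$ contributes $\Delta_j \geq n$; a careful accounting (using $\sum_j|B_j|=(k-1)m + 1$ to track which $B_j$ can carry nontrivial entries) then shows that the elements of $B_{i_0}$ must split as one $s$ coprime to $n$ plus multiples of $n$, which is exactly case (C1). A symmetric argument, starting from some $B_{j_0}=\{m\}$, yields (C2). Otherwise $|A_i|, |B_i| \geq 2$ for every $i$, so all elements are at most $\max(n,m)-1$; any element $\geq 3$ would give $\Delta_i \geq 2\min(m,n)$, and combined with $\Delta_j \geq 1$ for every other $j$, the total budget $mn - 1$ cannot be accommodated except in degenerate configurations. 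A direct check then pins down every element of every $A_i, B_i$ in $\{1,2\}$; letting $a_i, b_i$ count the twos in $A_i, B_i$, the identities above become $\sum a_i = n-1$, $\sum b_i = m-1$, and $\sum a_i b_i = (n-1)(m-1)/2$. Combined with the hypothesis $a_i + b_i \geq 1$ and the sharp constraints $a_i \leq n/2$, $b_i \leq m/2$, a small combinatorial calculation forces $k = 2$, completing case (C3).

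The main obstacle is the rigidity analysis in the first case: while $\Delta_{i_0} \geq (n-1)m$ already leaves only $m-1$ deficit, translating this into the precise structural statement of (C1)---exactly one $\beta \in B_{i_0}$ coprime to $n$, and all other $\beta$'s actually equal to multiples of $n$---requires tracking residual contributions from both $B_{i_0}$ (for instance, a $\beta$ with $1 < \gcd(n,\beta) < n$ contributes extra deficit beyond the crude bound) and from the $(A_j, B_j)$ for $j \neq i_0$. Making this accounting sharp enough to eliminate intermediate configurations is the most delicate part of the argument.
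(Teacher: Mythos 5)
Your preliminary identities check out: from (H2) and (H3) one does get $\sum_i \abs{A_i} = (k-1)n+1$ and $\sum_i \abs{B_i} = (k-1)m+1$, whence $G := \sum_i g_i = (k-1)mn+1$ and $\sum_i \Delta_i = mn-1$, and the per-element bound $\Delta_i \ge (\alpha_0-1)m$ for $\alpha_0 > 1$ in $A_i$ follows from $\gcd(\alpha,\beta) \le \beta$. The case $A_{i_0}=\{n\}$ is also fine (and is in fact the easiest part, not the most delicate): the $i_0$-summand of the right side of (H2) is $\sum_{\beta\in B_{i_0}}(n-\gcd(n,\beta)) \le n-1$, so at most one $\beta$ is not divisible by $n$; since $\sum\beta = m$ is coprime to $n$, exactly one such $\beta$ exists and it is coprime to $n$, giving (C1). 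The derivation of $\sum a_i = n-1$, $\sum b_i = m-1$, $\sum a_i b_i = (n-1)(m-1)/2$, assuming all elements lie in $\{1,2\}$, is also correct.

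The genuine gap is the step reducing case (c) to elements in $\{1,2\}$. You assert that an element $\ge 3$ gives $\Delta_i \ge 2\min(m,n)$ and that together with $\Delta_j \ge \min(m,n)$ for the other indices the budget $mn-1$ ``cannot be accommodated'' — but it plainly can. Take $m<n$: the constraint becomes $2m+(k-1)m \le mn-1$, i.e.\ $k+1<n$, which is no contradiction at all; even an element as large as $n/2$ only forces $\Delta_i \ge (n/2-1)m$, still comfortably under budget. The estimate $\alpha\beta-\gcd(\alpha,\beta)\ge(\alpha-1)\beta$, which uses only $\gcd(\alpha,\beta)\le\beta$, is far too coarse to pin down the fine structure. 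What the paper actually does is split on whether the per-index version of (H2), condition (P1): $n-\abs{A_i}=\sum_{\alpha,\beta}(\alpha-\gcd(\alpha,\beta))$, holds for every $i$. When it does, a sharper analysis (using $\gcd(\alpha,\beta)\le\alpha$ with equality iff $\alpha\mid\beta$) shows each $B_i$ has at most one copy of $1$ and every other element divisible by all of $A_i$, forcing $\abs{B_i}\le(m+1)/2$, and a counting argument then yields $k=2$ with $B_i=\{1,2,\dots,2\}$ and $A_i$ all at most $2$. When (P1) fails for some $i$, the paper introduces the signed defects $Z(i)=\abs{A_i}-n+\sum(\alpha-\gcd)$ and $Y(i)$ (which sum to $0$) and derives a contradiction from $Z(1)<0$, $Y(I)<0$ via a fairly intricate chain of inequalities. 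Your proposal contains no substitute for this second branch, and without it the $\{1,2\}$ reduction is merely asserted.
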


\begin{proof}
  If $\abs{A_1}=1$ then \eqref{A1} implies $A_1=\{n\}$; thus, by \eqref{A2}, at
  most one element $\hat\beta$ of $B_1$ is not divisible by $n$.  Since
  $n$ is coprime to $m=\sum_{\beta\in B_1} \beta$, it follows that $n$ is
  coprime to $\hat\beta$, so (C1) holds.  Similarly, if $\abs{B_1}=1$ then (C2)
  holds.  Henceforth we assume $\abs{A_i},\abs{B_i}>1$ for each $i$; by
  \eqref{A2}, we have $\abs{A_i}<n$ for at least two values $i$ (so $k>1$), and
  also $\abs{B_j}<m$ for at least two values $j$.  We may assume
  $\abs{A_1},\abs{A_2}<n$.

  Now suppose that, for each $i$ with $1\le i\le k$, we have
  \begin{equation}
  \label{rh}\tag{P1}
    n-\abs{A_i} = \sum_{\alpha\in A_i}\sum_{\beta\in B_i}
     (\alpha-\gcd(\alpha,\beta)).
  \end{equation}
  We first show that $\abs{B_i}\le (m+1)/2$ for $i\in\{1,2\}$.  If
  $1\notin B_i$ then \eqref{A1} implies the stronger inequality
  $\abs{B_i}\le m/2$, so assume $1\in B_i$.  Since $\abs{A_i}<n$, by \eqref{rh}
  and \eqref{A1} we see that $B_i$ contains precisely one copy of $1$, and
  every other element of $B_i$ is divisible by every element of $A_i$.  But
  some element of $A_i$ is at least $2$ (since $\abs{A_i}<n$), so all but one
  element of $B_i$ is at least $2$, whence $\abs{B_i}\le (m+1)/2$ with equality
  just when $B_i=\{1,2,2,\dots,2\}$ and every element of $A_i$ is at most $2$.

  Now $m-1=\sum_{i=1}^k (m-\abs{B_i}) \ge (m-\abs{B_1}) + (m-\abs{B_2}) \ge
  m-1$, so for $i\in\{1,2\}$ we have $\abs{B_i}=(m+1)/2$, whence
  $B_i=\{1,2,2,\dots,2\}$ and every element of $A_i$ is at most $2$.
  Moreover, if $k>2$ then $\abs{B_3}=m$, so $B_3=\{1,1,\dots,1\}$; thus
  \eqref{rh} says that $n-\abs{A_3}=\sum_{\alpha\in A_3}m(\alpha-1)=
  m(n-\abs{A_3})$, so $\abs{A_3}=n$ and $A_3=\{1,1,\dots,1\}$, contradiction.
  This gives (C3), and concludes the proof if (\ref{rh}) holds for every $i$.

  For each $i$ with $1\le i\le k$, and each $\hat\alpha\in A_i$ and
  $\hat\beta\in B_i$, define
  \begin{gather*}
    z(i,\hat\alpha) := 1-\hat\alpha + \sum_{\beta\in B_i}
      (\hat\alpha - \gcd(\hat\alpha,\beta)) \\
    y(i,\hat\beta) := 1-\hat\beta + \sum_{\alpha\in A_i}
      (\hat\beta - \gcd(\alpha,\hat\beta)) \\
    Z(i) := \sum_{\alpha\in A_i} z(i,\alpha) = \abs{A_i}-n +
      \sum_{\alpha\in A_i}\sum_{\beta\in B_i} (\alpha-\gcd(\alpha,\beta)) \\
    Y(i) := \sum_{\beta\in B_i} y(i,\beta) = \abs{B_i} - m +
      \sum_{\alpha\in A_i}\sum_{\beta\in B_i} (\beta-\gcd(\alpha,\beta)).
  \end{gather*}
  Thus $\sum_{i=1}^k Z(i)=0$, and we have already proved the result if every
  $Z(i)=0$, so we may assume $Z(1)<0$ (because $Z(i)=0$ if $\abs{A_i}=n$).
  Likewise we may~assume $Y(I)<0$ for some $I$.
  We will deduce a contradiction.  We compute
  \begin{align}
    \sum_{i=1}^k \Bigl(mn - \sum_{\alpha\in A_i}\sum_{\beta\in B_i}
      \gcd(\alpha,\beta)\Bigr) &=
      \sum_{i=1}^k \sum_{\alpha\in A_i}\sum_{\beta\in B_i}
      (\alpha\beta - \gcd(\alpha,\beta)) \notag\\
    &= m-1 + \sum_{i=1}^k \sum_{\alpha\in A_i}\sum_{\beta\in B_i}
      (\alpha\beta - \beta) \notag\\
    &= m-1 + \sum_{i=1}^k m(n-\abs{A_i}) \tag{P2} \label{disp} \\
    &= m-1 + m(n-1) =mn-1.\notag 
  \end{align}
  (In the setting of Theorem~\ref{Ritt2}, this is Riemann--Hurwitz
  for $\Line_x\to\Line_t$).
  If $Z(i)<0$ then $1\notin B_i$, so $\abs{B_i}\le m/2$ and thus
  $\sum_{\alpha\in A_i}\sum_{\beta\in B_i} \gcd(\alpha,\beta) \le \abs{B_i}
  \sum_{\alpha\in A_i}\alpha = \abs{B_i} n \le nm/2$; similarly, the same
  conclusion holds if $Y(i)<0$.  But \eqref{disp} implies there is at most one
  $i$ satisfying this conclusion, so $I=1$ and $Y(i),Z(i)\ge 0$ for $i>1$.
  Since $\sum_{i=1}^k Z(i)=0$, we have $Z(i)\le -Z(1)$ for $i>1$, and likewise
  $Y(i)\le -Y(1)$.  Since $Z(1)<0$, also $z(1,\alpha)<0$ for some
  $\alpha\in A_1$.  Thus $\alpha$ is not coprime to any element of $B_1$, and
  $\alpha$ divides all but at most one element of $B_1$, so there exists $D>1$
  dividing both $\alpha$ and every element of $B_1$.  Then
  $\sum_{\beta\in B_1} \beta = m$ is divisible by $D$, so $D$ is coprime to $n$
  and thus some $\alpha'\in A_1$ is not divisible by $D$.  For $\beta\in B_1$
  we have $\gcd(\alpha',\beta) \le \beta/2$, so
  $\sum_{\beta\in B_1} (\beta - \gcd(\alpha',\beta)) \ge m/2$.  Also
  $\abs{B_1}\le m/D\le m/2$, so $m-\abs{B_1}=m/2+\delta$ with $\delta\ge 0$,
  and similarly $n-\abs{A_1}=n/2+\gamma$ with $\gamma\ge 0$.  Thus
  $Y(1) \ge \abs{B_1}-m + \sum_{\beta\in B_1} (\beta-\gcd(\alpha',\beta)) \ge
  -\delta$, so $Y(i)\le \delta$ for any $i>1$.  For any $i>1$ we have
  $n-\abs{A_i}\le n-1-(n/2+\gamma)$ (by \eqref{A2}), so
  $\abs{A_i}\ge n/2+\gamma+1$, whence the number of $1$'s in $A_i$ is at least
  $\sum_{\alpha\in A_i} (2-\alpha) = 2\abs{A_i}-n\ge 2(\gamma+1)$.  Thus
  $Y(i) \ge \abs{B_i}-m +2(\gamma+1)(m-\abs{B_i})$, so
  $\delta \ge (2\gamma+1)(m-\abs{B_i})$.  Since $\abs{B_i}<m$ for some $i>1$,
  we obtain $\delta\ge 2\gamma+1$.  Similarly,
  $\gamma\ge 2\delta+1\ge 4\gamma+3$, which is impossible since $\gamma\ge 0$.
\end{proof}

\begin{Remarkapp}
  The proof of Theorem~\ref{Ritt2} becomes simpler if we assume in addition
  that $a$ and $b$ are indecomposable, or more generally that neither $a$ nor
  $c$ has the form $\ell \circ X^e \circ f$ with $\ell$ linear, $e>1$, and $f$
  not a power of a linear polynomial.  The latter condition is equivalent to
  requiring that, for each $i$, if $\abs{A_i}>1$ then the elements of $A_i$
  have no common factor exceeding $1$; and similarly for $B_i$.  If
  $\abs{A_i}=1$ then the beginning of the proof of the Lemma shows (C1) holds.
  So assume $\abs{A_i}>1$ for every $i$, and similarly $\abs{B_i}>1$.
  Since the elements of $A_i$ have gcd $1$, for $\beta\in B_i$ we have
  $y(i,\beta) \ge 0$, with equality just when $\beta$ is coprime to an element
  of $A_i$ and divides all other elements of $A_i$.  Thus $Y(i) \ge 0$; since
  $\sum_{i=1}^k Y(i)=0$, it follows that $Y(i)=0$ for every $i$, so
  $y(i,\beta)=0$ for every $i$ and $\beta$, whence the above equality condition
  holds.  In particular, if we pick $i$ such that $\abs{B_i}<m$, then $B_i$
  contains an element $\beta>1$, so $\abs{A_i}\le (n+1)/2$.  Since
  $n-1=\sum_i (n-\abs{A_i})$, there are at most two values $i$ for which
  $\abs{B_i}<m$, so there are exactly two and each satisfies
  $\abs{A_i}=(n+1)/2$, whence $A_i=\{1,2,\dots,2\}$ and further the largest
  element of $B_i$ is $2$.  For any other $i$, \eqref{A2} implies
  $\abs{A_i}=n$, so $A_i$ and $B_i$ consist solely of $1$'s; this contradicts
  our hypothesis, so $k=2$ and thus (C3) holds.  This proves Lemma~A, and the
  theorem follows as above.
\end{Remarkapp}

\begin{Remarkapp}
  Our proof of Theorem~\ref{Ritt2} is a simplified and rearranged version
  of Ritt's proof.  Ritt's proof looks rather different, since he worked in
  terms of the monodromy group of the Riemann surface for $f(x)-z$, and gave a
  cumbersome description of elements of this group via their action on
  branches.  This is logically equivalent to what we did
  above, but it was viewed by some as being unduly difficult.  Consequently,
  several authors rewrote Ritt's proof in other languages, usually under the
  simplifying assumption that $a,b,c,d$ are indecomposable.  In this special
  case, Ritt's proof has
  been rewritten in terms of polynomial arithmetic
  (\cite{Levi}, \cite[\S 2 of Ch.~4]{LN} and \cite{Binder}),
  valuation theory \cite{DW}, and group theory \cite{Mueller}.  
  Ritt's proof of the full Theorem~\ref{Ritt2} has been translated into the
  language of polynomial arithmetic \cite[\S 5]{Schinzelold}, as well as into
  a language closer to ours \cite[Thm.~6.1]{BT}.  There is also a
  valuation-theoretic version of Ritt's proof \cite{Tortrat}, including a
  different proof of Lemma~A.  Finally, as in the previous remark, it is
  easier to prove Theorem~\ref{Ritt2} when neither $a-\alpha$ nor $b-\alpha$ is
  a nontrivial power of a nonlinear polynomial for any $\alpha\in\C$; one can
  deduce the full result from
  this by a different kind of argument \cite{Zannier} (see also
  \cite[\S 1.4]{Schinzel} or \cite[\S 9]{P2}).  A flawed attempt at such an
  approach is \cite[Thm.~2]{Fried-Ritt}.
  Our proof is arranged quite differently from previous ones, and we hope this
  makes it more understandable.
\end{Remarkapp}



\end{document}